\numberwithin{equation}{section} 
\newtheorem{theorem}{Theorem}[section]
\newtheorem{lemma}[theorem]{Lemma}
\newtheorem{corollary}[theorem]{Corollary}
\theoremstyle{definition}
\newtheorem{remark}[theorem]{Remark}
\newtheorem{definition}[theorem]{Definition}
\newtheorem{assumption}[theorem]{Assumption}
\newtheorem{example}[theorem]{Example}
\newcommand{\ph}{\varphi}
\newcommand{\R}{\mathbb{R}}
\newcommand{\N}{\mathbb{N}}
\newcommand{\Z}{\mathbb{Z}}
\newcommand{\prob}{\mathbb{P}}
\newcommand{\cP}{\mathcal{P}}
\newcommand{\cT}{\mathcal{T}}
\newcommand{\polOrder}{l}
\newcommand{\localVEMSpace}{V_{h,\polOrder}^K}
\newcommand{\vemSpace}{V_{h,\polOrder}}
\newcommand{\enlargedVemSpace}{\widetilde{V}_{h,\polOrder}}
\newcommand{\HTwoNCSpace}{H^{2,nc}_{\polOrder}(\cT_h)}
\newcommand{\valueProj}{\Pi^K_0}
\newcommand{\gradProj}{\Pi^K_1}
\newcommand{\hessProj}{\Pi^K_2}
\newcommand{\edgeProj}{\Pi^e_0}
\newcommand{\edgeNormalProj}{\Pi^{e}_1}
\newcommand{\vertexValueDofs}{d_0^v}
\newcommand{\vertexDerivDofs}{d_1^v}
\newcommand{\edgeValueDofs}{d_0^e}
\newcommand{\edgeNormalDofs}{d_1^e}
\newcommand{\innerDofs}{d_0^i}
\newcommand{\Confinterpolation}{\Pi^{\polOrder-1}}
\newcommand{\GeneralInterpolation}{\widetilde{\Pi}}
\newcommand{\Conenonconf}{C^1 \text{-nc}}
\newcommand{\Conemod}{C^1 \text{-mod}}
\newcommand{\CzeroconfSpace}{C^1 \text{-} C^0}
\newcommand{\originalDofs}{\Lambda^K_M}
\newcommand{\vertiii}[1]{{\left\vert\kern-0.25ex\left\vert\kern-0.25ex\left\vert #1 
    \right\vert\kern-0.25ex\right\vert\kern-0.25ex\right\vert}}
\newcommand{\red}[1]{\textcolor{black}{#1}}
\newcommand{\strike}[1]{\ignorespaces}
\providecommand{\keywords}[1]
{
  \small	
  \textbf{Keywords:} #1
}
\title{\red{Robust nonconforming virtual element methods for general fourth order problems with varying coefficients}}
\author{Andreas Dedner\thanks{Corresponding author. Department of Mathematics, University of Warwick, Coventry, CV4 7AL, UK. Email: a.s.dedner@warwick.ac.uk} 
\ and 
Alice Hodson  
}
\date{}
\begin{document}
\maketitle

\setboolean{thesis}{false}
\setboolean{oldResultsDisplay}{false}

\begin{abstract}
    We present a class of nonconforming virtual element methods for general fourth order partial differential equations in two dimensions. We develop a generic approach for constructing the necessary projection operators and virtual element spaces. Optimal error estimates in the energy norm are provided for general linear fourth order problems with varying coefficients. We also discuss fourth order perturbation problems and present a novel nonconforming scheme which is uniformly convergent with respect to the perturbation parameter without requiring an enlargement of the space. Numerical tests are carried out to verify the theoretical results. We conclude with a brief discussion on how our approach can easily be applied to nonlinear fourth order problems.
\end{abstract} 
\hspace{10pt}

\keywords{virtual element method; fourth order problems; nonconforming; perturbation problem; DUNE.}
\section{Introduction}
In recent years the discretization of partial differential equations via the virtual element method (VEM) has seen a rapid increase.
Introduced in \cite{beirao_da_veiga_basic_2013} VEM began as an extension and generalization of both finite element and mimetic finite difference methods as discussed in \cite{da2014mimetic}.
In \cite{beirao_da_veiga_basic_2013} the appropriate local and global VEM spaces are constructed and the approximation properties analysed for the Laplace equation.
Another discretization of the Laplace problem was suggested in \cite{ahmad_equivalent_2013} while a nonconforming approach can be found in \cite{de_dios_nonconforming_2014}. An extension to general, nonlinear second order elliptic PDEs for both conforming and nonconforming spaces is discussed in \cite{cangiani_conforming_2015}.  
Similarly, another approach is taken in \cite{beirao2016virtual} for diffusion-convection-reaction problems. 

The versatility of VEM has been showcased through the wide variety of problems it has been applied to over recent years. This has led to the construction of $H$(div) and $H$(curl)-conforming virtual element spaces in \cite{hdiv}, conforming virtual elements for polyharmonic problems in \cite{antonietti_conforming_2018}, and the construction of methods for Stokes flow in \cite{da_veiga_divergence_2015, cangiani_non-conforming_2016, dassi_bricks_2018}, to name but a few.
Especially the ease \strike{in} \red{with} which VEM spaces can be constructed to enforce desirable properties of the discrete functions even on general polygonal meshes makes the approach very interesting for a wide range of problems. 
An example of this is the construction of divergence free vector spaces in \cite{da_veiga_divergence_2015}. A further example is the construction of discrete spaces with higher order continuity conditions. The construction of even a lowest order $C^1$ conforming space is not straightforward within the standard finite element setting and higher order nonconforming spaces suitable for fourth order problems are also not readily available. Consequently, many software packages provide a large number of spaces for second order problems but often only provide the lowest order Morley element \cite{morley1967triangular} for discretizing fourth order problems without requiring the use of splitting methods.

To construct conforming elements for fourth order problems, $C^1$ continuity is required which makes the methods highly complex. 
It is known that using traditional finite element methods, polynomials of at least degree five are needed to construct $C^1$ approximations which are piecewise polynomials. 
In contrast, it is shown in \cite{brezzi_virtual_2013} that the virtual element construction of $C^1$ approximations to fourth order plate bending problems is much simpler and arguably more elegant. 
Additionally, the conforming virtual element method for polyharmonic problems, $\Delta^p u = f $ for $p \geq 1$,  has been addressed in \cite{antonietti_conforming_2018} where the global VEM space consists of $C^{p-1}$ functions.
As well as this, the study of linear elliptic fourth order problems in three dimensions is considered in the conforming case in \cite{da_veiga_c1_2019}. Another example of $C^1$ conforming elements can be seen in the application of the lowest order VEM space to the Cahn-Hilliard equation, investigated in \cite{antonietti_c1_2016}. Further studies of the application of virtual elements to the Cahn-Hilliard equation can be found in \cite{liu2019virtual,liu2020fully}. 

In this work we focus on studying nonconforming virtual element methods but for a wide range of problems. \strike{including nonlinear models.} 
Although we focus on nonconforming VEM for fourth order problems, we highlight that due to the general framework we present, only minor modifications are needed to also include the study of $C^1$ conforming elements for these problems.
Existing works which study nonconforming fourth order problems include the nonconforming approximation of the biharmonic plate bending problem, which is considered in \cite{antonietti_fully_2018, zhao_nonconforming_2016, zhao_morley-type_2018}. 
A mixture of spaces have been suggested, some fully nonconforming \cite{antonietti_fully_2018,zhao_morley-type_2018} and others which include some level of continuity \cite{zhao_nonconforming_2016} though not the full $C^1$ continuity you would see in a fully conforming space. 
More recently, we see a $C^0$ conforming approach to fourth order perturbation problems being considered in \cite{zhang_nonconforming_2020}.
\strike{To our knowledge, the application of higher order VEM also to more general nonconstant coefficients and nonlinear fourth order problems is not available at the time of writing.}
\red{There are other general approaches that can be employed such as hybrid high-order (HHO) methods \cite{di2015hybrid} which, like VEM, can easily handle polyhedral meshes. The connection between VEM and HHO has been analysed and discussed for the Poisson problem in \cite{lemaire2019bridging}. Hybrid high-order methods have also been extended to fourth order problems in \cite{bonaldi2017hybrid} where a novel HHO method for the Kirchhoff-Love plate bending problem is presented. To our knowledge, the application of both higher order VEM and HHO methods to more general nonconstant coefficients and nonlinear fourth order problems is not available at the time of writing.  }

Arguably the most important ingredient of VEM is the construction of projection operators. In the available literature on fourth order problems, projection operators are constructed based on the underlying variational problem.
The main idea of this approach is to construct only one projection which depends on the local contribution to the bilinear form. 
In \cite{cangiani_conforming_2015,ahmad_equivalent_2013} a different approach was taken for discretizing second order problems, which makes it straightforward to apply the method to nonlinear models. In this paper we generalize this approach and demonstrate how it can be applied to a wide range of fourth order problems. A major advantage of this approach is that it can be included more easily into existing software frameworks. A central building block for implementing Galerkin type schemes is the evaluation of nodal basis functions and their derivatives at given quadrature points. To extend this to our VEM setting, these methods have to be replaced with the evaluation of projection operators defined on each element. We implemented this approach within the DUNE \cite{dunegridpaperII,dedner2010generic} software framework, requiring little change to the existing code base. From the user perspective switching between a finite element to a virtual element discretization is seamless
especially within the available Python frontend \cite{dedner_dune_2018,dednerPythonBindings2020}.

    \red{ We refer throughout this paper to two well known VEM spaces, the nonconforming space discussed in both \cite{antonietti_fully_2018,zhao_morley-type_2018} and the $C^0$ conforming space discussed in \cite{zhao_nonconforming_2016,zhang_nonconforming_2020} by demonstrating how they fit into our generalized framework.}
    \red{The main contributions of this paper are detailed as follows.}

    \red{ (i) We present a general approach for constructing nonconforming VEMs for any order of accuracy, for solving a general fourth order PDE problem with nonconstant coefficients. We extend the work in \cite{cangiani_conforming_2015} on second order elliptic problems and using the same techniques, we prove optimal error estimates in the energy norm (Section \ref{section: error analysis}). }
    
    \red{ (ii) In Section~\ref{section: VEM spaces}, we introduce a general approach for constructing VEM projection operators based on constraint linear least square formulations that ensures they are fully computable from the available degrees of freedom. }
    
    \red{ (iii) The fourth order perturbation problem is studied in Section~\ref{section: perturbation problem}. 
    We discuss how two standard spaces mentioned previously fit into our framework although note that our approach for constructing the projection operators differs from the approach taken in \cite{antonietti_fully_2018,zhao_morley-type_2018,zhang_nonconforming_2020}.
    We furthermore present a novel fully nonconforming scheme which unlike modifications seen in \cite{zhang_nonconforming_2020}, does not require an enlargement of the standard nonconforming VEM space.}

    \red{ (iv) Lastly, we carry out numerical experiments in Section~\ref{section: numerical testing} to confirm the a priori error analysis. We conclude by showing how our construction of projection operators allows us to straightforwardly solve complex nonlinear fourth order problems. }

\section{The continuous problem}\label{section: cts problem}
Throughout this paper, we adopt the standard notation for Sobolev spaces $H^s(\mathcal{D})$ for nonnegative integers $s$, and for a domain $\mathcal{D}$. We denote the norm and seminorm by $\| \cdot \|_{s,\mathcal{D}}$ and $|\cdot |_{s,\mathcal{D}}$ respectively. If $\mathcal{D} = \Omega$ then the subscript shall be omitted. The notation $(\cdot,\cdot)_{\mathcal{D}}$ will be used to denote the $L^2 (\mathcal{D})$ inner product. 
For a nonnegative integer $\polOrder$, let $\prob_{\polOrder}(\mathcal{D})$ denote the set of all polynomials up to degree $\polOrder$ over $\mathcal{D}$. We use the convention that $\prob_{-1}(\mathcal{D})=\{ 0\}.$
We denote the standard $L^2 (\mathcal{D})$ orthogonal projection onto the polynomial space $\prob_{\polOrder}(\mathcal{D})$ by $\cP^{\polOrder}_{\mathcal{D}}$.  
The tensor of all derivatives of a given order $|\mu|$ is denoted with $D^{|\mu|}\ph$. 
Let $\partial_n \ph = \nabla \ph \cdot n$ denote the normal derivative of a function $\ph$ over $\partial \mathcal{D}$ and let $\partial_s \ph = \nabla \ph \cdot \tau $ denote its tangential derivative where we use $\tau$ to denote a tangential vector.

Consider a general linear fourth order problem defined on a polygonal domain $\Omega \subset \R^2$
described by a bilinear form
\begin{align}
    a(u,v) := \int_{\Omega} \kappa(x) D^2 u : D^2 v \ \mathrm{d} x + \int_{\Omega} \beta(x) Du \cdot Dv \, \mathrm{d}x + \int_{\Omega} \gamma(x) u v \, \mathrm{d}x 
    \label{eqn: bilinear form cts}
\end{align}
for $u,v \in H^2_0(\Omega)$ with
    $
    H^2_0(\Omega) = \{ v \in H^2(\Omega) : v = \partial_n v = 0  \text{ on } \partial \Omega \}.
    $

We make the minimal assumptions that $\kappa, \beta, \gamma \in L^{\infty}(\Omega)$. In later sections we impose further conditions on the coefficients. For now we assume
that the coefficients satisfy $\kappa \geq \kappa_0 > 0$, for a constant $\kappa_0$ and $\beta, \gamma \geq 0$.
\red{We define ${\beta_0 = \min{\beta(x)}}$, and $\gamma_0 = \min{\gamma(x)}$.}
Note that we could also consider an even more general setting, e.g., take $\beta \in L^{\infty}(\Omega)^{2 \times2}$ as in \cite{cangiani_conforming_2015}. The results in this paper can be easily extended to cover this case but to keep the presentation simple we only consider scalar coefficients.

The variational problem for a given $f \in L^{2}(\Omega)$
reads as follows: find $u \in H^2_0(\Omega)$ such that
\begin{align}
    a(u,v) = (f,v) \quad \forall \, v \in H^2_0(\Omega).\label{eqn: cts problem}
\end{align}

Since the bilinear form is symmetric, we can define an energy norm $\vertiii{\cdot} $ by 
$\vertiii{v}^2 = a(v,v).$ 
It follows easily that the bilinear form is coercive and continuous with respect to the energy norm,
\begin{align*}
    a(u,v) &\leq \vertiii{u} \vertiii{v}, \quad \text{ for all } u,v \in H^2_0(\Omega), \\
    a(v,v) &\geq \vertiii{v}^2, \quad \text{ for all } v \in H^2_0(\Omega). 
\end{align*} 
Hence it follows from the Lax-Milgram Lemma that \eqref{eqn: cts problem} has a unique solution. 

\begin{remark}
  Assuming that the solution $u$ to \eqref{eqn: cts problem} is smooth enough, we can derive the corresponding strong form of the PDE
  \begin{equation}\label{eqn: strong form}
      \begin{split}
          \sum_{i,j=1}^2 \partial_{ij} (\kappa \partial_{ij} u) - \sum_{i=1}^2 \partial_i (\beta \partial_i u) + \gamma u &= f \quad \text{ in } \Omega, \\
          u = \partial_n u &= 0 \quad \text{ on } \partial \Omega.  
      \end{split}
  \end{equation}

Note that if we were considering constant coefficients, taking $\kappa,\beta,\gamma \in \R$, as in \cite{da_veiga_c1_2019}, then the strong form would reduce to the PDE studied there
    \begin{align*}
        \kappa \Delta^2 u - \beta \Delta u + \gamma u &= f \quad \text{ in } \Omega, \\
        u = \partial_n u &= 0 \quad \text{ on } \partial \Omega.
    \end{align*}
\end{remark}
\section{The discrete problem and an abstract convergence result}\label{sec: discrete problem}

In this section we provide some general ingredients needed for the discretization of our problem and present a Strang-type abstract error estimate. Let $\cT_h$ denote a tessellation of the computational domain $\Omega$ and denote the set of all edges in $\cT_h$ by $\mathcal{E}_h$. We split this set into boundary edges, $\mathcal{E}_h^{\text{bdry}} := \{ e \in \mathcal{E}_h : e \subset \partial \Omega \}$ and internal edges $\mathcal{E}_h^{\text{int}} := \mathcal{E}_h \backslash \mathcal{E}_h^{bdry}$. 
Similarly, denote the set of vertices in $\cT_h$ by $\mathcal{V}_h = \mathcal{V}_h^{\text{int}} \cup \mathcal{V}_h^{\text{bdy}}$, which again is made up of interior and boundary vertices.
\newcounter{assumptions1}

For an integer $s>0$, define the \emph{broken Sobolev space} $H^s(\mathcal{T}_h)$ by 
\begin{align*}
    H^s(\mathcal{T}_h) := \{  v \in L^2(\Omega) : v|_K \in H^s(K), \ \forall \, K \in \mathcal{T}_h  \},
\end{align*} 
and on this space define the \emph{broken $H^s$ seminorm}
\begin{align*}
 | v_h |^2_{s,h} = \sum_{K \in \cT_h} |v_h|^2_{s,K}.
\end{align*}
For a function $v \in H^2(\cT_h)$ we define the jump operator $[ \cdot ]$ across an edge $e \in \mathcal{E}_h$ as follows. For an internal edge, $e \in \mathcal{E}_h^{\text{int}}$, define $[ v ] := v^+ - v^- $ where $v^{\pm}$ denotes the trace of $v|_{K^{\pm}}$ where $e \subset \partial K^{+} \cap \partial K^{-}$. For boundary edges, $e \in \mathcal{E}_h^{\text{bdry}}$, let $[v] := v|_e$.
We denote with $\prob_{k}(K)$ the space of polynomials over a grid element $K$ and define the piecewise polynomial space $\prob_{k} (\cT_{h})$ for any $k \in \N$ with
    \begin{align*}
        \prob_{k} (\cT_{h}) := \{ p \in L^2(\Omega) :  p|_K \in \prob_{k}(K),  \ \forall K \in \cT_{h} \}.
    \end{align*}
 
We now make the following basic assumptions. In particular, we stress that throughout the paper the polynomial order $\polOrder$ is fixed. 

\begin{assumption}\label{assumptions: discrete assumptions} 
    Assume the following holds for any fixed $h > 0$ and for a fixed $\polOrder \geq 2$.
    \begin{enumerate}[label=(\textit{A}\arabic*)]
        \item The mesh $\cT_h$ consists only of \textit{simple polygons}. \red{A simple polygon refers to the criteria that the boundary of each element must not intersect itself and is made up of a finite number of straight line segments.  } \label{mesh}
        \item The finite dimensional function space $\vemSpace$ satisfies $\prob_{\polOrder} (\cT_{h})\subset\vemSpace$ \strike{for some fixed $\polOrder \geq 2$} and $\vemSpace \subset \HTwoNCSpace$. We define the nonconforming space $\HTwoNCSpace \subset H^{2}(\cT_h)$ as 
        \begin{align*}
            \HTwoNCSpace  := \Big\{ v \in & \ H^2(\cT_h) : v \text{ continuous at internal vertices, } v(v^i) = 0  \quad \forall v^i \in \mathcal{V}_h^{\text{bdry}}, 
                \\
                &\int_e [ \, \partial_n v \, ]p \, \mathrm{d} s = 0\ \forall \, p \in \prob_{\polOrder-2}(e),\ 
                \int_e [\, v \, ]p \, \mathrm{d}s = 0 \ \forall \, p \in \prob_{\polOrder-3}(e),\ \forall \, e \in \mathcal{E}_h  \Big\}.
        \end{align*} 
        \label{assumption: A2}
        \item There exists $f_h \in \vemSpace^{'}$, which approximates the right hand side of our variational problem \eqref{eqn: cts problem}.  
        \item There exists a discrete bilinear form $a_h : \vemSpace \times \vemSpace \rightarrow \R$, such that for any $u_h, v_h \in \vemSpace$,  
        \begin{align*}
            a_h(u_h,v_h) = \sum_{K \in \cT_h} a_h^K(u_h,v_h).
        \end{align*}
        The bilinear form $a_h^K : \vemSpace |_K \times \vemSpace |_K \rightarrow \R$ is the restriction of $a_h$ to an element $K$. We denote the restriction of the VEM space $\vemSpace$ to an element $K$ by 
        $\localVEMSpace := \vemSpace|_K$.
        \item \emph{Stability property:} assume that there exists two constants $\alpha_*, \alpha^*$ such that 
        \begin{align*}
            \alpha_* a^K(v_h,v_h) \leq a_h^K(v_h,v_h) \leq \alpha^* a^K(v_h,v_h)
        \end{align*} 
        for all $v_h \in \vemSpace^K$.
        \label{assumption: stability property}
        \setcounter{assumptions1}{\value{enumi}}
    \end{enumerate}
\end{assumption} 
The criteria in the stability property \ref{assumption: stability property} is required to show that the discrete bilinear form is coercive and continuous. 

\begin{lemma}\label{lemma: discrete norms are norms}
    The broken Sobolev norm $|\cdot |_{2,h}$ is a norm on the spaces $H^2_0(\Omega)$ and $\HTwoNCSpace$. 
    Define the element wise discrete energy norm $\vertiii{w_h}^2_h = \sum_{K \in \cT_h} \vertiii{w_h}_K^2 $ 
    for functions $w_h \in \HTwoNCSpace$,
    where the element wise contributions are given by
    \begin{align*}
        \vertiii{w_h}^2_K = (\kappa D^2 w_h,D^2 w_h)_K + (\beta D w_h, D w_h)_K + (\gamma w_h,w_h)_K.
    \end{align*}
    Then, we have that $\vertiii{\cdot}_h$ is a norm on $\HTwoNCSpace$. Therefore, under Assumption \ref{assumption: A2}, it follows that both $|\cdot |_{2,h}$  and $\vertiii{\cdot}_h$ are a norm on $\vemSpace$.
\end{lemma}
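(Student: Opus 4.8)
The plan is to treat $|\cdot|_{2,h}$ and $\vertiii{\cdot}_h$ as seminorms first and then reduce everything to a single \emph{definiteness} statement. Both quantities are clearly nonnegative, absolutely homogeneous and obey the triangle inequality: $|\cdot|_{2,h}$ is an $\ell^2$-assembly of the genuine $L^2$ seminorms $|\cdot|_{2,K}$, while $\vertiii{\cdot}_h$ is induced by the symmetric bilinear form $\sum_K\big[(\kappa D^2\cdot,D^2\cdot)_K+(\beta D\cdot,D\cdot)_K+(\gamma\cdot,\cdot)_K\big]$, which is positive semidefinite because $\kappa\ge\kappa_0>0$ and $\beta,\gamma\ge 0$. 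Hence in each case only definiteness (``zero seminorm $\Rightarrow$ zero function'') needs proof. The two norms and the three spaces are then linked by two elementary reductions: since $\vertiii{w_h}_K^2\ge\kappa_0\,|w_h|_{2,K}^2$ for every $K$, summation gives $\vertiii{w_h}_h^2\ge\kappa_0\,|w_h|_{2,h}^2$, so definiteness of $|\cdot|_{2,h}$ on $\HTwoNCSpace$ yields definiteness of $\vertiii{\cdot}_h$ there; and because Assumption \ref{assumption: A2} gives $\vemSpace\subset\HTwoNCSpace$, a norm on $\HTwoNCSpace$ restricts to a norm on $\vemSpace$. Thus it suffices to prove that $|\cdot|_{2,h}$ is definite on $H^2_0(\Omega)$ and on $\HTwoNCSpace$.

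For $v\in H^2_0(\Omega)$ this is immediate: $v\in H^2(\Omega)$ gives $|v|_{2,h}=|v|_{2,\Omega}$, so $|v|_{2,h}=0$ forces $D^2v=0$ a.e.\ on the connected domain $\Omega$, whence $v$ is a single affine function; the trace condition $v=0$ on $\partial\Omega$ then forces $v\equiv 0$, since an affine function vanishing at three non-collinear boundary points must be identically zero.

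The substantive case is $v\in\HTwoNCSpace$ with $|v|_{2,h}=0$, where I expect the main obstacle to lie: here $v$ is only piecewise affine, $v|_K\in\prob_1(K)$, and the nonconforming constraints must be used to upgrade it to the zero function. First I would exploit the normal-derivative jump condition: on an interior edge $e$ both $\partial_n v|_{K^\pm}$ are constants, so $[\partial_n v]$ is constant, and testing $\int_e[\partial_n v]\,p\,\mathrm{d}s=0$ with $p\equiv 1\in\prob_{\polOrder-2}(e)$ (legitimate since $\polOrder\ge 2$) forces $[\partial_n v]=0$. Next, value continuity across interior edges follows from the vertex data rather than from the moment conditions (which are vacuous when $\polOrder=2$): the two affine traces $v^\pm|_e$ agree at both endpoints of $e$ --- at interior vertices by the assumed continuity, at a boundary endpoint because each adjacent local value is $0$ --- and two affine functions of arc length that coincide at two points coincide on all of $e$, so $[v]=0$.

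Combining these, both the tangential derivative (from $[v]=0$) and the normal derivative (from the first step) of $v$ are continuous across every interior edge, so the piecewise-constant field $\nabla v$ is globally continuous, hence constant on the connected mesh; together with the global continuity of $v$ this makes $v$ a single affine function on $\Omega$. The boundary data then finishes the argument: $v$ vanishes at every boundary vertex, and a polygon supplies at least three non-collinear such vertices, so the affine $v$ is identically zero. This establishes definiteness of $|\cdot|_{2,h}$ on $\HTwoNCSpace$, and the two reductions of the first paragraph deliver all remaining claims. The only delicate points are verifying that the relevant moment spaces are nonempty for the fixed order $\polOrder\ge 2$ and correctly handling interior edges with an endpoint on $\partial\Omega$, both of which are dealt with above.
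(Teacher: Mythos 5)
Your argument is correct, but it is a genuinely different route from the paper's. The paper disposes of the whole lemma in two sentences: it cites the literature (Antonietti et al., Zhao et al., and Brenner's discrete Poincar\'e inequalities) for the fact that $|\cdot|_{2,h}$ is a norm on $H^2_0(\Omega)$ and on $\HTwoNCSpace$, and then deduces that $\vertiii{\cdot}_h$ is a norm from the coefficient bounds $\kappa\geq\kappa_0>0$, $\beta,\gamma\geq 0$ --- exactly your reduction $\vertiii{w_h}_h^2\geq\kappa_0|w_h|_{2,h}^2$. What you add is a self-contained proof of the cited kernel statement: if $|v|_{2,h}=0$ then $v$ is piecewise affine, the constant normal-derivative jump is killed by testing against $p\equiv 1\in\prob_{\polOrder-2}(e)$, and value continuity follows from the endpoint data rather than the (possibly vacuous for $\polOrder=2$) edge moments, so that $\nabla v$ is globally constant, $v$ is a single affine function, and the boundary vertex values force $v\equiv 0$. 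This is a clean elementary argument and your attention to the $\polOrder=2$ degeneracy of the $\prob_{\polOrder-3}(e)$ moments and to internal edges with a boundary endpoint is exactly the kind of care the cited references have to take. The trade-off is the usual one: the paper's citation is shorter and inherits the quantitative discrete Poincar\'e--Friedrichs inequalities (which it needs later for the norm equivalences $|w_h|_{s,h}\leq C\eta_s\vertiii{w_h}_h$), whereas your proof establishes only definiteness; it proves the lemma as stated but would not by itself supply those later bounds.
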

\begin{proof}
    From \cite{antonietti_fully_2018,zhao_nonconforming_2016,brenner2004poincare} it follows that $|\cdot |_{2,h} $ is a norm on both $H^2_0(\Omega)$ and $\HTwoNCSpace$. Consequently, $\vertiii{\cdot}_h$ is a norm on $ \HTwoNCSpace$ under the given conditions on the coefficients $\kappa,\beta,\gamma$ stated in Section \ref{section: cts problem}.
\end{proof}

The following is now a direct consequence of the stability assumption \ref{assumption: stability property}:
\begin{theorem}[Existence and uniqueness of solutions to the discrete problem]
    Under Assumption \ref{assumptions: discrete assumptions} the discrete problem: find $u_h \in \vemSpace$ such that 
    \begin{align}\label{eqn: discrete problem}
        a_h(u_h,v_h) = \langle f_h,v_h \rangle \quad \forall \, v_h \in \vemSpace
    \end{align}
    admits a unique solution. 
\end{theorem}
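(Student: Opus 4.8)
The plan is to reduce the statement to a direct application of the Lax--Milgram Lemma on the finite dimensional space $\vemSpace$, with the coercivity and continuity of $a_h$ supplied essentially for free by the stability property \ref{assumption: stability property} together with Lemma \ref{lemma: discrete norms are norms}. Since everything happens on a finite dimensional space, there is no genuine functional-analytic difficulty; the whole point is to convert the element-wise stability bounds into a global coercivity statement in the norm $\vertiii{\cdot}_h$.

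First I would record two preliminary facts. By \ref{assumption: A2} the space $\vemSpace$ is finite dimensional, and by Lemma \ref{lemma: discrete norms are norms} the discrete energy norm $\vertiii{\cdot}_h$ is genuinely a norm on it, so that $(\vemSpace, \vertiii{\cdot}_h)$ is a complete inner product space. Second, I would make the key identification that the element contribution to the continuous energy norm coincides with the diagonal of the local continuous form, $a^K(v_h,v_h) = \vertiii{v_h}_K^2$, which is immediate from the definition of $a$ in \eqref{eqn: bilinear form cts} and of $\vertiii{\cdot}_K$ in Lemma \ref{lemma: discrete norms are norms}.

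The central step is then to sum the stability estimate \ref{assumption: stability property} over all $K \in \cT_h$. Using the additivity $a_h(v_h,v_h) = \sum_{K \in \cT_h} a_h^K(v_h,v_h)$ from Assumption \ref{assumptions: discrete assumptions} together with the identification above, the summed bounds read
\begin{align*}
    \alpha_* \vertiii{v_h}_h^2 \leq a_h(v_h,v_h) \leq \alpha^* \vertiii{v_h}_h^2 \qquad \text{for all } v_h \in \vemSpace.
\end{align*}
The lower bound, with $\alpha_* > 0$, is precisely coercivity of $a_h$ with respect to $\vertiii{\cdot}_h$. For continuity I would use that $a_h$ is symmetric and positive semidefinite, so that the Cauchy--Schwarz inequality for the bilinear form $a_h$ gives $a_h(u_h,v_h) \leq a_h(u_h,u_h)^{1/2} a_h(v_h,v_h)^{1/2} \leq \alpha^* \vertiii{u_h}_h \vertiii{v_h}_h$; the right hand side $v_h \mapsto \langle f_h, v_h \rangle$ is a bounded linear functional since $f_h \in \vemSpace'$ by Assumption \ref{assumptions: discrete assumptions}. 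The Lax--Milgram Lemma then delivers existence and uniqueness. Alternatively, and avoiding any symmetry assumption on $a_h$, I would argue purely by dimension: coercivity shows the linear endomorphism $u_h \mapsto a_h(u_h, \cdot)$ of $\vemSpace$ is injective, since $a_h(u_h,\cdot) = 0$ forces $a_h(u_h,u_h) = 0$, hence $\vertiii{u_h}_h = 0$ and $u_h = 0$ by Lemma \ref{lemma: discrete norms are norms}; an injective endomorphism of a finite dimensional space is bijective, yielding a unique $u_h$ for every right hand side.

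As for the main obstacle, there is no substantial one: the result is a textbook consequence of the stability property once the norm equivalence is in place. The only points requiring care are confirming that $\alpha_* > 0$ so that coercivity is strict, and invoking Lemma \ref{lemma: discrete norms are norms} to guarantee that $\vertiii{\cdot}_h$ separates points, so that vanishing discrete energy really forces $v_h = 0$. The dimension-based argument is attractive because it sidesteps the need to verify continuity of $a_h$ at all and makes no symmetry hypothesis, relying solely on the coercivity furnished by the summed stability estimate.
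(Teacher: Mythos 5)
Your proposal is correct and follows essentially the same route as the paper: coercivity and continuity of $a_h$ with respect to $\vertiii{\cdot}_h$ are read off from the stability property \ref{assumption: stability property} together with the identity $a^K(v_h,v_h)=\vertiii{v_h}_K^2$ and Lemma \ref{lemma: discrete norms are norms}, and the Lax--Milgram Lemma concludes. Your alternative finite-dimensional injectivity argument is a nice bonus since symmetry of $a_h$ is not formally part of Assumption \ref{assumptions: discrete assumptions}, but it does not change the substance of the proof.
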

We now have the following Strang-type error bound, the proof of which is standard and identical to the method from e.g. \cite{cangiani_conforming_2015}. 
\begin{theorem}[A priori error bound]\label{thm: a-priori error bound energy norm}
    Under Assumption \ref{assumptions: discrete assumptions} it holds that
    \begin{equation}
        \begin{split}
            \alpha_* \vertiii{u-u_h}_h \leq& \inf_{v_h \in \vemSpace} \alpha^* \vertiii{u-v_h}_h + \sup_{\substack{w_h \in \vemSpace \\ w_h \neq 0}} \frac{| \langle f_h,w_h \rangle - (f,w_h)|}{ \vertiii{w_h}_h} 
            + \sup_{\substack{w_h \in \vemSpace \\ w_h \neq 0}} \frac{| \mathcal{N}(u,w_h)|} {\vertiii{w_h}_h} \\
            &+ \inf_{p \in \prob_{\polOrder}(\cT_h)} \Big( (\alpha^* +1) \vertiii{u-p}_h + \sum_{K \in \cT_h} \sup_{\substack{w_h \in \vemSpace^K \\ w_h \neq 0}} \frac{|a^K(p,w_h)-a^K_h(p,w_h)|}{ \vertiii{w_h}_K } \Big),
        \end{split}
        \label{eqn: a-priori bound}
    \end{equation}
    where $\alpha_*$ and $\alpha^*$ are from the stability property \ref{assumption: stability property}.
    The nonconformity error is given by
    \begin{align}\label{eqn: nonconformity error}
        \mathcal{N}(u,w_h) = a(u,w_h)  - (f,w_h).
    \end{align}
\end{theorem}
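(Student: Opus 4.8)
The plan is to prove this as a nonconforming ``second Strang lemma'' for the virtual element setting, exactly along the lines of \cite{cangiani_conforming_2015}. The two facts I would record first are that, as a consequence of the stability property \ref{assumption: stability property} together with Lemma \ref{lemma: discrete norms are norms}, the discrete form $a_h$ is coercive and continuous with respect to $\vertiii{\cdot}_h$: summing the lower bound in \ref{assumption: stability property} over $K$ gives $\alpha_*\vertiii{v_h}_h^2 \le a_h(v_h,v_h)$, while the upper bound combined with the Cauchy--Schwarz inequality (applied elementwise to the inner product $a_h^K$ and then to the sum over $K$) gives $a_h(u_h,v_h) \le \alpha^*\vertiii{u_h}_h\vertiii{v_h}_h$. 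The overall strategy is then: fix an arbitrary $v_h \in \vemSpace$, split $\vertiii{u-u_h}_h \le \vertiii{u-v_h}_h + \vertiii{w_h}_h$ with $w_h := u_h - v_h \in \vemSpace$, and estimate $\vertiii{w_h}_h$ (assuming it is nonzero, the bound being trivial otherwise) by testing the discrete equation against $w_h$ itself.

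The core of the argument is a single algebraic decomposition. Starting from coercivity and the discrete equation \eqref{eqn: discrete problem}, $\alpha_*\vertiii{w_h}_h^2 \le a_h(w_h,w_h) = \langle f_h, w_h\rangle - a_h(v_h, w_h)$, I would insert an arbitrary polynomial $p \in \prob_{\polOrder}(\cT_h)$ and the continuous data by repeatedly adding and subtracting terms: write $\langle f_h,w_h\rangle = \big(\langle f_h,w_h\rangle - (f,w_h)\big) + (f,w_h)$, replace $(f,w_h)$ using the nonconformity functional via $(f,w_h) = a(u,w_h) - \mathcal{N}(u,w_h)$ (where $a(u,w_h) := \sum_K a^K(u,w_h)$ is the elementwise extension of the continuous form to $w_h \in \HTwoNCSpace$), and split $a_h(v_h,w_h) = a_h(v_h-p,w_h) + a_h(p,w_h)$. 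Regrouping $a(u,w_h) - a_h(p,w_h) = a(u-p,w_h) + \big(a(p,w_h) - a_h(p,w_h)\big)$ then yields
\begin{align*}
  \alpha_*\vertiii{w_h}_h^2 \le{}& \big(\langle f_h,w_h\rangle - (f,w_h)\big) - \mathcal{N}(u,w_h) + \sum_{K\in\cT_h} a^K(u-p,w_h) \\
  &+ \sum_{K\in\cT_h}\big(a^K(p,w_h) - a_h^K(p,w_h)\big) - a_h(v_h-p,w_h),
\end{align*}
which exhibits the five independent error sources.

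I would then bound the five contributions term by term and divide by $\vertiii{w_h}_h$. The first two are estimated directly by the right-hand side consistency supremum and the nonconformity supremum appearing in \eqref{eqn: a-priori bound}. For the third, the elementwise Cauchy--Schwarz inequality for $a^K$ (each $a^K(\cdot,\cdot)$ is an inner product whose induced norm is exactly $\vertiii{\cdot}_K$) followed by the discrete Cauchy--Schwarz inequality over $K$ gives $\sum_K a^K(u-p,w_h) \le \vertiii{u-p}_h\vertiii{w_h}_h$. For the polynomial consistency term I would use $\vertiii{w_h}_K \le \vertiii{w_h}_h$ to factor out $\vertiii{w_h}_h$ and reproduce $\sum_K\sup_{w_h}|a^K(p,w_h)-a_h^K(p,w_h)|/\vertiii{w_h}_K$. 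The last term is controlled by continuity of $a_h$, giving $a_h(v_h-p,w_h) \le \alpha^*\vertiii{v_h-p}_h\vertiii{w_h}_h$. Collecting these, using the triangle inequalities $\vertiii{u-u_h}_h \le \vertiii{u-v_h}_h + \vertiii{w_h}_h$ and $\vertiii{v_h-p}_h \le \vertiii{u-v_h}_h + \vertiii{u-p}_h$, and finally taking the infimum over $v_h \in \vemSpace$ and independently over $p \in \prob_{\polOrder}(\cT_h)$, produces \eqref{eqn: a-priori bound}, the coefficients $\alpha^*$ and $\alpha^*+1$ arising from tracking the continuity constant through these two triangle inequalities.

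The step deserving the most care is the treatment of consistency and nonconformity, which is precisely what distinguishes this from a conforming Galerkin estimate. Because $\vemSpace \not\subset H^2_0(\Omega)$, the discrete test function $w_h$ is not admissible in the continuous weak form, so $a(u,w_h) - (f,w_h) = \mathcal{N}(u,w_h)$ need not vanish and must be retained as a separate error term. Simultaneously, $a_h$ is only defined on $\vemSpace$ and cannot be evaluated on $u$; this is the reason the polynomial $p$ must be introduced, so that the comparison $a^K(p,\cdot)$ versus $a_h^K(p,\cdot)$ captures the scheme's polynomial inconsistency while $a^K(u-p,\cdot)$ is absorbed into the best-approximation term. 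Getting these insertions in the right order, and pulling $\vertiii{w_h}_h$ cleanly out of the elementwise consistency sum, is the only genuinely delicate bookkeeping; everything else reduces to Cauchy--Schwarz and the triangle inequality.
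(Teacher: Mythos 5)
Your proposal is correct and follows essentially the same route as the paper, which omits the details and refers to the standard Strang-type argument of Cangiani et al.: coercivity of $a_h$ applied to $w_h = u_h - v_h$, insertion of an arbitrary $p \in \prob_{\polOrder}(\cT_h)$ and of $(f,w_h)$ to isolate the data-consistency, nonconformity, best-approximation and polynomial-consistency terms, followed by Cauchy--Schwarz and the two triangle inequalities. The decomposition and term-by-term bounds you give coincide with the paper's own (unprinted) derivation.
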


We finish this section by collecting the remaining technicalities needed for the rest of the paper. In particular, we make the following regularity conditions on the mesh $\cT_h$ which are standard in the virtual element framework, see e.g., \cite{beirao_da_veiga_basic_2013}.

\begin{assumption}[Mesh assumptions]\label{assumption: mesh regularity}
    Assume there exists some $\rho >0$ such that the following hold.
    \begin{enumerate}[label=(\emph{A}\arabic*)]
        \setcounter{enumi}{\value{assumptions1}}
            \item For every element $K \in \cT_h$ and every edge $e \subset \partial K$, $h_e \geq \rho h_K$ where $h_e=|e|$ and $h_K$ is the diameter of $K$.
            \label{assump: mesh 1}
            \item Assume that each element is star shaped with respect to a ball of radius $\rho h_K$.
            \label{assump: star shaped wrt a ball}
        \setcounter{assumptions1}{\value{enumi}}
     \end{enumerate}
\end{assumption} 

Finally, we recall some standard results for the $L^2$ projection operator.
\begin{definition}
    For any $K \in \cT_h$ define the $L^2(K)$ \emph{orthogonal projection} onto the polynomial space $\prob_{\polOrder}(K)$, that is
    $\mathcal{P}^{\polOrder}_K : L^2(K) \rightarrow \prob_{\polOrder}(K)$ by, 
\begin{align*}
    ( \mathcal{P}^{\polOrder}_K v, p)_K = (v,p)_K \quad \text{ for all } p \in \prob_{\polOrder}(K),
\end{align*}
and for any edge $e \subset \partial K$ define the $L^2(e)$ orthogonal projection onto $\prob_{\polOrder}(e)$, $\mathcal{P}^{\polOrder}_e : L^2(e) \rightarrow \prob_{\polOrder}(e)$ by,
\begin{align*}
    ( \mathcal{P}^{\polOrder}_e v, p)_e = (v,p)_e \quad \text{ for all } p \in \prob_{\polOrder}(e). 
\end{align*} 
\end{definition}
A proof of the following error estimates can be obtained using for example the theory in either \cite{brenner_mathematical_2008,clarlet1987finite}.
\begin{theorem}\label{thm: interpolation estimates}
    Under Assumption \ref{assumption: mesh regularity}, for $\polOrder\geq 0$ and for any $w \in H^m (K)$ with ${1 \leq m \leq \polOrder +1}$, it follows that
\begin{align*}
    | w - \cP^{\polOrder}_K w |_{s,K} \lesssim h_K^{m-s} |w|_{m,K}
\end{align*}
for $s = 0,1,2$.
Further, for any edge shared by $K^{+}$,$K^{-} \in \cT_h$ 
and for any $w \in H^m(K^{+} \cup K^{-})$, with 
$1 \leq m \leq \polOrder+1$, it follows that
\begin{align*}
    |w-\cP^{\polOrder}_e w|_{s,e} 
    \lesssim 
    h_e^{m-s-\frac{1}{2}} | w |_{m,K^{+}\cup K^{-}}
\end{align*}
for $s=0,1,2$.
\end{theorem}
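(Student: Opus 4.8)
The plan is to reduce both the element and the edge estimate to three standard ingredients, all of which follow from the mesh regularity in Assumption~\ref{assumption: mesh regularity}: (i) a polynomial approximation estimate of Bramble--Hilbert type, namely that for each $K$ there is a $q \in \prob_{\polOrder}(K)$ with $|w-q|_{j,K} \lesssim h_K^{m-j}|w|_{m,K}$ for $0 \le j \le m$; (ii) the inverse inequality $|p|_{j,K} \lesssim h_K^{-j}\|p\|_{0,K}$ for $p\in\prob_{\polOrder}(K)$, together with its one-dimensional counterpart $|p|_{j,e}\lesssim h_e^{-j}\|p\|_{0,e}$ on an edge; and (iii) the $L^2$-stability $\|\cP^{\polOrder}_K v\|_{0,K}\le \|v\|_{0,K}$ and $\|\cP^{\polOrder}_e v\|_{0,e}\le\|v\|_{0,e}$, which is immediate since both operators are orthogonal projections. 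Ingredients (i) and (ii) are precisely the scaled estimates developed in \cite{brenner_mathematical_2008,clarlet1987finite}, with constants controlled through the star-shapedness hypothesis \ref{assump: star shaped wrt a ball} and the edge-diameter bound \ref{assump: mesh 1}.

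For the element estimate I would exploit that $\cP^{\polOrder}_K$ reproduces polynomials of degree $\le\polOrder$, so that for the $q$ from (i) one has $w - \cP^{\polOrder}_K w = (I - \cP^{\polOrder}_K)(w-q)$, whence $|w-\cP^{\polOrder}_K w|_{s,K} \le |w-q|_{s,K} + |\cP^{\polOrder}_K(w-q)|_{s,K}$. The first term is bounded directly by (i). For the second I apply the inverse inequality (ii) to the polynomial $\cP^{\polOrder}_K(w-q)$, then the $L^2$-stability (iii), and finally (i) with $j=0$:
\begin{align*}
  |\cP^{\polOrder}_K(w-q)|_{s,K} \lesssim h_K^{-s}\|\cP^{\polOrder}_K(w-q)\|_{0,K} \le h_K^{-s}\|w-q\|_{0,K} \lesssim h_K^{-s}\,h_K^{m}|w|_{m,K},
\end{align*}
and summing the two contributions yields $h_K^{m-s}|w|_{m,K}$, as claimed.

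For the edge estimate I would run the identical splitting $w-\cP^{\polOrder}_e w=(I-\cP^{\polOrder}_e)(w-q)$, giving $|w-\cP^{\polOrder}_e w|_{s,e}\le|w-q|_{s,e}+|\cP^{\polOrder}_e(w-q)|_{s,e}$; the one-dimensional inverse inequality and $L^2(e)$-stability reduce the second term to $h_e^{-s}\|w-q\|_{0,e}$. The new ingredient needed to control the edge seminorms $|w-q|_{j,e}$ is a scaled trace inequality of the form $\|v\|_{0,e}^2 \lesssim h_K^{-1}\|v\|_{0,K}^2 + h_K|v|_{1,K}^2$, valid on star-shaped elements. Applying it to the derivatives of order $j$, $D^{j}(w-q)$, and inserting the approximation bounds from (i) gives $|w-q|_{j,e}\lesssim h_K^{m-j-1/2}|w|_{m,K^+\cup K^-}$; the inverse-inequality term is likewise $h_e^{-s}\|w-q\|_{0,e}\lesssim h_K^{m-s-1/2}|w|_{m,K^+\cup K^-}$. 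Summing over the two elements sharing $e$ and using $h_e \ge \rho h_K$ from \ref{assump: mesh 1} (so that $h_e$ and $h_K$ are comparable) produces the exponent $m-s-\tfrac12$, the $-\tfrac12$ being exactly the scaling weight carried by the trace inequality.

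The only genuinely delicate point, and the step I expect to demand the most care, is making the constants in (i)--(iii) and in the trace inequality uniform across the whole family of admissible polygons rather than over a single fixed reference element. This is where \ref{assump: star shaped wrt a ball} is essential: after rescaling $K$ by $h_K^{-1}$ one obtains a normalized element of unit diameter containing a ball of radius $\rho$, and the Bramble--Hilbert lemma with explicit dependence on this chunkiness parameter (as in \cite{brenner_mathematical_2008}) furnishes bounds depending only on $\rho$ and $\polOrder$. One should also observe that each estimate is only meaningful when the seminorms it involves are defined, i.e.\ for $s\le m$, so that for small $m$ the higher-order cases are vacuous.
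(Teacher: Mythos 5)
Your proposal is correct and follows precisely the standard Bramble--Hilbert / inverse-inequality / scaled-trace-inequality route that the paper itself does not write out but delegates to \cite{brenner_mathematical_2008,clarlet1987finite}; your closing caveats (uniformity of constants via the star-shapedness parameter $\rho$, and the estimates being vacuous when $s$ exceeds the available regularity) are exactly the right points to flag. No further comparison is needed.
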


\section{The virtual element spaces}\label{section: VEM spaces}
We dedicate \red{this} \strike{the next} section to the virtual element discretization. We specify the chosen degrees of freedom by a \emph{dof tuple} which allows us to easily encode a number of different local VEM spaces. A major part of the VEM method is the construction of projection operators. We detail a new construction of projection operators suitable for general VEM discretization of a wide range of fourth order problems with nonconstant coefficients.

Throughout this section we provide examples of nonconforming VEM spaces for fourth order problems ($C^1$ nonconforming spaces). In particular, we use as examples the original nonconforming space detailed in \cite{antonietti_fully_2018,zhao_morley-type_2018} with $ \vemSpace \not\subset H^1_0(\Omega)$. As well as this space, we look at the $C^0$ conforming space detailed in \cite{zhao_nonconforming_2016,zhang_nonconforming_2020}, such that $ \vemSpace \subset H^1_0(\Omega)$. 
We conclude this section with defining the global VEM spaces and the bilinear forms. 

\subsection{Degrees of freedom tuple} 
We begin this section by introducing the concept of a degrees of freedom (dof) tuple, used to generically describe the degrees of freedom relating to a VEM space on each element of the grid. So let $K\in\cT_h$ be fixed in the following.

\begin{definition}
    For the $C^1$ virtual element spaces, let the \emph{degrees of freedom tuple}, $M \in \Z^{5}$, be defined as  
    \begin{align}\label{eqn: dof tuple}
        M = \big( \vertexValueDofs, \vertexDerivDofs, \edgeValueDofs,\edgeNormalDofs ,\innerDofs \big)~.
    \end{align}
    The entries correspond to the number of moments used in the definition of our degrees of freedom, with $d^v_j$, for $j=0,1$, encoding the information for the vertex moments, $d^e_j$ for $j=0,1$, for the edge moments, and $\innerDofs$ for the inner moments. The subscript $j=0$ corresponds to moments of the function values and $j=1$ to derivative moments on the vertices and edges.
\end{definition}
From the dof tuple, we are able to define the corresponding degrees of freedom.
\begin{definition}\label{defn: local dofs general}
    For a function $v_h \in H^2(K)$, the \emph{local degrees of freedom} corresponding to the degrees of freedom tuple in \eqref{eqn: dof tuple} are given by the following.
    \begin{enumerate}[label=(\emph{D}\arabic*)]
        \item The values $h_v^{j} D^{j} v_h$ for each vertex $v$ of $K$ for $j=0,1$. 
        Here $h_v$ is some local length scale associated to the vertex $v$, e.g., an average of the diameters of all surrounding elements. \label{dof 1}
        \item The moments of $\partial_n^j v_h$ up to order $d^e_j$ on each $e \subset \partial K$, for $j =0,1,$
        \begin{align*}
            |e|^{-1+j} \int_e \partial_n^{j} v_h p \, \mathrm{d} s \quad \forall p \in \prob_{d^e_j}(e).
        \end{align*} 
        \label{dof 2}
        \item The moments of $v_h$ up to order $\innerDofs$ inside $K$,
        \begin{align*}
            \frac{1}{|K|} \int_K v_h p \, \mathrm{d}x \quad \forall p \in \prob_{\innerDofs}(K).
        \end{align*} \label{dof 3}
    \end{enumerate}
\end{definition}

\begin{remark}
    We use the convention that $D^0 v_h = v_h$ and $\partial_n^0 v_h = v_h$. If any of the entries in the dof tuple $M$ are zero, this implies that we take constant moments, if an entry is less than or equal to $-1$, then this corresponds to no moments. So for example $d^v_0=-1$ implies that no vertex values are used. The case $\vertexValueDofs=-1$ is for example relevant for the $C^0$ nonconforming space presented in \cite{cangiani_conforming_2015} but will not be considered in the discussion here.

    Note that for the $C^1$ nonconforming spaces, we always have $\vertexValueDofs = 0$ and $\vertexDerivDofs = -1$. We only begin to prescribe the vertex derivative values when we consider $C^1$ conforming spaces or even higher order conforming spaces see for example \cite{antonietti_conforming_2018, da_veiga_c1_2019}.
\end{remark}

We now give some examples of degrees of freedom tuples relating to some common VEM spaces to illustrate the idea. 

\begin{example}\label{example: dof tuples}
    For $\polOrder \geq 2$, consider the original $C^1$ nonconforming space introduced in \cite{antonietti_fully_2018}. The dof tuple describing this space is
    $$ M^{C^1}_{nc} = (0,-1,\polOrder-3,\polOrder-2,\polOrder-4). $$
    This dof tuple also corresponds to the space considered in \cite{zhao_morley-type_2018}. A visualization of these dofs on triangles can be seen in Figure \ref{figure: C^1 non-conforming dofs for l=2,3,4,5 on triangles}.

    For $\polOrder \geq 2$, consider the $C^1$ space introduced in \cite{zhao_nonconforming_2016, zhang_nonconforming_2020} which is $C^0$ conforming. This space is described by the dof tuple
    $$ M^{C^1}_{C^0 conf} = (0,-1,\polOrder-2,\polOrder-2,\polOrder-4). $$
    The dofs for this space are shown in Figure \ref{figure: C1C0Zhang dofs on triangles}.

    For $\polOrder \geq 3$, consider the $C^1$ conforming space given in \cite{antonietti_conforming_2018}. The dof tuple describing the local dofs for this space is given by
    $$ M^{C^1}_{conf} = (0,0,\polOrder-4,\polOrder-3,\polOrder-4). $$
\end{example}
\input{Sections/tikzDofs}
\begin{remark}\label{rmk: serendipity dofs}
    As mentioned already, the concept of defining a dof tuple extends to describing the dofs relating to $C^0$ VEM spaces. Consider the simplest of the $C^0$ conforming serendipity spaces discussed in \cite{da_veiga_serendipity_2015}. The dofs for this space can be described by the dof tuple
    $$ M^{C^0}_{sc} = (0,-1,\polOrder-2,-1,\polOrder-3). $$
\end{remark}

\subsection{Local spaces and projection operators}
We now focus on the crucial aspect of defining the VEM spaces and projection operators. Following the approach in \cite{ahmad_equivalent_2013,cangiani_conforming_2015} we introduce an enlarged local virtual element space to ensure that our projection operators satisfy certain $L^2$ projection properties which are stated at the end of this section.

\begin{definition}\label{defn: enlarged vem space and extended dof tuple}
    The \emph{enlarged virtual element space} $\enlargedVemSpace^K$ on $K\in\cT_h$ is defined as follows. 
    \begin{align}\label{eqn: enlarged space}
        \enlargedVemSpace^K := \{ v_h \in H^2(K) : \Delta^2 v_h \in \prob_{\polOrder}(K), \, v_h|_e \in \prob_{\polOrder}(e), \, \Delta v_h |_e \in \prob_{\polOrder-2}(e), \, \forall \, e \subset \partial K \} 
    \end{align}
    with dimension $ \dim{\enlargedVemSpace^K} := \widetilde{N}_V^K = n_e^K(2\polOrder -1) + \frac{1}{2}(\polOrder+1)(\polOrder+2)$, where $n_e^K$ denotes the number of edges in the polygon $K$. 

    The enlarged space $\enlargedVemSpace^K$ is characterized by the following \emph{extended degrees of freedom tuple}, denoted by $\widetilde{M}^K$ \red{$\in \Z^5$}, where 
    $$ \widetilde{M}^K = (0,-1,\polOrder-2,\polOrder-2,\polOrder). $$  
\end{definition}
Note that the number of extended dofs is equal to $\widetilde{N}_{dofs} = n_e^K(2\polOrder-1)+\frac{1}{2}(\polOrder+1)(\polOrder+2)$ so that we have $\widetilde{N}_V^K = \widetilde{N}_{dofs}$. We denote with $\widetilde{\Lambda}^K$ the set of extended degrees of freedom described by $\widetilde{M}^K$ as given by Definition \ref{defn: local dofs general}. We show next that this set of degrees of freedom is unisolvent in $\enlargedVemSpace^K$.

\begin{lemma}\label{lemma: extended degrees of freedom unisolvent}
    The set of extended degrees of freedom $\widetilde{\Lambda}^K$ is unisolvent over the space $\enlargedVemSpace^K$.
\end{lemma}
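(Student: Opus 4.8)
The plan is to exploit the observation made just above the lemma, namely that the number of functionals in $\widetilde{\Lambda}^K$ equals $\dim \enlargedVemSpace^K = \widetilde{N}_V^K$. For a linear evaluation map between finite-dimensional spaces of equal dimension, injectivity is equivalent to bijectivity, so it suffices to prove that the only $v_h \in \enlargedVemSpace^K$ annihilating every degree of freedom described by $\widetilde{M}^K = (0,-1,\polOrder-2,\polOrder-2,\polOrder)$ is $v_h \equiv 0$. I would therefore fix such a $v_h$ and show it vanishes.

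First I would pin down the boundary trace. By the definition \eqref{eqn: enlarged space}, $v_h|_e \in \prob_{\polOrder}(e)$ on each edge $e \subset \partial K$, so $v_h|_e$ is determined by $\polOrder+1$ data. These are exactly the two vertex values at the endpoints of $e$ (the dofs \ref{dof 1}, since $\vertexValueDofs = 0$) together with the $\polOrder-1$ edge value moments $\int_e v_h p \, \mathrm{d}s$ for $p \in \prob_{\polOrder-2}(e)$ (the dofs \ref{dof 2} with $\edgeValueDofs = \polOrder-2$), noting $\dim\prob_{\polOrder-2}(e)=\polOrder-1$ so that $2+(\polOrder-1)=\polOrder+1$. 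Since all these functionals vanish, $v_h|_e = 0$ on each edge, hence $v_h|_{\partial K} = 0$.

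Next I would apply the second Green's identity to the pair $(v_h, \Delta v_h)$, which yields
\begin{align*}
    \int_K |\Delta v_h|^2 \, \mathrm{d}x = \int_K v_h \, \Delta^2 v_h \, \mathrm{d}x - \int_{\partial K} v_h \, \partial_n(\Delta v_h) \, \mathrm{d}s + \int_{\partial K} (\Delta v_h)\, \partial_n v_h \, \mathrm{d}s.
\end{align*}
Each right-hand term vanishes: the volume term because $\Delta^2 v_h \in \prob_{\polOrder}(K)$ while all inner moments $\int_K v_h p \, \mathrm{d}x$, $p \in \prob_{\polOrder}(K)$, vanish (dofs \ref{dof 3} with $\innerDofs = \polOrder$); the first boundary term because $v_h|_{\partial K} = 0$; and the last boundary term because $\Delta v_h|_e \in \prob_{\polOrder-2}(e)$ while the normal-derivative edge moments $\int_e \partial_n v_h \, p \, \mathrm{d}s$, $p \in \prob_{\polOrder-2}(e)$, all vanish (dofs \ref{dof 2} with $\edgeNormalDofs = \polOrder-2$). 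Consequently $\Delta v_h = 0$ in $K$, so $v_h$ is harmonic with zero Dirichlet trace, and uniqueness for the Laplace problem forces $v_h \equiv 0$.

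The point requiring the most care is the matching of the polynomial degrees hard-wired into $\enlargedVemSpace^K$ with the moment orders in $\widetilde{M}^K$: it is precisely the choices $v_h|_e \in \prob_{\polOrder}(e)$, $\Delta v_h|_e \in \prob_{\polOrder-2}(e)$ and $\Delta^2 v_h \in \prob_{\polOrder}(K)$ that render the three terms above exactly testable against the available moments, so I would verify these three degree matchings explicitly rather than leave them implicit. The only remaining technical issue is justifying the Green's identity, i.e.\ that $v_h$ is regular enough near $\partial K$ for $\partial_n(\Delta v_h)$ to be meaningful; since $\Delta^2 v_h$ is a polynomial, $v_h$ is smooth in the interior and the boundary integrals are read in the natural trace/duality sense, making this step routine rather than the essential obstacle.
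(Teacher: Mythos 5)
Your proof is correct, and it takes a genuinely leaner route than the paper's. Both arguments ultimately rest on the same two observations --- that $\Delta^2 v_h \in \prob_{\polOrder}(K)$ can be tested against the vanishing inner moments and that $\Delta v_h|_e \in \prob_{\polOrder-2}(e)$ can be tested against the vanishing normal moments --- but the quantity you drive to zero is different. The paper works with the full Hessian seminorm, expanding $\int_K D^2 v_h : D^2 v_h$ via a Green-type formula that produces tangential derivatives $\partial_{ss}v_h$, $\partial_{ns}v_h$ and corner terms at the edge endpoints, which must then be eliminated one by one (using the vanishing vertex values) before concluding $|v_h|_{2,K}=0$, whence $v_h$ is affine and vanishes by the boundary data. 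You instead first establish $v_h|_{\partial K}=0$ outright from the unisolvence of the $\polOrder+1$ trace functionals on $\prob_{\polOrder}(e)$ (a point the paper leaves largely implicit, invoking only the vertex values), and then apply Green's second identity to get $\int_K|\Delta v_h|^2=0$ with no tangential or corner terms at all, finishing with uniqueness for the harmonic Dirichlet problem. What your version buys is a shorter computation and a cleaner endgame; what it gives up is the explicit identity $|v_h|_{2,K}=0$, which the paper's route produces as a byproduct and which is the kind of statement one often wants to reuse elsewhere (e.g.\ for stability arguments). The dimension-count reduction of bijectivity to injectivity and the caveat about regularity of $\partial_n(\Delta v_h)$ are handled at the same level of rigour as in the paper, so neither constitutes a gap.
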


\ifthenelse{\boolean{thesis}}
{
    \begin{proof}
        We show that if all the degrees of freedom vanish for $v_h \in \enlargedVemSpace^K$ then $v_h \equiv 0$. Using Green's formula \cite{clarlet1987finite}, for a function $v_h \in \enlargedVemSpace^K$ it follows that 
        \begin{align*}
            |v_h|_{2,K}^2 = \int_K D^2 v_h : D^2 v_h &= \int_K \Delta v_h \Delta v_h+ \int_{K} 2 \partial_{12} v_h \partial_{12} v_h - \partial_{11} v_h \partial_{22} v_h - \partial_{22} v_h \partial_{11} v_h
            \\
            &= 
            \int_K \Delta^2 v_h v_h - \int_{\partial K} v_h \partial_n (\Delta v_h) + \int_{\partial K} \partial_n v_h (\Delta v_h - \partial_{ss} v_h )
            + \int_{ \partial K } \partial_{ns} v_h \partial_s v_h 
            \intertext{using intergration by parts on the last term we see that}
            \int_{\partial K} \partial_{ns} v_h \partial_s v_h &= \sum_{e \subset \partial K} \int_{ e } \partial_{ns} v_h \partial_s v_h  = - \sum_{e \subset \partial K}  \int_e \partial_{sns} (v_h) v_h + [v_h\partial_{ns}v_h](e^{+}) + [v_h\partial_{ns}v_h](e^{-}) 
        \end{align*}
        where $e^{\pm}$ denote the vertices of an edge $e$.
        Now assuming that all the degrees of freedom $\widetilde{\Lambda}^K$ vanish, implies that the vertex values vanish - see \ref{dof 1}. Therefore,
        \begin{align*}
            |v_h|_{2,K}^2 &= \int_K \Delta^2 v_h v_h - \int_{\partial K} v_h \big( \partial_n (\Delta v_h + \partial_{ss} v_h ) \big) + \int_{\partial K} \partial_n v_h \big( \Delta v_h - \partial_{ss} v_h \big). 
        \end{align*}
        Using integration by parts on the second term, it holds that
        \begin{align*}
            \int_{e} v_h \big( \partial_n (\Delta v_h + \partial_{ss} v_h ) \big) =& - \int_e \partial_n v_h (\Delta v_h + \partial_{ss} v_h ) \\
            &+ [(\Delta v_h + \partial_{ss} v_h ) v_h ](e^{+}) + [(\Delta v_h + \partial_{ss} v_h ) v_h ](e^{-}) 
        \end{align*}
        where the last two terms are zero as well. Finally, it follows that  
        \begin{align*}
            |v_h|_{2,K}^2 &= \int_K \Delta^2 v_h v_h + \int_{\partial K} 2\partial_n v_h \Delta v_h.
        \end{align*}
        Since $v_h \in \enlargedVemSpace^K$, it follows from \eqref{eqn: enlarged space} that $\Delta^2 v_h \in \prob_{\polOrder}(K)$ and $\Delta v_h \in \prob_{\polOrder-2}(e)$. We therefore see that $|v_h|_{2,K}^2 = 0$. As in \cite{antonietti_fully_2018}, this implies that $v_h = 0$ due to the boundary conditions.
    \end{proof}
}
{
    \begin{proof}
        We show that if all the degrees of freedom vanish for $v_h \in \enlargedVemSpace^K$ then $v_h \equiv 0$. Using Green's formula \cite{clarlet1987finite}, for a function $v_h \in \enlargedVemSpace^K$ it follows that 
        \begin{align*}
            |v_h|_{2,K}^2 = \int_K D^2 v_h : D^2 v_h &= \int_K \Delta v_h \Delta v_h+ \int_{K} 2 \partial_{12} v_h \partial_{12} v_h - \partial_{11} v_h \partial_{22} v_h - \partial_{22} v_h \partial_{11} v_h
            \\
            &= 
            \int_K \Delta^2 v_h v_h - \int_{\partial K} v_h \partial_n (\Delta v_h) + \int_{\partial K} \partial_n v_h (\Delta v_h - \partial_{ss} v_h )
            + \int_{ \partial K } \partial_{ns} v_h \partial_s v_h. 
        \end{align*}
        \red{By using integration by parts on the last term and setting all degrees of freedom $\widetilde{\Lambda}^K$ to zero, we see that
        \begin{align*}
            |v_h|_{2,K}^2 &= \int_K \Delta^2 v_h v_h - \int_{\partial K} v_h \big( \partial_n (\Delta v_h + \partial_{ss} v_h ) \big) + \int_{\partial K} \partial_n v_h \big( \Delta v_h - \partial_{ss} v_h \big). 
        \end{align*}
        Applying integration by parts on the second term we arrive at the following
        \begin{align*}
            |v_h|_{2,K}^2 &= \int_K \Delta^2 v_h v_h + \int_{\partial K} 2\partial_n v_h \Delta v_h.
        \end{align*} }
        Since $v_h \in \enlargedVemSpace^K$, it follows from \eqref{eqn: enlarged space} that $\Delta^2 v_h \in \prob_{\polOrder}(K)$ and $\Delta v_h \in \prob_{\polOrder-2}(e)$. We therefore see that $|v_h|_{2,K}^2 = 0$. As in \cite{antonietti_fully_2018}, this implies that $v_h = 0$ due to the boundary conditions.
    \end{proof}
}

\begin{remark}
In this paper we focus on the construction for $C^1$ nonconforming spaces. However, note that the following discussion on defining projection operators also covers some conforming spaces suggested in the literature for solving second order problems, e.g., the spaces from \cite{da_veiga_serendipity_2015}.
As well as this, as discussed above a conforming $C^1$ space can also be described in this framework (using the dof tuple $(0,0,\polOrder-4,\polOrder-3,\polOrder-4)$). The nonconforming $C^0$ space from \cite{cangiani_conforming_2015} also fits the framework using the dof tuple $(-1,-1,\polOrder-1,-1,\polOrder-2)$. Note that both of these spaces require a different enlarged space. Since we are not going to analyse these two spaces, we do not discuss these choices further but we would like to note that our definition of projection operators cover these cases as well with only minimum modifications.
\end{remark}
\red{We now turn our attention to the local VEM space $\localVEMSpace$, which we define as a subspace of the enlarged virtual element space $\enlargedVemSpace^K$.} 
In order to define \red{$\localVEMSpace$,} \strike{the local VEM space,} we first construct the following projections: an interior value projection, $\valueProj : \enlargedVemSpace^K \rightarrow \prob_{\polOrder}(K)$, an edge value projection $\edgeProj : \enlargedVemSpace^K \rightarrow \prob_{\polOrder}(e)$, and an edge normal projection $\edgeNormalProj : \enlargedVemSpace^K \rightarrow \prob_{\polOrder-1}(e)$.
These projections have to be computable from the degrees of freedom $\originalDofs$ of a given $v_h\in \enlargedVemSpace^K$.
Using $\valueProj,\edgeProj,\edgeNormalProj$ we can then define the VEM space $\localVEMSpace$, the gradient projection, ${\gradProj : \enlargedVemSpace^K \rightarrow (\prob_{\polOrder-1}(K))^2}$, and finally the hessian projection ${\hessProj : \enlargedVemSpace^K \rightarrow (\prob_{\polOrder-2}(K))^{2 \times 2}}$, which satisfy certain $L^2$ projection properties discussed in the following. These projections are then used to define the discrete bilinear form.

\red{ To begin with, we assume that we have a degrees of freedom tuple $M^K=(0,-1,\edgeValueDofs,\edgeNormalDofs,\innerDofs)$ such that $d_j^e \leq \polOrder-2$, for $j=0,1$, and $\innerDofs \leq \polOrder$. Fixing the dof tuple $M^K$ gives us a set of degrees of freedom, $\originalDofs$, such that $\originalDofs \subset \widetilde{\Lambda}^K$. }

\begin{definition}\label{defn: dof compatible}
    We say that a value projection $\valueProj$, an edge projection $\edgeProj$, and an edge normal projection $\edgeNormalProj$ are \emph{dof compatible} if for $v_h\in\enlargedVemSpace^K$ they are a linear combination of the \strike{original} dofs $\originalDofs(v_h)$, and satisfy the following additional properties.
    \begin{itemize}
        \item The value projection $\valueProj v_h \in \prob_{\polOrder}(K)$ satisfies 
        \begin{align}\label{eqn: value proj}
            \int_K \valueProj v_h p = \int_K v_h p \quad \forall \, p \in \prob_{\innerDofs}(K),
        \end{align}
        and $\valueProj q = q$ for all $q \in \prob_{\polOrder}(K).$
        \item For each edge $e\subset\partial K$, the edge projection $\edgeProj v_h \in \prob_{\polOrder}(e)$ satisfies
        \begin{align}\label{eqn: edg proj}
            \int_e \edgeProj v_h p = \int_e v_h p  \quad \forall \, p \in \prob_{\edgeValueDofs}(e), \qquad 
            \edgeProj v_h (e^{\pm}) = v_h (e^{\pm}),
        \end{align}
        and $\edgeProj q = q|_e$ for all $q \in \prob_{\polOrder}(K)$.
        \item For each $e\subset\partial K$, the edge normal projection $\edgeNormalProj v_h \in \prob_{\polOrder -1}(e)$ satisfies
        \begin{align}\label{eqn: edge normal proj}
            \int_e \edgeNormalProj v_h p &= \int_e \partial_n v_h p \quad \forall \, p \in \prob_{\edgeNormalDofs}(e),
        \end{align}
        and $\edgeNormalProj q = \partial_n q |_e$ for all $q \in \prob_{\polOrder}(K)$. 
    \end{itemize}    
\end{definition}
Note that there are multiple choices for defining the value, edge, and edge normal projections such that they are dof compatible. We provide an example for defining these projections based on constraint least squares problems after defining the gradient and hessian projections.

\begin{definition}
    The \emph{gradient projection} $\gradProj : \enlargedVemSpace^K \rightarrow (\prob_{\polOrder-1}(K))^2$ is now taken to be 
    \begin{align}\label{eqn: grad proj}
        \int_K \gradProj v_h p = - \int_K \valueProj v_h \nabla p + \sum_{e \subset \partial K } \int_e \edgeProj v_h p n, \quad \forall \, p \in \prob_{\polOrder-1 }(K)^2
    \end{align}
    and the \emph{hessian projection} $\hessProj : \enlargedVemSpace^K \rightarrow (\prob_{\polOrder-2}(K))^{2 \times 2}$ to be
    \begin{align}
        \int_K \hessProj v_h p = - \int_K \gradProj v_h \otimes \nabla p + \sum_{e \subset \partial K } \int_{e} \big(  \edgeNormalProj v_h n \otimes n p + \partial_s ( \edgeProj v_h ) \tau \otimes n p \big),
        \quad \forall \, p \in (\prob_{\polOrder-2}(K))^{2\times 2}.
    \end{align}
    Here $n,\tau$ denote the unit normal and tangent vectors of $e$, respectively. 
\end{definition}

One possible dof compatible choice for the value and two edge projections is shown in the following example.

\begin{example}\label{ex: choice of value, edge, edge normal projections}
We consider projection operators obtained from constraint least squares problems.
Consider the dof tuple $(0,-1,\edgeValueDofs,\edgeNormalDofs,\innerDofs)$.
    \begin{itemize}
        \item We define the value projection $\valueProj v_h \in \prob_{\polOrder}(K)$ as the solution to the problem
        \begin{equation*}
            \begin{split}
                \emph{Minimize: } \quad &\sum_{i=1}^{N_{dof}} (dof_i(\valueProj v_h) - dof_i(v_h))^2, \\
                \emph{ subject to: } \quad &\int_K \valueProj v_h p = \int_K v_h p \quad \forall \, p \in \prob_{\innerDofs}(K).
            \end{split}
        \end{equation*}
        \red{ 
            Since $\valueProj$ is defined by a linear least squares problem with equality constraints, where the right hand side is given by dofs, it follows that $\valueProj$ is a linear combination of the dofs.} 
            From this definition it is clear that \eqref{eqn: value proj} holds. 
        \item 
        \strike{It is clear that} If we choose the edge projection to be the unique solution in $\prob_{\polOrder}(e)$ of
        \begin{equation*}
            \begin{split}
                \int_e \edgeProj v_h p &= \int_e v_h p \quad \forall \, p \in \prob_{\edgeValueDofs}(e), 
                \qquad
                \edgeProj v_h (e^{\pm}) = v_h (e^{\pm}),\\
                \int_e \edgeProj v_h p  &= \int_e \valueProj v_h p \quad \forall \, p \in \prob_{\polOrder-2}(e) \backslash \prob_{\edgeValueDofs}(e)
            \end{split}      
        \end{equation*}
        then $\edgeProj v_h \in \prob_{\polOrder}(e)$ and \eqref{eqn: edg proj} is satisfied. \red{To see this, we note that $\valueProj$ is linear and the other equations only involve linear systems of equations.}
        \item  Finally, if we define the normal edge projection $\edgeNormalProj v_h \in \prob_{l-1}(e)$ to be the unique solution of
        \begin{align*}
            \int_e \edgeNormalProj v_h p = \int_e \partial_n v_h p \quad \forall \, p \in \prob_{\edgeNormalDofs}(e),
            \quad \text{ and } \quad  
            \int_{e} \edgeNormalProj v_h p = \int_e \partial_n (\valueProj v_h )p \quad \forall \, p \in \prob_{\polOrder-1}(e) \backslash \prob_{\edgeNormalDofs}(e)
        \end{align*}
        then we satisfy \eqref{eqn: edge normal proj}.
        Note that we could replace the final constraint with
        \begin{align*}
            \int_{e} \edgeNormalProj v_h p &= \int_e \gradProj v_h \cdot n p \quad \forall \, p \in \prob_{\polOrder-1}(e) \backslash \prob_{\edgeNormalDofs}(e)
        \end{align*}
        since $\gradProj v_h$ does not depend on $\edgeNormalProj v_h$. This is what we use in our implementation.
    \end{itemize}    
    Note that these definitions also cover the case of the $C^0$ nonconforming space with $\edgeNormalDofs=-1$. The gradient projection in
    this case is identical to the one given in \cite{cangiani_conforming_2015} but we
    get a projection for the hessian as well.
\end{example}

We can finally use given \emph{dof compatible} projections $\valueProj,\edgeProj,$ and $\edgeNormalProj$ to define the local virtual element space on $K$.
\begin{definition}
    The \emph{local virtual element space} $\localVEMSpace$ is given as the following subset of the enlarged space.
    \begin{equation}\label{eqn: restricted space}
        \begin{split}
            \localVEMSpace := \big\{ v_h \in \enlargedVemSpace^K : \ (v_h - \valueProj v_h, p)_K &= 0 \quad \forall \, p \in \prob_{\polOrder}(K) \backslash \prob_{\innerDofs}(K),\\
            (\partial_n^j v_h - \Pi^e_j v_h , p)_e &= 0 \quad \forall \, p \in \prob_{\polOrder-2}(e) \backslash \prob_{d_j^e}(e) \text{ for } j=0,1 \big\}.
        \end{split}
    \end{equation}
\end{definition}
We now show that the subset of dofs $\originalDofs$ are unisolvent for our local VEM space $\localVEMSpace$.
\begin{lemma}
    The \strike{original} set of degrees of freedom $\originalDofs$ is unisolvent for $\localVEMSpace$.
\end{lemma}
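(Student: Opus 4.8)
The plan is to exploit the unisolvence of the extended degrees of freedom $\widetilde{\Lambda}^K$ over $\enlargedVemSpace^K$ (Lemma \ref{lemma: extended degrees of freedom unisolvent}) together with the fact that the constraints defining $\localVEMSpace$ pin down exactly those extended dofs that are not already contained in $\Lambda^K$. The argument is then purely linear algebra: one shows that $\localVEMSpace$ is, in the coordinates provided by $\widetilde{\Lambda}^K$, the graph of a linear map over the $\Lambda^K$-coordinates.

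First I would identify the ``extra'' degrees of freedom $\widetilde{\Lambda}^K \setminus \Lambda^K$. Since $\widetilde{M}^K = (0,-1,\polOrder-2,\polOrder-2,\polOrder)$ while $M^K = (0,-1,\edgeValueDofs,\edgeNormalDofs,\innerDofs)$ with $\edgeValueDofs,\edgeNormalDofs \le \polOrder-2$ and $\innerDofs \le \polOrder$, the extra dofs are precisely the interior moments of $v_h$ against $\prob_{\polOrder}(K)\setminus\prob_{\innerDofs}(K)$, the edge value moments against $\prob_{\polOrder-2}(e)\setminus\prob_{\edgeValueDofs}(e)$, and the edge normal moments against $\prob_{\polOrder-2}(e)\setminus\prob_{\edgeNormalDofs}(e)$, for each $e \subset \partial K$.

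The key observation I would then make is that definition \eqref{eqn: restricted space} forces each of these extra dofs to be a fixed linear combination of the original dofs $\Lambda^K$. Indeed, the constraint $(v_h - \valueProj v_h, p)_K = 0$ for $p \in \prob_{\polOrder}(K)\setminus\prob_{\innerDofs}(K)$ says that every extra interior moment of $v_h$ equals the corresponding moment of $\valueProj v_h$, and the two edge constraints ($j=0,1$) likewise equate every extra edge value, resp. normal, moment of $v_h$ with the corresponding moment of $\edgeProj v_h$, resp. $\edgeNormalProj v_h$. Because $\valueProj,\edgeProj,\edgeNormalProj$ are dof compatible (Definition \ref{defn: dof compatible}), each of these right-hand sides is a linear combination of $\Lambda^K(v_h)$. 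Here one must check that the test spaces appearing in \eqref{eqn: restricted space} match exactly the monomial degrees of the missing dofs, so that the constraints neither over- nor under-determine $\widetilde{\Lambda}^K \setminus \Lambda^K$; this bookkeeping is the only delicate point, and it is settled by the degree comparison of the previous paragraph.

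To conclude, write $E : \enlargedVemSpace^K \to \R^{\widetilde{\Lambda}^K}$ for the evaluation of the extended dofs, which is a linear isomorphism by Lemma \ref{lemma: extended degrees of freedom unisolvent}. Splitting the coordinates into the $\Lambda^K$-block and the extra block, the step above shows that $E(\localVEMSpace)$ is exactly the graph of a linear map sending the $\Lambda^K$-coordinates to the extra coordinates; projection onto the $\Lambda^K$-coordinates is a bijection on such a graph, so $v_h \mapsto \Lambda^K(v_h)$ is a bijection from $\localVEMSpace$ onto $\R^{\Lambda^K}$, which is precisely unisolvence. Equivalently, if every dof in $\Lambda^K$ vanishes then every extra dof vanishes as a linear combination of zeros, so all of $\widetilde{\Lambda}^K$ vanishes and $v_h \equiv 0$ by the extended unisolvence; the matching of dimensions then upgrades this injectivity to the required bijectivity.
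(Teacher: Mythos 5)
Your proposal is correct and follows essentially the same route as the paper: the constraints defining $\localVEMSpace$ force the extra dofs in $\widetilde{\Lambda}^K\setminus\Lambda^K$ to be linear combinations of $\Lambda^K$ via the dof-compatible projections, so vanishing of $\Lambda^K$ forces vanishing of all of $\widetilde{\Lambda}^K$ and hence $v_h\equiv 0$ by Lemma \ref{lemma: extended degrees of freedom unisolvent}. Your graph-of-a-linear-map formulation is a slightly more explicit way of handling the dimension count that the paper leaves implicit, but it is the same argument.
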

\begin{proof}
    Let $v_h \in \localVEMSpace$, and set the \strike{original} dofs $\originalDofs$ to zero. Then, noting that both the value and edge projection are computed using the dofs, it follows that $\valueProj v_h = 0, \edgeProj v_h = 0$, and $\edgeNormalProj v_h = 0$. From \eqref{eqn: restricted space}, we see that 
    \begin{align*}
        (v_h  - \valueProj v_h,p)_K &= (v_h,p)_K = 0 \quad \forall \, p \in \prob_{\polOrder}(K) \backslash \prob_{\innerDofs}(K),
    \intertext{and for $j=0,1,$}
        (\partial_n^j v_h - \Pi^e_j v_h , p)_e &= (\partial_n^j v_h,p)_e  = 0 \quad \forall \, p \in \prob_{\polOrder-2}(e) \backslash \prob_{d_j^e}(e), 
    \end{align*}
    i.e. the inner moments of order $\polOrder, \dots, \polOrder - ( \innerDofs +1)$ and the edge moments of order $ \polOrder -2, \dots, \polOrder - (\edgeValueDofs +1)$ of $v_h$ are also zero. Similarly, the higher order edge normal moments are also zero. So viewing $v_h$ as a function in the enlarged space, $v_h \in \localVEMSpace \subset \enlargedVemSpace^K$ it follows that $v_h \equiv 0 $ as a function in $\enlargedVemSpace^K$ with the extended degrees of freedom set to zero, using Lemma \ref{lemma: extended degrees of freedom unisolvent}. Therefore the \strike{original} dofs are unisolvent for $\localVEMSpace$.
\end{proof}

This leads us to a final crucial result of this subsection. We see in the next Lemma that by construction, our value, gradient, and hessian projections possess important $L^2$ projection properties. In particular, due to construction of the VEM space, the value projection is identical to the $L^2$ projection into the space of polynomials of degree $l$ for all VEM functions.

\begin{lemma}\label{lemma: l2 projections assumptions satisfied}
    If the value, edge, and edge normal projections are \emph{dof compatible}, 
    then for any $v_h \in \localVEMSpace$ it follows that, for $s=0,1,2$,
    \begin{align*}
        \Pi^K_s v_h = \cP^{\polOrder-s}_K (D^s v_h).
    \end{align*}
    \strike{Due to the dof compatibility, and the fact that $\prob_{\polOrder}(K)\subset \enlargedVemSpace^K$, it holds that $\prob_{\polOrder}(K) \subset \localVEMSpace$.}

    \strike{This implies that $\Pi^K_s p =  D^s p$ for $p \in \prob_{\polOrder}(K)$ for each $s=0,1,2$.}
\end{lemma}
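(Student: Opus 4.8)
The plan is to establish $\Pi^K_s v_h = \cP^{\polOrder-s}_K(D^s v_h)$ for $s=0,1,2$ in that order, so that each higher projection can be treated using the previously proved identity together with its definition via integration by parts. In every case I will verify the orthogonality relation that uniquely characterises the $L^2$ projection, namely $(\Pi^K_s v_h - D^s v_h, p)_K = 0$ for all polynomial tensors $p$ of degree $\polOrder-s$. The case $s=0$ is immediate: the dof-compatibility relation \eqref{eqn: value proj} gives $(\valueProj v_h - v_h, p)_K = 0$ for $p\in\prob_{\innerDofs}(K)$, and the first defining condition of $\localVEMSpace$ in \eqref{eqn: restricted space} supplies the same orthogonality against the complementary moments $\prob_{\polOrder}(K)\backslash\prob_{\innerDofs}(K)$. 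Together these yield orthogonality against all of $\prob_{\polOrder}(K)$, so uniqueness of the $L^2$ projection forces $\valueProj v_h = \cP^{\polOrder}_K v_h$.

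The key intermediate step, which I expect to be the main obstacle, is to upgrade the edge projection to an \emph{exact} identity on each edge: $\edgeProj v_h = v_h|_e$ for every $e\subset\partial K$. This is needed because \eqref{eqn: grad proj} tests $\edgeProj v_h$ against boundary data of degree up to $\polOrder-1$, whereas dof-compatibility and \eqref{eqn: restricted space} only match moments up to degree $\polOrder-2$. The argument runs as follows. Since $v_h\in\enlargedVemSpace^K$, we have $v_h|_e\in\prob_{\polOrder}(e)$, so the difference $d:=v_h|_e-\edgeProj v_h$ lies in $\prob_{\polOrder}(e)$. Combining \eqref{eqn: edg proj} with the $j=0$ condition of \eqref{eqn: restricted space} gives $(d,p)_e=0$ for all $p\in\prob_{\polOrder-2}(e)$, while the endpoint conditions in \eqref{eqn: edg proj} give $d(e^{\pm})=0$. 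Expanding $d$ in the Legendre basis of $\prob_{\polOrder}(e)$, orthogonality to $\prob_{\polOrder-2}(e)$ confines $d$ to the span of the degree-$(\polOrder-1)$ and degree-$\polOrder$ Legendre polynomials; using $L_n(\pm 1)=(\pm 1)^n$, the two endpoint conditions form an invertible $2\times 2$ system whose only solution is $d\equiv 0$. Hence $\edgeProj v_h = v_h|_e$, and in particular $\partial_s(\edgeProj v_h) = \partial_s v_h$ along each edge.

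With these facts the remaining cases reduce to integration by parts. For $s=1$ I integrate $\int_K \nabla v_h\cdot p$ by parts to obtain $-\int_K v_h\,\nabla\!\cdot p + \sum_{e}\int_e v_h\,(p\cdot n)$ for $p\in\prob_{\polOrder-1}(K)^2$; comparing with \eqref{eqn: grad proj}, the interior term matches because $\nabla\!\cdot p\in\prob_{\polOrder-2}(K)\subset\prob_{\polOrder}(K)$ and $\valueProj v_h=\cP^{\polOrder}_K v_h$, while the edge terms match because $\edgeProj v_h=v_h|_e$ exactly. This gives $(\gradProj v_h-\nabla v_h,p)_K=0$ on $\prob_{\polOrder-1}(K)^2$, i.e.\ $\gradProj v_h=\cP^{\polOrder-1}_K(\nabla v_h)$. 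For $s=2$ I integrate $\int_K D^2 v_h:p$ by parts componentwise, obtaining an interior term $-\int_K \nabla v_h\cdot(\nabla\!\cdot p)$ and a boundary term which, after splitting $\nabla v_h=(\partial_n v_h)n+(\partial_s v_h)\tau$, takes the form $\sum_e\int_e \partial_n v_h\,(n\otimes n:p)+\partial_s v_h\,(\tau\otimes n:p)$. The interior term matches the definition of $\hessProj$ because $\nabla\!\cdot p\in\prob_{\polOrder-3}(K)^2\subset\prob_{\polOrder-1}(K)^2$ and the $s=1$ identity applies; the normal boundary term matches because $n\otimes n:p\in\prob_{\polOrder-2}(e)$ and $\edgeNormalProj v_h$ agrees with $\partial_n v_h$ against $\prob_{\polOrder-2}(e)$; and the tangential boundary term matches because $\partial_s(\edgeProj v_h)=\partial_s v_h$. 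Hence $\hessProj v_h=\cP^{\polOrder-2}_K(D^2 v_h)$.

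Finally, for the consistency statement I note that any $q\in\prob_{\polOrder}(K)$ lies in $\enlargedVemSpace^K$ (its bilaplacian has degree $\polOrder-4$, its edge traces degree $\polOrder$, and $\Delta q$ restricted to an edge degree $\polOrder-2$), and that dof-compatibility makes $\valueProj q=q$, $\edgeProj q=q|_e$, $\edgeNormalProj q=\partial_n q|_e$, so the defining conditions of $\localVEMSpace$ in \eqref{eqn: restricted space} hold with zero residual; thus $\prob_{\polOrder}(K)\subset\localVEMSpace$. Applying the identity just proved and using that $D^s q\in\prob_{\polOrder-s}(K)$ is fixed by its own $L^2$ projection yields $\Pi^K_s q=\cP^{\polOrder-s}_K(D^s q)=D^s q$, which is the claimed polynomial consistency.
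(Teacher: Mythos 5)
Your proof is correct and follows essentially the same route as the paper: combine dof compatibility with the defining orthogonality conditions of $\localVEMSpace$ to get the $L^2$ property of $\valueProj$, establish $\edgeProj v_h = v_h|_e$ exactly, and then obtain the gradient and hessian identities by integration by parts. Your explicit Legendre-basis argument for why $\edgeProj v_h = v_h|_e$ (orthogonality to $\prob_{\polOrder-2}(e)$ plus the two endpoint conditions) is a welcome elaboration of a step the paper only asserts via ``polynomial exactness of $\edgeProj$ and the definition of the extended space''.
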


\begin{proof}
\strike{Using the dof compatibility for the value projection $\valueProj$,} \red{From Definition \ref{defn: dof compatible} it is clear that}
\begin{align*}
    \int_K \valueProj v_h p &= \int_K v_h p \quad \forall p \in \prob_{\innerDofs}(K) 
\intertext{while the definition of the space $\localVEMSpace$ \eqref{eqn: restricted space} leads to }
    \int_K  \valueProj v_h p &= \int_K v_h p \quad \forall p \in \prob_{\polOrder}(K) \backslash \prob_{\innerDofs}(K).
\end{align*}
Combining the two we obtain the stated $L^2$ projection property of $\valueProj$. Similar arguments are used for the other projections: due to the polynomial exactness of $\edgeProj$ and the definition of the extended space $\enlargedVemSpace$ we have $\edgeProj v_h = v_h|_e$  and therefore for a polynomial $p \in \prob_{\polOrder-1}(K)$,
\begin{align*}
    \int_K \gradProj v_h p = - \int_K \valueProj v_h \nabla p + \sum_{e \subset \partial K} \int_e  \edgeProj v_h n p
   = - \int_K v_h \nabla p + \sum_{e \subset \partial K} \int_e  v_h n p
\end{align*}
where we used the $L^2$ projection property of $\valueProj$. Applying integration by parts to the RHS completes the proof. Finally we have for the hessian projection using the results already proven and the dof compatibility of the edge normal projection, $\edgeNormalProj$,
\begin{align*}
    \int_K \hessProj v_h p &= - \int_K \gradProj v_h \nabla p + \sum_{e \subset \partial K } \big( \int_{e} \edgeNormalProj v_h n \otimes n p + \partial_s ( \edgeProj v_h ) \tau \otimes n p \big) \\
    &= - \int_K \nabla v_h \nabla p + \sum_{e \subset \partial K} \big( \int_e \partial_n v_h n \otimes n p + \partial_s v_h \tau \otimes n p \big) 
 \end{align*}
so that the result follows using integration by parts.
\end{proof}

\red{Due to the dof compatibility, and the fact that $\prob_{\polOrder}(K)\subset \enlargedVemSpace^K$, it holds that $\prob_{\polOrder}(K) \subset \localVEMSpace$. This implies that $\Pi^K_s p =  D^s p$ for $p \in \prob_{\polOrder}(K)$ for each $s=0,1,2$.}


\subsection{Global spaces and the discrete bilinear form}
We conclude with the definition of the global virtual element space and the discrete bilinear form. \strike{, ensuring that it satisfies the abstract assumptions given in}
We keep the presentation brief, since it follows the general construction of VEM found in the literature.

We define the \emph{global space} $\vemSpace$ by
\begin{align}\label{eqn: global VEM space}
    \vemSpace := \{ v_h \in \red{H^2(\cT_h)} : v_h|_K \in \localVEMSpace \quad \forall \, K \in \cT_h \}. 
\end{align}
\strike{Recall that $\HTwoNCSpace$ is the $H^2$ nonconforming space detailed in}

\begin{definition}\label{defn: H^{2,nc} conforming dof tuple}
    \red{A dof tuple is said to be \emph{$\HTwoNCSpace$ conforming} if the entries satisfy: $\vertexValueDofs = 0$, $\vertexDerivDofs = -1,$ $\edgeValueDofs \geq \polOrder-3$, $\edgeNormalDofs \geq \polOrder -2$, and $\innerDofs \geq \polOrder -4 $.}
    
    \red{Recall that $\HTwoNCSpace$ is the $H^2$ nonconforming space detailed in \ref{assumption: A2}. Observe that a VEM space $\vemSpace$ defined by an $\HTwoNCSpace$ conforming dof tuple satisfies ${\vemSpace \subset \HTwoNCSpace}$. }
\end{definition}

We extend Definition \ref{defn: local dofs general} to arrive at the global degrees of freedom.
\begin{definition}\label{defn: global dofs general}
    \emph{Global degrees of freedom} are given by the following. 
    \begin{enumerate}[label=(\emph{D}\arabic*)]
        \item The values $h_v^{j} D^{j} v_h$ for each internal vertex $v$ of $\cT_h$ for $j=0,1$. 
        \item The moments of $\partial_n^j v_h$ up to order $d_j^e$ for $j=0,1$, on each internal edge $e \in \mathcal{E}_h^{\text{int}}$ 
        \begin{align*}
            |e|^{-1+j }\int_e \partial_n^j v_h p \ \mathrm{d}s \quad \forall p \in \prob_{d_j^e}(e).
        \end{align*} 
        \item The moments of $v_h$ up to order $\innerDofs$ inside each $K \in \cT_h$
        \begin{align*}
            \frac{1}{|K|} \int_K v_h p \, \mathrm{d}x \quad \forall p \in \prob_{\innerDofs}(K).
        \end{align*}
    \end{enumerate}
\end{definition}

We set the local degrees of freedom which correspond to boundary vertices and boundary edges, $e \in \mathcal{E}_h^{bdry}$, to zero. Note that the global degrees of freedom are unisolvent - this follows from the unisolvency of the local degrees of freedom and the definition of the local spaces.  

Lastly, we define the discrete bilinear form.
\begin{definition}\label{eqn: discrete bilinear form}
    For any ${u_h, v_h \in \localVEMSpace}$, define the \emph{local discrete bilinear form} $a_h^K $ as 
    \begin{equation*}
        \begin{split}
            a_h^K (u_h,v_h) := &\int_K \kappa \hessProj u_h : \hessProj v_h + \int_K \beta \gradProj u_h \cdot \gradProj v_h + \int_K \gamma \valueProj u_h \valueProj v_h + S^K ( u_h - \valueProj u_h , v_h - \valueProj v_h ).
        \end{split}    
    \end{equation*}
    We take the \emph{stabilization term} $S^K(\cdot,\cdot)$ to be a symmetric, positive definite bilinear form satisfying
    \begin{align}\label{eqn: S^K bounded}
        c_* a^K(v_h,v_h) \leq S^K(v_h,v_h) \leq c^* a^K(v_h,v_h)
    \end{align}
    for constants $c_*$, $c^*$ independent of $h$ and $K$.
\end{definition}
It is clear that the choice of $a^K_h$ in Definition \ref{eqn: discrete bilinear form} with $S^K$ satisfying \eqref{eqn: S^K bounded}, will satisfy the stability property from \ref{assumption: stability property} see e.g., \cite{beirao_da_veiga_basic_2013,cangiani_conforming_2015}. 
We take the standard choice for $S^K$ based on the scalar product of the local degrees of freedom scaled appropriately with some element wise constant approximations of the coefficients $\bar \kappa_K$, $\bar \beta_K$, and $\bar \gamma_K$. \red{In our implementation we use the average of each coefficient $\kappa, \beta,$ and $\gamma$ evaluated at the vertices of an element $K$ to calculate $\bar{\kappa}_K, \bar{\beta}_K,$ and $\bar{\gamma}_K$. Then $S^K$ is given by:}
\begin{align*}
    S^K( u_h , v_h) := \big( \bar \kappa_K h_K^{-2} + \bar \beta_K + \bar \gamma_K h_K^2 \big) \sum_{j=1}^{N_{dof}} dof_j(u_h) dof_j(v_h).
\end{align*}
Then, as in \cite{cangiani_conforming_2015}, it follows that this choice of $S^K$ satisfies \eqref{eqn: S^K bounded} and therefore \ref{assumption: stability property} holds in Assumption \ref{assumptions: discrete assumptions}. 
\begin{remark}\label{remark: polynomial consistency}
    Furthermore, notice that due to Lemma \ref{lemma: l2 projections assumptions satisfied}, it holds that $\valueProj p = p$ for all $p \in \prob_{\polOrder}(K)$ and therefore the stabilization term vanishes when one of $u_h$ or $v_h$ is a polynomial of degree $\polOrder$. The discrete bilinear form therefore possesses the standard \emph{polynomial consistency} property.
\end{remark}
\section{Error analysis}\label{section: error analysis}
In this section we collect all the building blocks needed to prove a general convergence result in the energy norm, which is presented in Theorem \ref{thm: energy norm}.  
\red{Throughout this section we now assume that our dof tuple is $\HTwoNCSpace$ conforming (recall Definition~\ref{defn: H^{2,nc} conforming dof tuple}) and so \ref{assumption: A2} is satisfied in Assumption~\ref{assumptions: discrete assumptions}, meaning that  Theorem~\ref{thm: a-priori error bound energy norm} holds.}
Proving convergence involves bounding all of the terms in the Strang-type estimate presented in Theorem~\ref{thm: a-priori error bound energy norm}.
In the following we consider each term from \eqref{eqn: a-priori bound} in the order in which they appear, starting with the
approximation error. 

\subsection{Interpolation error estimate}
The following estimate for the interpolation error, the first term in equation \eqref{eqn: a-priori bound}, is simply a consequence of standard scaling arguments.

\begin{theorem}[Approximation error estimate]\label{thm: interpolation error estimate}
    Let \ref{mesh} - \ref{assump: star shaped wrt a ball} hold, defined in Assumptions \ref{assumptions: discrete assumptions} and \ref{assumption: mesh regularity}. 
    \red{If} \strike{Let} $u \in H^{\polOrder+1}(\Omega)$ \red{is} \strike{be} the solution to the continuous problem \eqref{eqn: cts problem}. 
    Then, it follows that 
    \begin{align*}
            \inf_{v_h \in \vemSpace} \vertiii{u-v_h}_h 
            \leq C\big( h^{\polOrder -1} \|\kappa\|_{L^{\infty}}^{\frac{1}{2}} + h^{ \polOrder } \|\beta\|_{L^{\infty}}^{\frac{1}{2}}  + h^{\polOrder+1} \|\gamma\|_{L^{\infty}}^{\frac{1}{2}} 
            \big) |u|_{\polOrder+1}.
    \end{align*}
    for a constant $C$ independent of $h$.
\end{theorem}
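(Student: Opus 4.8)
The plan is to bound the infimum by evaluating the energy error at a single well-chosen competitor, namely the virtual element interpolant $\globalInterpolation u \in \vemSpace$ obtained by matching all global degrees of freedom of $u$. Since $u \in H^{\polOrder+1}(\Omega)$ with $\polOrder \geq 2$, the two-dimensional Sobolev embedding $H^{\polOrder+1}(\Omega) \hookrightarrow C^1(\overline{\Omega})$ guarantees that the vertex values and the edge and interior moments defining $\globalInterpolation u$ in Definition \ref{defn: global dofs general} are all well-defined, so $\globalInterpolation u$ makes sense. First I would decouple the three weighted contributions to the energy seminorm: pulling the coefficients out in $L^\infty$ gives, element by element,
\[
    \vertiii{u-\globalInterpolation u}_K^2 \leq \|\kappa\|_{L^\infty} \, |u-\globalInterpolation u|_{2,K}^2 + \|\beta\|_{L^\infty} \, |u-\globalInterpolation u|_{1,K}^2 + \|\gamma\|_{L^\infty} \, \|u-\globalInterpolation u\|_{0,K}^2,
\]
where I used $\kappa D^2 w : D^2 w \leq \|\kappa\|_{L^\infty}|D^2 w|^2$ and the analogous pointwise bounds for the lower-order terms.

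The next ingredient is the local interpolation estimate
\[
    |u - \globalInterpolation u|_{s,K} \lesssim h_K^{\polOrder+1-s} \, |u|_{\polOrder+1,K}, \qquad s = 0,1,2,
\]
which follows from standard scaling arguments analogous to those underlying Theorem \ref{thm: interpolation estimates}: because the interpolation operator reproduces $\prob_{\polOrder}(K)$ on each element (the dofs of a degree-$\polOrder$ polynomial are matched exactly), the error $u - \globalInterpolation u$ can be controlled by a Bramble--Hilbert / polynomial-invariance argument after mapping to a reference configuration, with the mesh regularity Assumption \ref{assumption: mesh regularity} (edge-length comparability \ref{assump: mesh 1} and star-shapedness \ref{assump: star shaped wrt a ball}) ensuring that the scaling constants are uniform in $K$. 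Substituting this into the displayed bound yields
\[
    \vertiii{u-\globalInterpolation u}_K^2 \lesssim \big( \|\kappa\|_{L^\infty} h_K^{2(\polOrder-1)} + \|\beta\|_{L^\infty} h_K^{2\polOrder} + \|\gamma\|_{L^\infty} h_K^{2(\polOrder+1)} \big) |u|_{\polOrder+1,K}^2.
\]

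Finally I would sum over all $K \in \cT_h$, use $h_K \leq h$ together with the additivity $\sum_{K}|u|_{\polOrder+1,K}^2 = |u|_{\polOrder+1}^2$, take the square root of the resulting bound on $\vertiii{u-\globalInterpolation u}_h^2$, and apply subadditivity of the square root, $\sqrt{a+b+c} \leq \sqrt{a}+\sqrt{b}+\sqrt{c}$, to split the three powers of $h$ and obtain the stated estimate with the half-powers of the coefficient norms. Since $\globalInterpolation u \in \vemSpace$, the infimum on the left-hand side is dominated by this quantity. The only genuinely technical point is the local interpolation estimate for $s=0,1,2$ simultaneously; everything else is bookkeeping. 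I expect this to be the main obstacle, both in checking that $\globalInterpolation u$ is well-defined (which is why the $C^1$ embedding and hence $\polOrder\geq 2$ is needed) and in verifying that the scaling constants remain independent of $K$ under the star-shapedness assumption --- though, as the statement indicates, this is by now a standard argument in the virtual element literature.
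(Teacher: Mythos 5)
Your proposal is correct and follows exactly the route the paper intends: the paper gives no written proof of this theorem, stating only that it is ``simply a consequence of standard scaling arguments,'' and your argument (take the dof-interpolant $\globalInterpolation u$ as competitor, pull the coefficients out in $L^\infty$, apply the local $O(h_K^{\polOrder+1-s})$ interpolation estimates for $s=0,1,2$, sum and split the square root) is precisely that standard argument written out. You also correctly identify the one genuinely nontrivial ingredient --- the local interpolation estimate for the virtual (non-polynomial) interpolant, which rests on polynomial reproduction plus a dof-stability/Bramble--Hilbert argument under the mesh regularity assumptions --- which the paper likewise delegates to the literature.
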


\subsection{Load term}
The next term to appear in the a priori bound \eqref{eqn: a-priori bound} is an error estimate for the load term. To treat this term, following the methods in \cite{antonietti_fully_2018,cangiani_conforming_2015}, we define $f_h$ to be the piecewise $L^2$ projection of $f$ on $\cT_h$, $f_h := \cP^{\polOrder}_K f$. Using Lemma \ref{lemma: l2 projections assumptions satisfied} we observe that for $w_h \in\localVEMSpace$,
\begin{align*}
    \langle f_h, w_h \rangle &=
    \sum_{K \in \cT_h} (\cP^{\polOrder}_K f,w_h)_K
    =
    \sum_{K \in \cT_h} (f,\cP^{\polOrder}_K w_h)_K
    =
    \sum_{K \in \cT_h} (f,\valueProj w_h)_K.
\end{align*}
Consequently, the right hand side of the discrete variational problem \eqref{eqn: discrete problem} is computable. The following estimate now follows easily using for example, the method in \cite{antonietti_fully_2018}. 
\begin{lemma}\label{lemma: load term estimate}
    For $\polOrder \geq 2$, let $s$ be an integer with $f \in H^{s}(\Omega)$ and define $r:= \min(s-1,\polOrder)$. Then, the following estimate holds
    \begin{align}\label{eqn: load term}
        | \langle f_h, w_h \rangle - (f,w_h) | 
        \leq C h^{r+2} |f|_{r+1} |w_h|_{1,h}
    \end{align}
    for a constant $C$ independent of $h$.
\end{lemma}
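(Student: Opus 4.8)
The plan is to start from the identity for $\langle f_h, w_h\rangle$ established immediately before the statement. Since $f_h|_K := \cP^{\polOrder}_K f$, we have $\langle f_h, w_h\rangle = \sum_{K\in\cT_h}(\cP^{\polOrder}_K f, w_h)_K$, so the quantity to be bounded is
\[
\langle f_h, w_h\rangle - (f, w_h) = \sum_{K\in\cT_h}(\cP^{\polOrder}_K f - f, w_h)_K,
\]
and the whole estimate reduces to controlling each local contribution $(\cP^{\polOrder}_K f - f, w_h)_K$. (Note that Lemma \ref{lemma: l2 projections assumptions satisfied} is only needed for the computability remark preceding the lemma; for the \emph{estimate} itself the above identity is all that is required.)

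The key observation, which is what buys the extra power of $h$ beyond a naive application of Cauchy--Schwarz, is that $\cP^{\polOrder}_K f - f$ is $L^2(K)$-orthogonal to $\prob_{\polOrder}(K)$, and in particular to the constants $\prob_0(K)$. Hence I may subtract the local mean $\cP^0_K w_h$ of $w_h$ at no cost,
\[
(\cP^{\polOrder}_K f - f, w_h)_K = (\cP^{\polOrder}_K f - f,\, w_h - \cP^0_K w_h)_K,
\]
and then Cauchy--Schwarz gives $|(\cP^{\polOrder}_K f - f, w_h)_K| \le \| f - \cP^{\polOrder}_K f\|_{0,K}\, \| w_h - \cP^0_K w_h\|_{0,K}$.

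I would then apply the projection error bounds of Theorem \ref{thm: interpolation estimates} to each factor. For the first, the case $m=r+1$ (admissible since $r=\min(s-1,\polOrder)$ gives $1\le r+1\le \polOrder+1$) with $s=0$ yields $\| f - \cP^{\polOrder}_K f\|_{0,K} \lesssim h_K^{r+1} |f|_{r+1,K}$; for the second, the special case of the same theorem with projection degree $0$, $m=1$, $s=0$ gives $\|w_h - \cP^0_K w_h\|_{0,K} \lesssim h_K\, |w_h|_{1,K}$. Multiplying produces the local bound $\lesssim h_K^{r+2}\,|f|_{r+1,K}\,|w_h|_{1,K}$. Summing over $K$, using $h_K\le h$, and applying the discrete Cauchy--Schwarz inequality over the elements gives
\[
\Big| \sum_{K\in\cT_h} (\cP^{\polOrder}_K f - f, w_h)_K \Big| \lesssim h^{r+2} \Big(\sum_{K\in\cT_h} |f|_{r+1,K}^2\Big)^{\frac12}\Big(\sum_{K\in\cT_h} |w_h|_{1,K}^2\Big)^{\frac12}.
\]
Since $f\in H^{s}(\Omega)$ globally and $r+1\le s$, the first factor collapses to the full seminorm $|f|_{r+1}$, while the second is by definition $|w_h|_{1,h}$, which closes the estimate.

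The step requiring the most care is the orthogonality argument: without subtracting $\cP^0_K w_h$ one obtains only the suboptimal order $h^{r+1}$, so the crux is to exploit that $f-\cP^{\polOrder}_K f$ annihilates constants (indeed all of $\prob_\polOrder(K)$). The remaining points are routine bookkeeping, namely verifying the admissibility of the index $r+1$ in Theorem \ref{thm: interpolation estimates} and that the broken seminorm $\big(\sum_K |f|_{r+1,K}^2\big)^{1/2}$ coincides with the global seminorm $|f|_{r+1}$, both of which follow from $r=\min(s-1,\polOrder)$.
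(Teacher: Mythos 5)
Your proposal is correct and follows essentially the same route as the paper's proof: the identity $\langle f_h,w_h\rangle-(f,w_h)=\sum_K(\cP^{\polOrder}_K f-f,w_h)_K$, insertion of $\cP^0_K w_h$ by orthogonality, Cauchy--Schwarz, and the projection error bounds of Theorem \ref{thm: interpolation estimates} giving $h^{r+1}|f|_{r+1}\cdot h\,|w_h|_{1,h}$. Your write-up is in fact slightly more explicit than the paper's (discrete Cauchy--Schwarz over elements, admissibility of the index $r+1$), but there is no difference in substance.
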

\begin{proof}
    We can show the following using the bounds of Theorem \ref{thm: interpolation estimates}
    \begin{align*}
        \big| \langle f_h, w_h \rangle - (f,w_h) \big|
        =& \bigg| \sum_{K \in \cT_h} \int_K \big( \mathcal{P}^{\polOrder}_K  f - f \big) w_h \bigg|
        = \bigg| \sum_{K \in \cT_h} \int_K \big( \mathcal{P}^{\polOrder}_K  f - f \big) \big( w_h - \mathcal{P}_K^0 w_h \big) \bigg| \\
        \leq& \sum_{K \in \cT_h} \| \mathcal{P}^{\polOrder}_K  f - f  \|_{0,K} \|  w_h - \mathcal{P}_K^0 w_h \|_{0,K}
        \leq 
        C h^{r+1} |f|_{r+1} h^1 |w_h|_{1,h}, 
    \end{align*}
    hence \eqref{eqn: load term} holds, as required.
\end{proof}

\subsection{Nonconformity error}
We now turn our attention to the next term in the a priori bound \eqref{eqn: a-priori bound}, the \emph{nonconformity error} $\mathcal{N}(u,w_h)$. 
From now on we assume that $\polOrder \geq 2$. \red{Recall that our dof tuple is $\HTwoNCSpace$ conforming and as per Definition~\ref{defn: H^{2,nc} conforming dof tuple},} the values in our degrees of freedom tuple satisfy the following $\vertexValueDofs = 0$, $\vertexDerivDofs = -1,$ ${\edgeValueDofs \geq \polOrder-3}$, $\edgeNormalDofs \geq \polOrder -2$, and $\innerDofs \geq \polOrder -4 $. 
Recall that as a consequence of this for $w_h \in \vemSpace$, and for any edge $e$ in the grid
\begin{align}
    \int_e [ \, w_h \, ] p \, \mathrm{d}s = 0 \quad \forall \, p \in \prob_{\polOrder-3}(e), \label{eqn: jump of wh}\\
    \int_e [ \, \partial_n w_h \, ] p \, \mathrm{d}s = 0 \quad \forall \, p \in \prob_{\polOrder-2}(e). \label{eqn: jump of normal wh}
\end{align}
Applying integration by parts gives us, for any $q \in \prob_{\polOrder-2}(e)$,
\begin{align}\label{eqn: jump of tangential w_h}
    \int_e [\, \partial_s w_h \, ] q \, \mathrm{d}s &= 0.   
\end{align}
This follows since the jump is zero at the vertices on the edges and $\partial_s q \in \prob_{\polOrder-2 -1}(e) = \prob_{\polOrder-3}(e)$ which means that we can apply \eqref{eqn: jump of wh}. 

\begin{lemma}[Nonconformity error]\label{lemma: nonconformity proof}
    We assume that the coefficients satisfy $\kappa \in W^{2,\infty}(\Omega)$ and $\beta \in W^{1,\infty}(\Omega)$. 
    Assume that the solution $u$ to \eqref{eqn: cts problem} satisfies $u \in H^4(\Omega)$.
    Then, for $w_h \in \vemSpace$ the nonconformity error $\mathcal{N}(u,w_h)$ satisfies 
    \begin{align}
        \begin{split}
            \mathcal{N}(u,w_h) = \ & \sum_{e \in \mathcal{E}_h} \Big\{ 
                \int_e \kappa \big( \Delta u - \partial_{ss} u \big) [ \partial_n w_h ] 
                + 
                \int_e \kappa \partial_{ns} u \, [ \partial_s w_h ]     
                \\
                &+ \int_e \big( \partial_{ss} u \partial_n \kappa - \partial_{ns} u \partial_s \kappa + \beta \partial_n u - \partial_n (\kappa \Delta u)  \big) [ \,  w_h \, ] 
                \Big\}.
                \label{eqn: lemma nonconformity error}
            \end{split}
        \end{align}

\end{lemma}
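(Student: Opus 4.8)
The plan is to reduce everything to element-wise integration by parts. Since $w_h\in\vemSpace$ is only broken-$H^2$ and not in $H^2_0(\Omega)$, I first interpret the bilinear form in $\mathcal{N}(u,w_h)=a(u,w_h)-(f,w_h)$ as the sum of its local contributions, $a(u,w_h)=\sum_{K\in\cT_h}a^K(u,w_h)$. On each element the integrands are smooth enough to manipulate, because $u\in H^4(\Omega)$ and the coefficients satisfy $\kappa\in W^{2,\infty}(\Omega)$, $\beta\in W^{1,\infty}(\Omega)$.

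First I would integrate by parts twice in the Hessian term $\int_K\kappa D^2u:D^2w_h$ and once in $\int_K\beta Du\cdot Dw_h$, moving all derivatives off $w_h$ onto the smooth data. This produces on each $K$ a volume integral $\int_K\big(\sum_{i,j}\partial_{ij}(\kappa\partial_{ij}u)-\sum_i\partial_i(\beta\partial_iu)+\gamma u\big)w_h$ together with boundary integrals on $\partial K$. Summing the volume integrals over all elements and invoking the strong form \eqref{eqn: strong form} (valid since $u\in H^4$) gives exactly $(f,w_h)$, cancelling the $-(f,w_h)$ in $\mathcal{N}$. Hence $\mathcal{N}(u,w_h)$ equals the sum over $\partial K$ of the boundary terms $\int_{\partial K}\kappa(D^2u\,n)\cdot\nabla w_h-\int_{\partial K}\big(n\cdot\nabla\cdot(\kappa D^2u)\big)w_h+\int_{\partial K}\beta\,\partial_n u\,w_h$.

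Next I would rewrite each boundary integrand in the local normal/tangential frame $\{n,\tau\}$ of the edge. Decomposing $\nabla w_h=\partial_n w_h\,n+\partial_s w_h\,\tau$ turns the first term into $\kappa\partial_{nn}u\,\partial_n w_h+\kappa\partial_{ns}u\,\partial_s w_h$. For the second term I would use that every edge of a simple polygon (Assumption \ref{mesh}) is straight, so $n$ and $\tau$ are constant along $e$; a short computation in the local frame then yields the pointwise identity $n\cdot\nabla\cdot(\kappa D^2u)=\partial_n(\kappa\partial_{nn}u)+\partial_s(\kappa\partial_{ns}u)$. Collecting all terms carrying $w_h$, I would then simplify the coefficient: writing $\partial_{nn}u=\Delta u-\partial_{ss}u$ and using that the third derivatives of $u$ commute along a straight edge (so $\partial_n\partial_{ss}u=\partial_s\partial_{ns}u$), the combination $-\partial_n(\kappa\partial_{nn}u)-\partial_s(\kappa\partial_{ns}u)+\beta\partial_n u$ reduces to $\partial_{ss}u\,\partial_n\kappa-\partial_{ns}u\,\partial_s\kappa+\beta\partial_n u-\partial_n(\kappa\Delta u)$, which is precisely the coefficient of $[w_h]$ in \eqref{eqn: lemma nonconformity error}.

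Finally I would convert $\sum_{K}\int_{\partial K}$ into a sum over edges $e\in\mathcal{E}_h$. On an interior edge shared by $K^{\pm}$ the traces of $\kappa$, $\beta$, $u$ and all derivatives of $u$ are single-valued (since $u\in H^4\subset C^1$ and the coefficients are continuous), whereas the outward normal, and hence the $\partial_n$ and $\partial_s$ factors, reverse sign from $K^+$ to $K^-$; tracking these signs shows that the two element contributions combine into the jumps $[\partial_n w_h]$, $[\partial_s w_h]$ and $[w_h]$ of the three terms. On a boundary edge only one element contributes and the jump reduces to the trace, in agreement with the convention $[w_h]=w_h|_e$. Assembling the three resulting edge integrals gives \eqref{eqn: lemma nonconformity error}. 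I expect the main obstacle to be the boundary-term bookkeeping of the third and fourth paragraphs: establishing the straight-edge identity for $n\cdot\nabla\cdot(\kappa D^2u)$ in the local frame, verifying the cancellation of the genuine third-order term of $u$ in the algebraic reduction, and carefully tracking the normal-orientation signs so that every contribution assembles into the correct jump.
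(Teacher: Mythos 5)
Your proposal is correct and follows essentially the same route as the paper: element-wise integration by parts to cancel the volume terms against $(f,w_h)$ via the strong form, rewriting the remaining $\partial K$ integrals in the normal/tangential frame (your local-frame identity for $n\cdot\nabla\cdot(\kappa D^2u)$ and the cancellation of the third-order terms is exactly the content of the paper's identities \eqref{eqn: nonconf bdy terms 1}--\eqref{eqn: nonconf bdy terms 2}, which it calls ``straightforward albeit tedious''), and then assembling the element boundary contributions into edge jumps. The only cosmetic difference is that you integrate by parts twice on the Hessian term at once, whereas the paper does one integration by parts and absorbs the second into its expression for $(f,w_h)$.
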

\begin{proof}
    Using integration by parts, we can express the hessian and gradient terms as follows 
    \begin{align}
        \int_K \kappa D^2 u : D^2 w_h 
        \label{eqn: hessian term nonconf error}
        &=
        -\int_K \sum_{i,j=1}^2 \partial_j (\kappa \partial_{ij} u) \partial_i w_h + \int_{\partial K} \sum_{i,j=1}^2 \kappa \partial_{ij} u \partial_i w_h n_j
        \\
        \int_K \beta Du \cdot Dw_h &= - \int_K D \cdot (\beta Du) w_h + \int_{\partial K} \beta \partial_n u w_h.
        \label{eqn: grad term nonconf error}
    \end{align}
    Since we assume $u \in H^4(\Omega)$ we use \eqref{eqn: strong form} and an application of integration by parts to see that
    \begin{align*}
        (f,w_h) 
        = \sum_{K \in \cT_h}
        - \int_{K} \sum_{i,j=1}^2 \partial_j (\kappa \partial_{ij} u) \partial_i w_h
        +
        \int_{\partial K} \sum_{i,j=1}^2 \partial_j (\kappa \partial_{ij}u) w_h n_i
        - \int_{K} D \cdot (\beta D u)w_h + \int_{K} \gamma u w_h.
    \end{align*}
    Therefore the nonconformity error is equal to 
    \begin{align}\label{eqn: new nonconf error mid proof}
        \mathcal{N}(u,w_h) = a(u,w_h) - (f,w_h)
        =
        \sum_{K \in \cT_h} \Big\{
        \int_{\partial K} \sum_{i,j=1}^2 \kappa \partial_{ij} u \partial_i w_h n_j
        -
        \int_{\partial K} \sum_{i,j=1}^2 \partial_j (\kappa \partial_{ij}u) w_h n_i
        +
        \int_{\partial K} \beta \partial_n u w_h
        \Big\}.
    \end{align}

    We also use the following identities for rewriting the boundary terms in a way that is useful later on in Theorem \ref{thm: non conformity error without (H4)}.
    \begin{align}
        \int_{\partial K} \sum_{i,j=1}^2 \kappa \partial_{ij} u \partial_i w_h n_j 
        &= 
        \int_{\partial K} \kappa \big( (\Delta u - \partial_{ss} u ) \partial_n w_h + \partial_{ns} u \partial_s w_h \big ) 
        \label{eqn: nonconf bdy terms 1}
        \\
        \int_{\partial K} \sum_{i,j=1}^2 \partial_j (\kappa \partial_{ij} u) w_h n_i 
        &=
        \int_{\partial K} \big( \partial_{ns} u \partial_s \kappa - \partial_{ss} u \partial_n \kappa  + \partial_n (\kappa \Delta u) \big) w_h
        \label{eqn: nonconf bdy terms 2}
    \end{align}
    It is straightforward albeit tedious to show that \eqref{eqn: nonconf bdy terms 1} and \eqref{eqn: nonconf bdy terms 2} hold. It is now clear that the result \eqref{eqn: lemma nonconformity error} follows from these expressions and equation \eqref{eqn: new nonconf error mid proof}.
\end{proof}

The next corollary looks at how \eqref{eqn: lemma nonconformity error} simplifies when our VEM space is $C^0$ conforming.

\begin{corollary}
    Under the assumptions of Lemma \ref{lemma: nonconformity proof} and assuming that $\vemSpace \subset H^1_0(\Omega)$, it follows that 
    \begin{align}
        \mathcal{N}(u,w_h) = \sum_{ e \in \mathcal{E}_h } \int_e \kappa \big( \Delta u - \partial_{ss} u \big) [ \, \partial_n w_h \, ] \ \mathrm{d}s. 
        \label{eqn: lemma nonconformity H^1_0}
    \end{align}    
\end{corollary}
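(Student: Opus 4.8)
The plan is to take the full expression for the nonconformity error established in \eqref{eqn: lemma nonconformity error} and show that the extra hypothesis $\vemSpace \subset H^1_0(\Omega)$ forces every term except the first to vanish. The single observation driving everything is that a function $w_h \in \vemSpace \subset H^1_0(\Omega)$ has single-valued traces across every internal edge and a vanishing trace on every boundary edge, whereas its normal derivative $\partial_n w_h$ is not controlled. So I would organise the argument around the two quantities $[\,w_h\,]$ and $[\,\partial_s w_h\,]$, both of which I claim are zero on every $e \in \mathcal{E}_h$.

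First I would dispatch the jump term $[\,w_h\,]$. For an internal edge $e \subset \partial K^+ \cap \partial K^-$, the $H^1(\Omega)$ regularity forces the two one-sided traces to coincide, so $[\,w_h\,] = w_h^+ - w_h^- = 0$; for a boundary edge the $H^1_0(\Omega)$ condition gives $w_h|_e = 0$, hence $[\,w_h\,] = 0$ there as well. This annihilates the entire last line of \eqref{eqn: lemma nonconformity error}, i.e.\ the contribution multiplied by $[\,w_h\,]$ containing $\partial_{ss} u\, \partial_n \kappa$, $\partial_{ns} u\, \partial_s \kappa$, $\beta \partial_n u$, and $\partial_n(\kappa \Delta u)$.

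Next I would treat the tangential jump. Since $\partial_s w_h = \nabla w_h \cdot \tau$ is a derivative taken along the edge, it depends only on the trace of $w_h$ on $e$. Differentiating the identity $[\,w_h\,] = 0$ tangentially therefore yields $[\,\partial_s w_h\,] = \partial_s [\,w_h\,] = 0$ on internal edges, while on boundary edges $w_h|_e = 0$ gives $\partial_s w_h|_e = 0$ directly. Consequently the second term $\int_e \kappa\, \partial_{ns} u\, [\,\partial_s w_h\,]$ vanishes on every edge. What remains of \eqref{eqn: lemma nonconformity error} is exactly $\sum_{e \in \mathcal{E}_h} \int_e \kappa(\Delta u - \partial_{ss} u)[\,\partial_n w_h\,]$, which is the claimed identity \eqref{eqn: lemma nonconformity H^1_0}; the normal jump $[\,\partial_n w_h\,]$ is left untouched, reflecting that the space is only $C^0$ conforming rather than $C^1$ conforming.

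The only point requiring a little care — and what I would flag as the main technical step rather than a genuine obstacle — is the commutation $[\,\partial_s w_h\,] = \partial_s[\,w_h\,]$, which is a statement about traces of $H^2(\cT_h)$ functions onto the one-dimensional edge $e$. Since the tangential derivative acts entirely within $e$, it commutes with the trace operator, and the equality holds in the appropriate sense; I would state this explicitly so that the tangential term can be discarded rigorously. Everything else is a direct substitution of $[\,w_h\,]=0$ and $[\,\partial_s w_h\,]=0$ into \eqref{eqn: lemma nonconformity error}.
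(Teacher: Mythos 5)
Your argument is correct and is exactly the (implicit) reasoning the paper intends: the corollary is stated without proof as an immediate consequence of Lemma \ref{lemma: nonconformity proof}, obtained by noting that $\vemSpace\subset H^1_0(\Omega)$ forces $[\,w_h\,]=0$ on every edge and hence also $[\,\partial_s w_h\,]=\partial_s[\,w_h\,]=0$, killing the second and third terms of \eqref{eqn: lemma nonconformity error}. Your explicit remark on the commutation of the tangential derivative with the trace is a sensible point of care but introduces nothing beyond the paper's intended argument.
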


We now bound each term in \eqref{eqn: lemma nonconformity error} to achieve an error estimate for the nonconformity error. This essentially involves the jump properties of the VEM space \eqref{eqn: jump of wh} - \eqref{eqn: jump of tangential w_h} as well as standard interpolation estimates detailed in Theorem \ref{thm: interpolation estimates}. 

\begin{theorem}[Nonconformity error bound]\label{thm: non conformity error without (H4)}
    Let \ref{mesh} - \ref{assump: star shaped wrt a ball} hold. 
    Assume that the solution $u$ to \eqref{eqn: cts problem} satisfies $u \in H^{\polOrder+1}(\Omega)$ and assume that the coefficients satisfy $\kappa \in W^{\polOrder-1,\infty}(\Omega)$ and $\beta \in W^{\polOrder-2,\infty}(\Omega)$. 
    Then, for $\polOrder \geq 3$ the nonconformity error satisfies the following estimate
    \begin{align}
        |\mathcal{N}(u,w_h)| \leq 
        \begin{cases}
            C h^{l-1} \big(  \| \kappa \|_{W^{\polOrder-1,\infty}} ( |u|_{\polOrder+1} + |u|_{\polOrder} ) + \|\beta\|_{W^{\polOrder-2,\infty}} |u|_{\polOrder-1} )| w_h|_{2,h}, 
            \quad &\text{ if } \vemSpace \not \subset H^1_0(\Omega),
            \\
            C h^{\polOrder -1} \| \kappa \|_{W^{\polOrder-1,\infty}} |u|_{\polOrder+1} |w_h|_{2,h}, \quad &\text{ if } \vemSpace \subset H^1_0(\Omega),
        \end{cases}
        \label{eqn: nonconformity error bound}
    \end{align}
    for any $w_h \in \vemSpace$, for a constant $C$ independent of $h$.
    For $\polOrder=2$, the nonconformity error satisfies 
    \begin{align}\label{eqn: l=2 nonconformity error bound}
         | \mathcal{N}(u,w_h)| \leq 
        \begin{cases}
            C h \big( ( \|\kappa\|_{W^{1,\infty}} |u|_{3}+ \| \beta\|_{L^{\infty}} |u|_{1} ) |w_h|_{2,h} 
            + (\|\gamma\|_{L^{\infty}} |u|_0  + |f|_0 ) |w_h|_{1,h} \big), 
            \quad & \text{ if } \vemSpace \not \subset H^1_0(\Omega),
            \\
            C h \|\kappa\|_{W^{1,\infty}} |u|_{3} |w_h|_{2,h}, \quad &\text{ if } \vemSpace \subset H^1_0(\Omega),
        \end{cases}
    \end{align}
    for a constant $C$ independent of $h$.
\end{theorem}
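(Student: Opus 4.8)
The plan is to treat the two regimes $\polOrder\ge 3$ and $\polOrder=2$ separately, since only the former allows the use of the $H^4$-based representation from Lemma~\ref{lemma: nonconformity proof}. For $\polOrder\ge 3$ the hypothesis $u\in H^{\polOrder+1}(\Omega)\subset H^4(\Omega)$ holds, so I would start directly from \eqref{eqn: lemma nonconformity error} and estimate the three families of edge integrals appearing there: the normal-derivative jump terms $\int_e \kappa(\Delta u-\partial_{ss}u)[\partial_n w_h]$, the tangential-derivative jump term $\int_e\kappa\,\partial_{ns}u\,[\partial_s w_h]$, and the value-jump term $\int_e g_C\,[w_h]$ with $g_C=\partial_{ss}u\,\partial_n\kappa-\partial_{ns}u\,\partial_s\kappa+\beta\,\partial_n u-\partial_n(\kappa\Delta u)$.

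For each family I would exploit the matching orthogonality of the jump built into $\vemSpace$. Since $\int_e[\partial_n w_h]p=0$ for $p\in\prob_{\polOrder-2}(e)$ by \eqref{eqn: jump of normal wh}, the coefficient $\kappa(\Delta u-\partial_{ss}u)$ may be replaced by its error against $\cP^{\polOrder-2}_e$; likewise \eqref{eqn: jump of tangential w_h} lets me subtract $\cP^{\polOrder-2}_e(\kappa\partial_{ns}u)$ in the tangential term, and \eqref{eqn: jump of wh} lets me subtract $\cP^{\polOrder-3}_e g_C$ in the value term. After a Cauchy--Schwarz on each edge, the coefficient factors are controlled by the edge interpolation bound of Theorem~\ref{thm: interpolation estimates}, giving $h_e^{\polOrder-3/2}$ for the first two families and $h_e^{\polOrder-5/2}$ for the last. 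A Leibniz expansion of the projected coefficients then produces exactly the weighted $u$-seminorms in \eqref{eqn: nonconformity error bound}: the second-derivative coefficients yield $\|\kappa\|_{W^{\polOrder-1,\infty}}(|u|_{\polOrder+1}+|u|_\polOrder)$, while $g_C$, which contains a third derivative of $u$ in $\partial_n(\kappa\Delta u)$ and the first-order term $\beta\partial_n u$, yields in addition $\|\beta\|_{W^{\polOrder-2,\infty}}|u|_{\polOrder-1}$.

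The main obstacle is the complementary bound on the jump factors, which must be $h_e^{1/2}|w_h|_{2,K^+\cup K^-}$ for the derivative jumps and $h_e^{3/2}|w_h|_{2,K^+\cup K^-}$ for the value jump, with no appearance of $|w_h|_3$ (unavailable for virtual functions). I would prove $\|[\partial_n w_h]\|_{0,e},\|[\partial_s w_h]\|_{0,e}\lesssim h_e^{1/2}|w_h|_{2,K^+\cup K^-}$ by splitting $\nabla w_h=(\nabla w_h-\cP^0_K\nabla w_h)+\cP^0_K\nabla w_h$ on each adjacent element, estimating the non-constant part by a scaled trace inequality followed by the Poincaré estimate $\|\nabla w_h-\cP^0_K\nabla w_h\|_{0,K}\lesssim h_K|w_h|_{2,K}$, and controlling the jump of the two element-wise constant gradients through the orthogonality $\int_e[\partial_n w_h]=0$ (valid since $\prob_0\subset\prob_{\polOrder-2}(e)$). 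For the value jump I would use that $w_h$ is continuous at interior vertices, so $[w_h]$ vanishes at both endpoints of $e$; a one-dimensional Poincaré inequality then gives $\|[w_h]\|_{0,e}\lesssim h_e\|[\partial_s w_h]\|_{0,e}$, and the derivative-jump bound closes the estimate. Summing over $\mathcal{E}_h$, applying Cauchy--Schwarz in the edge index, and using the finite overlap of the patches $K^+\cup K^-$ collects the factor $|w_h|_{2,h}$ and the global seminorms of $u$. Boundary edges are handled identically, using $u=\partial_n u=0$ on $\partial\Omega$ and the vanishing boundary dofs of $w_h$. In the $C^0$-conforming case $\vemSpace\subset H^1_0(\Omega)$ both $[w_h]$ and $[\partial_s w_h]$ vanish, only the first family survives, and the sharper branch of \eqref{eqn: nonconformity error bound} follows.

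Finally, for $\polOrder=2$ the regularity is only $u\in H^3(\Omega)$, so \eqref{eqn: strong form} and hence Lemma~\ref{lemma: nonconformity proof} are unavailable; here I would instead start from $\mathcal{N}(u,w_h)=a(u,w_h)-(f,w_h)$ and integrate by parts only as far as $H^3$ regularity permits, retaining volume remainders involving $\gamma u$ and $f$. The degraded orthogonalities (the value jump is orthogonal only to $\prob_{-1}=\{0\}$, the derivative jumps only to $\prob_0$) mean that just constants can be subtracted from the edge coefficients, and the uncancelled volume terms are estimated by inserting $\cP^0_K w_h$ and using $\|w_h-\cP^0_K w_h\|_{0,K}\lesssim h_K|w_h|_{1,K}$; this is precisely what produces the additional $(\|\gamma\|_{L^\infty}|u|_0+|f|_0)|w_h|_{1,h}$ contribution and the overall factor $h$ in \eqref{eqn: l=2 nonconformity error bound}.
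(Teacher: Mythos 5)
Your treatment of the case $\polOrder\ge 3$ is sound and largely coincides with the paper's proof: the terms $I_1$ and $I_2$ are handled identically (orthogonality of the jumps against $\prob_{\polOrder-2}(e)$, insertion of $\cP^{\polOrder-2}_e$ of the coefficient and $\cP^0_e$ of the jump, Cauchy--Schwarz, and the edge estimates of Theorem~\ref{thm: interpolation estimates}). For the value-jump term $I_3$ you deviate from the paper: you bound $\|[w_h]\|_{0,e}$ by $h_e\|[\partial_s w_h]\|_{0,e}$ via a one-dimensional Poincar\'e inequality (using that $[w_h]$ vanishes at the endpoints of every edge, including boundary edges), whereas the paper inserts the interpolant $\Pi^1 w_h\in H^1_0(\Omega)$ into the lowest-order conforming VEM space and estimates $\|[w_h-\Pi^1 w_h]\|_{0,e}$. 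Both yield the factor $h_e^{3/2}|w_h|_{2,K^+\cup K^-}$ and hence the same $h^{\polOrder-1}$ rate; your route is more self-contained since it avoids any conforming intermediary for this term. The $C^0$-conforming branch is also handled correctly.

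The case $\polOrder=2$ in the nonconforming setting, however, contains a genuine gap. After one integration by parts on the hessian term you are left with $E_2$ (the edge terms, which you bound correctly) plus the volume functional $E_1=\sum_K\{-\int_K\sum_{i,j}\partial_j(\kappa\partial_{ij}u)\partial_i w_h+\int_K\beta Du\cdot Dw_h+\int_K\gamma u w_h\}-(f,w_h)$. This functional vanishes only when tested against functions in $H^1_0(\Omega)$ (via \eqref{eqn: cts test function}), and since $u\in H^3(\Omega)$ only, you cannot integrate by parts again to convert it into further edge terms. Your proposal to control it "by inserting $\cP^0_K w_h$'' does not work: the elementwise constant $\cP^0_K w_h$ is not in $H^1_0(\Omega)$ (it is not even globally $H^1$), so substituting it into \eqref{eqn: cts test function} produces no cancellation, and the leading contribution $-\int_K\sum_{i,j}\partial_j(\kappa\partial_{ij}u)\partial_i w_h$ is in any case a functional of $\nabla w_h$, not of $w_h$, so it is untouched by adding constants. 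What is needed is a globally $H^1_0$-conforming approximation of $w_h$ --- the paper takes $v=\Pi^1 w_h\in H^1_0(\Omega)$ in \eqref{eqn: cts test function} and then estimates $E_1$ through the differences $D(w_h-\Pi^1 w_h)$ and $w_h-\Pi^1 w_h$, whose interpolation bounds $|w_h-\Pi^1 w_h|_{1,K}\lesssim h_K|w_h|_{2,K}$ and $\|w_h-\Pi^1 w_h\|_{0,K}\lesssim h_K|w_h|_{1,K}$ are precisely what generate the terms $h\|\beta\|_{L^\infty}|u|_1|w_h|_{2,h}$ and $h(\|\gamma\|_{L^\infty}|u|_0+|f|_0)|w_h|_{1,h}$ in \eqref{eqn: l=2 nonconformity error bound}. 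Without such a conforming intermediary your argument cannot close.
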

\begin{proof}
    Firstly, consider the case $\polOrder \geq 3$. Then since $u \in H^4(\Omega)$, using Lemma \ref{lemma: nonconformity proof}, it holds that 
    \begin{align*}
        \mathcal{N}(u,w_h) =&    
        \sum_{e \in \mathcal{E}_h} \int_e \kappa (\Delta u - \partial_{ss} u) [\ \partial_n w_h \ ] + \sum_{e \in \mathcal{E}_h} \int_e \kappa \partial_{ns} u [ \ \partial_s w_h \ ]  
        \\
        &+ 
        \sum_{e \in \mathcal{E}_h} \int_e \big( \partial_{ss} u \partial_n \kappa - \partial_{ns} u \partial_s \kappa - \partial_n (\kappa \Delta u) + \beta \partial_n u \big) [ \ w_h \ ] 
        =: 
        I_1 + I_2 + I_3.
    \end{align*}
    For $I_1$, we apply \eqref{eqn: jump of normal wh}, to see that,
    \begin{align*}
        I_1 &\leq 
        \Big| \sum_{e \in \mathcal{E}_h} \int_e \big( \kappa (\Delta u - \partial_{ss} u ) - \mathcal{P}^{\polOrder-2}_e \big( \kappa (\Delta u -\partial_{ss} u) \big) \big) \big( [\ \partial_n w_h \ ] - \mathcal{P}_e^{0} [ \ \partial_n w_h \ ] \big) \Big| \\
        &\leq \sum_{e \in \mathcal{E}_h} \big\| \kappa ( \Delta u - \partial_{ss} u ) - \mathcal{P}^{\polOrder-2}_e \big( \kappa (\Delta u - \partial_{ss} u) \big) \big\|_{0,e} \big\| [ \ \partial_n w_h \ ] - \mathcal{P}_e^{0} [ \ \partial_n w_h \ ] \big\|_{0,e}.
    \end{align*}
    Where we have used the properties of the $L^2$ projection, as well as Cauchy-Schwarz in the last step.
    Using the estimates
    \begin{align*}
        \| \kappa ( \Delta u - \partial_{ss} u ) - \mathcal{P}^{\polOrder-2}_e \big( \kappa (\Delta u - \partial_{ss} u) \big) \|_{0,e} 
        &\leq C h^{\polOrder-2+1-1/2} |\kappa ( \Delta u - \partial_{ss}  u)|_{\polOrder-1} ,  \\
        \| [ \ \partial_n w_h \ ] - \mathcal{P}^0_e [ \ \partial_n w_h \ ] \|_{0,e} &\leq C h^{1-1/2} | \partial_n w_h |_{1,h}.
    \end{align*}
    It therefore follows that,
    \begin{align}
        I_1 \leq \Big|  \sum_e \int_e \kappa( \Delta u - \partial_{ss} u ) [ \ \partial_n w_h \ ] \mathrm{d}s \Big| 
        \leq 
        C h^{\polOrder -1} \|\kappa\|_{W^{\polOrder-1,\infty}} |u|_{\polOrder+1} |w_h|_{2,h}.
        \label{eqn: normal jump of w_h term in error anal}
    \end{align}
    \red{Notice that \eqref{eqn: normal jump of w_h term in error anal} gives the desired result when $\vemSpace \subset H^1_0(\Omega)$, as required.}

    For the term $I_2$, we apply \eqref{eqn: jump of tangential w_h}, introduce the polynomial $\cP_e^{\polOrder-2}(\kappa \partial_{ns} u)$ and use standard interpolation estimates to get, 
    \begin{align}
        I_2 
        \leq 
        \Big| \sum_{e \in \mathcal{E}_h} \int_e  \big(\kappa \partial_{ns} u - \mathcal{P}_e^{\polOrder -2} (\kappa \partial_{ns} u) \big) \big([ \, \partial_s w_h \, ] - \mathcal{P}_e^0[ \, \partial_s w_h \, ] \big) \mathrm{d}s \Big| 
        \leq 
        C h^{\polOrder -1} \|\kappa\|_{W^{\polOrder-1,\infty}}  |u|_{\polOrder+1} |w_h|_{2,h}. 
        \label{eqn: tangential jump of wh in error analysis}
    \end{align} 
    Finally, consider the term $I_3$. For ease of notation, let us set 
    $
    u^{*} := \partial_{ss} u \partial_n \kappa - \partial_{ns} u \partial_s \kappa - \partial_n (\kappa \Delta u) + \beta \partial_n u
    $
    and note that 
    \begin{align*}
        |u^{*}|_{\polOrder-2}
        \leq
        \| \kappa \|_{W^{\polOrder-1,\infty}} \big( |u|_{\polOrder}
        + |u|_{\polOrder+1} \big)
        +
        \| \beta \|_{W^{\polOrder-2,\infty}} |u|_{\polOrder-1}. 
    \end{align*}
    For this term, $I_3$, we follow the approach taken in \cite{zhao_morley-type_2018} and introduce the interpolation of $w_h$ into the lowest order conforming VEM space $\Pi^1 w_h \in H^1_0(\Omega)$. 
    \begin{align*}
        I_3 \leq 
        \Big|  \sum_{e \in \mathcal{E}_h} \int_e ( u^{*} - \mathcal{P}^{\polOrder-3}_e(u^{*})) [\, w_h - \Pi^1 w_h \, ] \Big| 
        \leq \sum_{e \in \mathcal{E}_h} \big\| u^{*} - \mathcal{P}^{\polOrder-3}_e ( u^{*}) \|_{0,e} \big\| [ \, w_h - \Pi^1 w_h \, ] \big\|_{0,e} .
    \end{align*}
    Therefore,
    \begin{align}
        I_3 \leq \Big| \sum_{e \in \mathcal{E}_h} \int_e \big( \partial_{ss} u \partial_n \kappa - \partial_{ns} u \partial_s \kappa  - \partial_n (\kappa \Delta u) - \beta \partial_n u  \big) [ \,  w_h \, ] \, \mathrm{d}s \Big| 
        \leq
        C h^{\polOrder-1} |u^{*}|_{\polOrder-2} |w_h|_{2,h}. 
        \label{eqn: jump of wh in error anal}
    \end{align}    
    Hence, when $\polOrder \geq 3$, combining \eqref{eqn: normal jump of w_h term in error anal}, \eqref{eqn: tangential jump of wh in error analysis}, and \eqref{eqn: jump of wh in error anal}, the result \eqref{eqn: nonconformity error bound} follows.

    Now consider the case $\polOrder=2$. \strike{Assume that $u$ solves problem }. For a test function $v \in H^1_0(\Omega)$ (and for $u \in H^3(\Omega)$) it holds that 
    \begin{align}\label{eqn: cts test function}
        (f,v) = 
        \sum_{K \in \cT_h} \Big\{
        - \int_{K} \sum_{i,j=1}^2 \partial_j (\kappa \partial_{ij} u) \partial_i  v
        + \int_K 
        \beta Du \cdot D v
        + \int_K \gamma uv
        \Big\}.
    \end{align}
    Using \eqref{eqn: hessian term nonconf error} to express the hessian term in the bilinear form, it follows that 
    \begin{align*}
        \mathcal{N}(u,w_h) = \ & a(u,w_h) - (f,w_h) 
        \\
        = \ & \Big(
        \sum_{K \in \cT_h} \Big\{ - \int_K \sum_{i,j=1}^2 \partial_j (\kappa \partial_{ij} u) \partial_i w_h
        + \int_K \beta Du \cdot D w_h + \int_K \gamma uw_h \Big\}
        - (f,w_h)
        \Big)
        \\
        &+
        \sum_{K \in \cT_h}
        \int_{\partial K} \sum_{i,j=1}^2 \kappa \partial_{ij} u \partial_i w_h n_j 
        \ =: \ E_1 + E_2.
    \end{align*}
    Using arguments as in the proof of Lemma \ref{lemma: nonconformity proof} it follows that $E_2 = I_1 + I_2$. These terms can be bounded as in \eqref{eqn: normal jump of w_h term in error anal} and \eqref{eqn: tangential jump of wh in error analysis} respectively. Therefore, it follows that
    \begin{align*}
        E_2 \leq C h \| \kappa\|_{W^{1,\infty}} |u|_{3} |w_h|_{2,h}.
    \end{align*} 
    Note that when the VEM space is conforming, $\vemSpace \subset H^1_0(\Omega)$, it follows that $w_h$ is a viable test function in $H^1_0(\Omega)$ and therefore we can take $v=w_h$ in \eqref{eqn: cts test function}.  
    Hence, the nonconformity error reduces to the following 
    \begin{align*}
        \mathcal{N}(u,w_h) = E_2 \leq C h \| \kappa \|_{W^{1,\infty}} |u|_3 |w_h|_{2,h}.
    \end{align*} 
    To treat $E_1$ in the case that $\vemSpace \not \subset H^1_0(\Omega)$ we now introduce the interpolation of $w_h$ into the lowest order conforming VEM space, $\Pi^{1} w_h \in H^1_0(\Omega)$. Then, it holds that 
    \begin{align*}
        E_1 = \ &
        \sum_{K \in \cT_h} \big\{ 
        \int_K 
        \mathrm{div} (\kappa D^2 u) \cdot D(\Pi^{1} w_h - w_h)
        + \int_K \beta Du \cdot D (w_h - \Pi^{1} w_h) + \int_K \gamma u (w_h - \Pi^{1} w_h)
        \big\}  
        \\
        &
        +
        (f,\Pi^{1} w_h-w_h).
    \end{align*}
    Using standard estimates, we see that 
    \begin{align*}
        \sum_{K \in \cT_h} 
        \int_K 
        \mathrm{div} (\kappa D^2 u) \cdot D(\Pi^{1} w_h - w_h) 
        &\leq
        C h \| \kappa\|_{W^{1,\infty}} |u|_{3} |w_h|_{2,h},
        \\
        \sum_{K \in \cT_h} \int_K \beta Du \cdot D (w_h - \Pi^{1} w_h)
        &\leq 
        C h \| \beta \|_{L^{\infty}} |u|_1 |w_h|_{2,h},
        \\
        \sum_{K \in \cT_h} \int_K \gamma u (w_h -  \Pi^{1} w_h) 
        &\leq C h \| \gamma \|_{L^{\infty}} |u|_{0} |w_h|_{1,h},
    \intertext{and finally,}
        (f,\Pi^{1} w_h-w_h) &\leq C h |f|_0 |w_h|_{1,h}.
    \end{align*}
    This concludes the proof.
\end{proof}

\subsection{Energy norm estimate}
Now that we have successfully bounded the nonconformity error, we look at the final term in Theorem \ref{thm: a-priori error bound energy norm} and prove convergence in the energy norm.  

\red{ To ease the presentation of the next Theorems and Corollary, we observe that the following inequality holds
\begin{align}\label{eqn: norm equiv}
    |w_h|_{s,h} \leq C \eta_s \vertiii{w_h}_h
\end{align}
for each $s=0,1,2$, where  $\eta_2 = \frac{1}{\sqrt{\kappa_0}}$, $\eta_1 = \frac{1}{\max{( \sqrt{\kappa_0} , \sqrt{\beta_0} )} }  $, and $\eta_0 = \frac{1}{\max{( \sqrt{\kappa_0} , \sqrt{\beta_0}, \sqrt{\gamma_0} )} } $. Notice that the inequality in \eqref{eqn: norm equiv} holds due to standard discrete Poincar\'{e} inequalities, and for example can be shown similarly as in \cite{antonietti_fully_2018}.  
}

\begin{theorem}\label{thm: variational crime}
    Assume that \ref{mesh} - \ref{assump: star shaped wrt a ball} hold, defined in Assumptions \ref{assumptions: discrete assumptions} and \ref{assumption: mesh regularity}.  
    Let $u \in H^{\polOrder+1}(\Omega)$ be the solution to continuous problem \eqref{eqn: cts problem}.
    Assume that the coefficients satisfy $\kappa \in W^{\polOrder-1,\infty}(\Omega),\beta \in W^{\polOrder,\infty}(\Omega)$, and $\gamma \in W^{\polOrder+1,\infty}(\Omega)$.
    Then it holds that
    \begin{align}\label{eqn: var crime}
            \inf_{p \in \prob_{\polOrder}(\cT_h)}  \bigg[& \vertiii{u-p}_h + \sum_{K \in \cT_h} \sup_{w_h \in \vemSpace^K} \frac{|a^K(p,w_h)-a_h^K(p,w_h)|}{ \vertiii{w_h}_K} \bigg]
            \leq \ 
            C \big( h^{\polOrder -1}c_2 + h^{ \polOrder } c_1 
            + h^{\polOrder+1} c_0 
            \big) |u|_{\polOrder+1}
    \end{align}
    for a constant $C$ independent of $h$.
    We define the remaining constants $c_0,c_1,$ and $c_2$ as follows; let 
    $c_2 = \|\kappa\|_{L^{\infty}}^{\frac{1}{2}} + \eta_2 \| \kappa \|_{W^{\polOrder-1,\infty}}$,
    $c_1 = \|\beta\|_{L^{\infty}}^{\frac{1}{2}} + \eta_1 \| \beta\|_{W^{\polOrder,\infty}}$, and $c_0 = \|\gamma\|_{L^{\infty}}^{\frac{1}{2}} +\eta_0
    \| \gamma \|_{W^{\polOrder+1,\infty}} $.
\end{theorem}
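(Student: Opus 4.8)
The plan is to choose the piecewise polynomial $p\in\prob_\polOrder(\cT_h)$ defined elementwise by the local $L^2$ projection, $p|_K:=\cP^\polOrder_K u$, and to bound the two contributions in \eqref{eqn: var crime} separately. For the approximation term I would expand $\vertiii{u-p}_K^2$ into its $\kappa$-, $\beta$- and $\gamma$-parts, estimate each coefficient in $L^\infty$, and apply the elementwise bounds of Theorem \ref{thm: interpolation estimates} with $s=2,1,0$ and $m=\polOrder+1$. Squaring, summing over $K$ and taking the root then yields exactly $C\big(h^{\polOrder-1}\|\kappa\|_{L^\infty}^{1/2}+h^{\polOrder}\|\beta\|_{L^\infty}^{1/2}+h^{\polOrder+1}\|\gamma\|_{L^\infty}^{1/2}\big)|u|_{\polOrder+1}$, i.e.\ the first summand of each of $c_2,c_1,c_0$.

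For the consistency term I would first exploit polynomial consistency. Since $p|_K\in\prob_\polOrder(K)$, Lemma \ref{lemma: l2 projections assumptions satisfied} and Remark \ref{remark: polynomial consistency} give $\valueProj p=p$, $\gradProj p=Dp$, $\hessProj p=D^2p$ and $S^K(p-\valueProj p,\cdot)=0$, so that
\begin{align*}
    a^K(p,w_h)-a_h^K(p,w_h)
    =\ &\int_K \kappa\, D^2 p : \big(D^2 w_h-\hessProj w_h\big)
    +\int_K \beta\, Dp\cdot\big(Dw_h-\gradProj w_h\big)\\
    &+\int_K \gamma\, p\,\big(w_h-\valueProj w_h\big).
\end{align*}
The same lemma identifies the $w_h$-projections with $L^2$ projections, $\hessProj w_h=\cP^{\polOrder-2}_K(D^2w_h)$, $\gradProj w_h=\cP^{\polOrder-1}_K(Dw_h)$, $\valueProj w_h=\cP^\polOrder_K w_h$, so that for $s=2,1,0$ each factor $D^s w_h-\cP^{\polOrder-s}_K(D^s w_h)$ is $L^2(K)$-orthogonal to $\prob_{\polOrder-s}(K)$.

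The crux is an orthogonality-plus-approximation argument on each integral. Writing $D^s p=D^s u-D^s(u-p)$, I would treat the $u$-part by subtracting the best polynomial: because the $w_h$-residual is orthogonal to $\prob_{\polOrder-s}(K)$, I may replace $\kappa D^2 u$, $\beta Du$ and $\gamma u$ by their $L^2$ projections onto $\prob_{\polOrder-2}(K)$, $\prob_{\polOrder-1}(K)$ and $\prob_\polOrder(K)$ respectively, whence Cauchy-Schwarz and Theorem \ref{thm: interpolation estimates} produce the residual factors $h_K^{\polOrder-1}|\kappa D^2 u|_{\polOrder-1,K}$, $h_K^{\polOrder}|\beta Du|_{\polOrder,K}$ and $h_K^{\polOrder+1}|\gamma u|_{\polOrder+1,K}$. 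A Leibniz estimate for products then bounds these by $\|\kappa\|_{W^{\polOrder-1,\infty}}$, $\|\beta\|_{W^{\polOrder,\infty}}$, $\|\gamma\|_{W^{\polOrder+1,\infty}}$ times $|u|_{\polOrder+1,K}$, up to lower-order $u$-seminorms carrying the same or higher powers of $h_K$. The $(u-p)$-parts are handled directly by $\|\kappa\|_{L^\infty}|u-p|_{2,K}$, $\|\beta\|_{L^\infty}|u-p|_{1,K}$, $\|\gamma\|_{L^\infty}|u-p|_{0,K}$ and again Theorem \ref{thm: interpolation estimates}, giving the same powers $h_K^{\polOrder-1},h_K^{\polOrder},h_K^{\polOrder+1}$ with $L^\infty$-norms dominated by the respective $W$-norms. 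The remaining $w_h$-factors are simply $\|D^s w_h-\cP^{\polOrder-s}_K(D^s w_h)\|_{0,K}\le |w_h|_{s,K}$.

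Finally I would divide by $\vertiii{w_h}_K$ and invoke the norm equivalences $|w_h|_{s,K}\le C\eta_s\vertiii{w_h}_K$ recorded before the theorem, which converts the factors $|w_h|_{2,K},|w_h|_{1,K},|w_h|_{0,K}$ into the constants $\eta_2,\eta_1,\eta_0$ of $c_2,c_1,c_0$; summing over $K$ and applying Cauchy-Schwarz in the element index (pairing the $w_h$-factor with $\vertiii{w_h}_h$ so that it cancels) collapses $\sum_K|u|_{\polOrder+1,K}$ into $|u|_{\polOrder+1}$ with the global powers $h^{\polOrder-1},h^{\polOrder},h^{\polOrder+1}$. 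I expect the main obstacle to be precisely this orthogonality step: arranging each integral to \emph{see} only the top-order projection residual so that the full power of $h_K$ is recovered, while simultaneously matching the sharp coefficient regularities $W^{\polOrder-1,\infty},W^{\polOrder,\infty},W^{\polOrder+1,\infty}$ through the Leibniz rule and carrying the $w_h$-dependence cleanly through the elementwise summation.
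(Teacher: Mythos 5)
Your proposal is correct and follows essentially the same route as the paper: choose $p|_K=\cP^{\polOrder}_K u$, use polynomial consistency to kill the stabilization, identify $\Pi^K_s w_h$ with $\cP^{\polOrder-s}_K(D^s w_h)$ via Lemma \ref{lemma: l2 projections assumptions satisfied}, and then exploit the $L^2$-orthogonality of the residual $(I-\cP^{\polOrder-s}_K)D^s w_h$ to shift the projection onto $\alpha_s D^s p$ before applying Theorem \ref{thm: interpolation estimates} and the norm equivalences $|w_h|_{s,K}\le C\eta_s\vertiii{w_h}_K$. The only (harmless) deviation is your extra splitting $D^s p = D^s u - D^s(u-p)$, which the paper avoids by estimating $\|(I-\cP^{\polOrder-s}_K)(\alpha_s D^s\cP^{\polOrder}_K u)\|_{0,K}$ directly.
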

\begin{proof}
    To show that \eqref{eqn: var crime} holds, we \red{note} \strike{have} that
    \begin{align*}
        \inf_{p \in \prob_{\polOrder}(\cT_h)}  \bigg[& \vertiii{u-p}_h + \sum_{K \in \cT_h} \sup_{w_h \in \vemSpace^K} \frac{|a^K(p,w_h)-a_h^K(p,w_h)|}{ \vertiii{w_h}_K} \bigg] \\
        \leq& \, \vertiii{ u - \mathcal{P}^{\polOrder} u }_{h} + 
        \sum_{K \in \cT_h} \sup_{w_h \in \vemSpace^K} \frac{|a^K(\mathcal{P}_K^{\polOrder} u,w_h)-a_h^K( \mathcal{P}_K^{\polOrder} u,w_h)|}{\vertiii{w_h}_{K}}. 
    \end{align*}
    Note that the first term is bounded easily by standard interpolation estimates.
    For the other term, we \red{recall} \strike{use the property that $\valueProj p = p$, see} Remark \ref{remark: polynomial consistency}, which implies that the stabilization part of the discrete bilinear form vanishes.  
    Therefore,
    \begin{align*}
        |a^K(\mathcal{P}^{\polOrder}_K u,w_h)-a_h^K( \mathcal{P}_K^{\polOrder} u,w_h)| 
        \leq \Big| &\int_K \kappa D^2 (\mathcal{P}_K^{\polOrder} u) : D^2 w_h - \int_K \kappa \hessProj  (\mathcal{P}_K^{\polOrder} u) : \hessProj w_h \Big|  \\
        + \; \Big| &\int_K \beta D(\mathcal{P}_K^{\polOrder} u) \cdot D w_h - \int_K \beta \gradProj  (\mathcal{P}_K^{\polOrder} u) \cdot \gradProj w_h \Big| \\
        + \; \Big| &\int_K \gamma (\mathcal{P}_K^{\polOrder} u )w_h - \int_K \gamma \valueProj  (\mathcal{P}_K^{\polOrder} u) \valueProj w_h \Big|  \\
        =: & \ T_2 + T_1 + T_0.
    \end{align*} 
    Then, denoting the coefficients as $\alpha_2 := \kappa, \alpha_1 := \beta$ and $\alpha_0 := \gamma$, we can show that  
    \begin{align}\label{eqn: Ts estimate}
        T_s \leq C h_K^{\polOrder -s+1} \| \alpha_s \|_{W^{\polOrder-s+1,\infty}} |u|_{\polOrder +1} |w_h|_{s,K}.
    \end{align}
    To see that \eqref{eqn: Ts estimate} holds, we use the results of Lemma \ref{lemma: l2 projections assumptions satisfied} to express our projections as $L^2$ projections. Hence, it follows that 
    \begin{align*}
        T_s = \ & \bigg| \int_K \alpha_s \big( D^s (\cP^{\polOrder}_K u) : D^s w_h - \cP^{\polOrder-s}_K D^s ( \cP^{\polOrder}_K u) : \cP^{\polOrder-s}_K D^s w_h \big) \bigg| 
        = \bigg| \int_K \alpha_s D^s \cP^{\polOrder}_K u : (I - \cP^{\polOrder-s}_K)D^sw_h \bigg|  
        \\
        \leq \ &
        \| ( I - \cP^{\polOrder-s}_K ) \alpha_s D^s \cP_K^{\polOrder} u \|_{0,K} \| D^s w_h \|_{0,K} 
        \leq C h_K^{\polOrder-s+1} 
        \| \alpha_s \|_{W^{\polOrder-s+1,\infty}}
        | u |_{\polOrder+1} |w_h|_{s,K}.
    \end{align*}
    The result now follows.
\end{proof}

We now have the following convergence theorem which is a result of Theorems \ref{thm: interpolation error estimate}, \ref{thm: non conformity error without (H4)}, \ref{thm: variational crime} and Lemma \ref{lemma: load term estimate}. 
\begin{theorem}[Convergence in the energy norm]\label{thm: energy norm}
    Assume that \ref{mesh} - \ref{assump: star shaped wrt a ball} hold, defined in Assumptions \ref{assumptions: discrete assumptions} and \ref{assumption: mesh regularity}. 
    \strike{Let} \red{Assume that} $u \in H^{\polOrder+1}(\Omega)$ \strike{be} \red{is} the solution to the continuous problem \eqref{eqn: cts problem} and suppose that $u_h \in \vemSpace$ is the solution to the discrete problem \eqref{eqn: discrete problem}. 
    Assume that the coefficients satisfy $\kappa \in W^{\polOrder-1,\infty}(\Omega),\beta \in W^{\polOrder,\infty}(\Omega)$, and $\gamma \in W^{\polOrder+1,\infty}(\Omega)$.
    Let $f \in H^{s}(\Omega)$ and define $r:=\min(s-1,\polOrder)$.
    Then, under these assumptions there exists a constant $C$ independent of $h$ such that 
    \begin{align}\label{eqn: error energy norm}
        \vertiii{u - u_h}_h 
        \leq 
        C \big\{ h^{l-1} \big( c_2 |u|_{\polOrder+1}
        +
        c_3 |u|_{\polOrder} +  c_4 |u|_{\polOrder-1} \big)  
        +  h^{ \polOrder } c_1 |u|_{\polOrder+1}
        +
        h^{\polOrder+1} c_0 |u|_{\polOrder+1}      
        +
        h^{r+2}  \eta_1 |f|_{r+1} \big\}.
    \end{align}
    Recall that the constants $c_0,c_1,$ and $c_2$ are defined in Theorem \ref{thm: variational crime} and define $c_3 = \eta_2  \| \kappa \|_{W^{\polOrder-1,\infty}}$ and $c_4= \eta_2  \| \beta \|_{W^{\polOrder-2,\infty}}$.

    If $\vemSpace \subset H^1_0(\Omega)$ then it follows that 
    \begin{align}\label{eqn: error energy norm cts}
        \vertiii{u - u_h}_h 
        \leq 
        C \big\{ \big( h^{l-1} c_2 
        +  h^{ \polOrder } c_1 
        +
        h^{\polOrder+1} c_0 \big) |u|_{\polOrder+1}      
        +
        h^{r+2}  \eta_1 |f|_{r+1} \big\}.
    \end{align}
\end{theorem}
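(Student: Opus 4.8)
The plan is to apply the Strang-type a priori bound of Theorem \ref{thm: a-priori error bound energy norm} and control each of its four terms using the estimates already assembled, then collect the resulting powers of $h$. Dividing the bound through by $\alpha_*$ and absorbing $\alpha_*,\alpha^*$ into a generic constant $C$, it suffices to treat the approximation error, the load term, the nonconformity error, and the variational crime term in turn.

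First I would dispose of the two contributions that already appear in the required form. The approximation error $\inf_{v_h\in\vemSpace}\vertiii{u-v_h}_h$ is bounded at once by Theorem \ref{thm: interpolation error estimate}, supplying the $\|\kappa\|_{L^\infty}^{1/2}$, $\|\beta\|_{L^\infty}^{1/2}$, $\|\gamma\|_{L^\infty}^{1/2}$ pieces of $c_2,c_1,c_0$ at orders $h^{\polOrder-1}$, $h^{\polOrder}$, $h^{\polOrder+1}$ respectively. The variational crime term is handled in a single step by Theorem \ref{thm: variational crime}, which already delivers the $\eta_s$-weighted coefficient pieces of $c_2,c_1,c_0$ on $|u|_{\polOrder+1}$.

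The key technical step is converting the load and nonconformity estimates, which are phrased in broken seminorms $|w_h|_{s,h}$, into bounds over $\vertiii{w_h}_h$ so that the suprema in the Strang bound can actually be evaluated. For this I would invoke the norm equivalence $|w_h|_{s,h}\leq C\eta_s\vertiii{w_h}_h$ recorded just before the theorem (the consequence of the discrete Poincar\'e inequality from \cite{antonietti_fully_2018}). For the load term, Lemma \ref{lemma: load term estimate} gives the bound $Ch^{r+2}|f|_{r+1}|w_h|_{1,h}$; applying $|w_h|_{1,h}\leq C\eta_1\vertiii{w_h}_h$ and dividing produces the $h^{r+2}\eta_1|f|_{r+1}$ contribution. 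For the nonconformity error I would use the first branch ($\polOrder\geq 3$, $\vemSpace\not\subset H^1_0(\Omega)$) of Theorem \ref{thm: non conformity error without (H4)}, which bounds $|\mathcal{N}(u,w_h)|$ by $Ch^{\polOrder-1}$ times $|w_h|_{2,h}$; applying $|w_h|_{2,h}\leq C\eta_2\vertiii{w_h}_h$ yields precisely the weights $c_3=\eta_2\|\kappa\|_{W^{\polOrder-1,\infty}}$ on $|u|_{\polOrder}$ and $c_4=\eta_2\|\beta\|_{W^{\polOrder-2,\infty}}$ on $|u|_{\polOrder-1}$, while the $|u|_{\polOrder+1}$ piece merges into the $c_2|u|_{\polOrder+1}$ term. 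Summing the four contributions and grouping by powers of $h$ gives \eqref{eqn: error energy norm}. For the conforming case $\vemSpace\subset H^1_0(\Omega)$ I would simply rerun this argument with the second branch of Theorem \ref{thm: non conformity error without (H4)}, in which only the $\kappa(\Delta u-\partial_{ss}u)[\partial_n w_h]$ term survives; this eliminates the $c_3|u|_{\polOrder}$ and $c_4|u|_{\polOrder-1}$ terms and leaves the cleaner estimate \eqref{eqn: error energy norm cts}.

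The main obstacle is not analytic but organisational: carefully tracking how the various coefficient Sobolev norms and the $\eta_s$ factors recombine into the advertised constants $c_0,\dots,c_4$ without double counting between the interpolation, variational crime, and nonconformity contributions. A secondary subtlety is the case $\polOrder=2$, where the nonconformity bound \eqref{eqn: l=2 nonconformity error bound} carries the additional terms $\|\gamma\|_{L^\infty}|u|_0$ and $|f|_0$ multiplying $|w_h|_{1,h}$; these are absorbed through the same $\eta_1$ conversion and do not disturb the leading $h^{\polOrder-1}$ rate, so the argument goes through with only cosmetic modifications to the collected constants.
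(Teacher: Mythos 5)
Your proposal is correct and follows exactly the route the paper intends: the paper gives no written proof beyond stating that the theorem "is a result of" Theorems \ref{thm: interpolation error estimate}, \ref{thm: non conformity error without (H4)}, \ref{thm: variational crime} and Lemma \ref{lemma: load term estimate}, and your assembly of these four bounds inside the Strang estimate of Theorem \ref{thm: a-priori error bound energy norm}, with the $\eta_s$ norm-equivalence converting the broken seminorms into $\vertiii{\cdot}_h$, is precisely that argument with the constants $c_0,\dots,c_4$ correctly accounted for. Your remark on the extra $\polOrder=2$ terms from \eqref{eqn: l=2 nonconformity error bound} is a fair observation about a loose end the paper itself glosses over.
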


\section{Perturbation problem} \label{section: perturbation problem}
We turn our attention to the following fourth order perturbation problem. 
For a polygonal domain $\Omega \subset \R^2$ the perturbation problem reads as follows
\begin{equation}\label{eqn: perturbation problem}
    \begin{split}    
            \epsilon^2 \Delta^2 u - \Delta u &=f, \quad \text{ in } \Omega, \\
            u = \partial_n u &= 0, \quad \text{ on } \partial \Omega.
    \end{split} 
\end{equation}
We make the minimal assumptions that $f \in L^2(\Omega)$ and $\epsilon \in \R$ such that $0 < \epsilon \leq 1$. Taking $\kappa(x) = \epsilon^2$, $\beta(x) = 1$, and $\gamma(x) = 0$ we can examine the error analysis from the previous section, with the energy norm now becoming $ \vertiii{ v }_{h}^2 = \epsilon^2 |v|^2_{2,h} + |v|^2_{1,h}.$

It is well known that for example the lowest order $C^1$ nonconforming
space on triangles (the Morley element, \cite{morley1967triangular}) does not lead to a scheme that is
robust with respect to $\epsilon\to 0$ (see for example \cite{wang2001necessity,nilssen_robust_2000}). 
There have been a range of modifications suggested  to the original Morley element, for example in \cite{wang_uniformly_2011,wang2006modified,zhang_nonconforming_2020}.
In \cite{nilssen_robust_2000}
a modification is suggested which on triangles corresponds to our $\CzeroconfSpace$ conforming
space in the lowest order setting and convergence of the method is proven in
this case. 
We give error estimates for the higher order version of
those two spaces in the following. In addition we study a new modified $C^1$ nonconforming
discretization, $\Conemod$, which has the same degrees of freedom as the original $C^1$
nonconforming space but is stable with respect to the perturbation paramter $\epsilon$. This is
achieved by a modification to the gradient projection, $\gradProj$, given next.

\begin{definition}\label{defn: modified grad proj}
    We define the \emph{modified gradient projection} to be the following 
    \begin{align}\label{eqn: modified grad proj}
        \int_K \gradProj v_h p = - \int_K \valueProj v_h \nabla p + \sum_{e \subset \partial K } \int_e \Confinterpolation v_h p n, \quad \forall \, p \in \prob_{\polOrder-1 }(K)^2
    \end{align}
    for any $v_h \in \vemSpace^K$.
We denote with $\Confinterpolation$ the interpolation into a $H^1$ conforming VEM space of order $\polOrder-1$.
We use the ``lazy'' version of the serendipity spaces discussed in \cite{da_veiga_serendipity_2015}. The dofs for this space were mentioned in Remark \ref{rmk: serendipity dofs} and for order $\polOrder-1$ are described by the dof tuple $(0,-1,\polOrder-3,-1,\polOrder-4)$. Therefore the dofs are a subset of the dofs defining the nonconforming $C^1$ space. The vertex values and the $\polOrder-3$ moments on the edges uniquely define $\Confinterpolation v|_e\in \prob_{\polOrder-1}(e)$ so that the gradient projection given above is computable using the dofs for the  $C^{1}$ nonconforming space.
\end{definition}

\ifthenelse{\boolean{thesis}}
{
    \begin{remark}
        This change to the gradient projection could also be achieved by replacing the edge projection $\edgeProj v_h$ with $(\Confinterpolation v_h)|_e$ which is dof compatible (see Definition \ref{defn: dof compatible}) with the exception that $\edgeProj q = q|_e$ only holds for $q\in\prob_{\polOrder-1}(K)$. The resulting virtual element space satisfies $\vemSpace\subset H^1_0 (\Omega)$ but only $\prob_{\polOrder-1} (\cT_{h})\subset\vemSpace$. The $L^2$ projection properties of the value and the hessian projection are still satisfied (in fact they are both not changed by the use of $\Confinterpolation$ for the edge projection). But the property given in Lemma \ref{lemma: l2 projections assumptions satisfied} does not hold anymore for the gradient projection. While, due to the continuity of the discrete function space, we could use many of the bounds from the previous section avoiding the scaling with $\varepsilon^{-1}$, we cannot use all of the results due to the missing polynomial exactness. Therefore, we briefly describe a convergence proof based on using the original $C^1$ nonconforming VEM space with the modified gradient projection given above.
    \end{remark}
}
{
}

The following property is now obtained for the modified gradient projection. 
\begin{lemma}\label{lemma: H4 L2 property}
    For the modified gradient projection detailed in Definition \ref{defn: modified grad proj} it holds that
    \begin{align*}
        \gradProj v_h = \cP^{\polOrder-2}_K (D \Confinterpolation v_h) \quad \text{ for any } v_h \in \localVEMSpace.
    \end{align*}
\end{lemma}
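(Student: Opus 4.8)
The plan is to establish the identity by the same bookkeeping used for the gradient part of Lemma~\ref{lemma: l2 projections assumptions satisfied}, the only difference being that the edge contribution now carries $\Confinterpolation v_h$ in place of the edge projection. I would start from the defining relation \eqref{eqn: modified grad proj}, insert the $L^2$ identity $\valueProj v_h = \cP^{\polOrder}_K v_h$ that is available for $v_h\in\localVEMSpace$ by Lemma~\ref{lemma: l2 projections assumptions satisfied}, and then reverse the integration by parts so that the volume and boundary terms reassemble into $\int_K \nabla(\Confinterpolation v_h)\cdot p$. Testing against polynomials of the appropriate degree then identifies $\gradProj v_h$ with $\cP^{\polOrder-2}_K(D\Confinterpolation v_h)$, exactly the analogue of the $s=1$ case of the earlier lemma but with $\Confinterpolation v_h$ replacing $v_h$.

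Concretely, I would fix $p\in\prob_{\polOrder-2}(K)^2$, i.e. a test polynomial of the degree matching the claimed range. In the volume term $\int_K (\valueProj v_h)(\nabla\cdot p)$ one has $\nabla\cdot p\in\prob_{\polOrder-3}(K)\subset\prob_{\polOrder}(K)$, so the $L^2$ property of $\valueProj$ lets me replace $\valueProj v_h$ by $v_h$. The next, and central, step is to replace $v_h$ by $\Confinterpolation v_h$ in this volume integral: this should be legitimate because $\Confinterpolation v_h$ shares with $v_h$ all the degrees of freedom of the order-$(\polOrder-1)$ conforming space (vertex values and edge moments up to degree $\polOrder-3$, together with the interior moments), so that the interior moments of $\Confinterpolation v_h$ and $v_h$ against $\nabla\cdot p$ coincide. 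The boundary sum already contains $\Confinterpolation v_h$, so after this replacement the right-hand side reads $-\int_K(\Confinterpolation v_h)(\nabla\cdot p)+\sum_{e\subset\partial K}\int_e(\Confinterpolation v_h)(p\cdot n)$, which is precisely $\int_K\nabla(\Confinterpolation v_h)\cdot p$ by Green's formula. As this holds for every $p\in\prob_{\polOrder-2}(K)^2$ and $\cP^{\polOrder-2}_K$ is the $L^2(K)$ projection, the stated equality follows, provided one also checks that $\gradProj v_h$ carries no component of degree $\polOrder-1$, so that its action on $\prob_{\polOrder-2}(K)^2$ determines it completely.

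The hard part will be the degree bookkeeping, since the value projection lives in $\prob_{\polOrder}(K)$ whereas $\Confinterpolation$ maps into the order-$(\polOrder-1)$ serendipity space whose interior moments are only prescribed as degrees of freedom up to degree $\polOrder-4$. Replacing $v_h$ by $\Confinterpolation v_h$ against $\nabla\cdot p\in\prob_{\polOrder-3}(K)$ therefore requires controlling the single interior moment of degree $\polOrder-3$ that is not itself a degree of freedom of the serendipity space but is instead recovered from the boundary data; I would justify this using the serendipity reconstruction together with the polynomial exactness $\Confinterpolation q=q$ for $q\in\prob_{\polOrder-1}(K)$. The same care is needed to rule out a spurious degree-$(\polOrder-1)$ part of $\gradProj v_h$, so that restricting the test space to $\prob_{\polOrder-2}(K)^2$ loses no information. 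Verifying these moment identities is where essentially all the work lies, the remaining manipulations being the routine Green's-formula bookkeeping already performed in the proof of Lemma~\ref{lemma: l2 projections assumptions satisfied}.
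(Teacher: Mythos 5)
The paper states this lemma without proof, so I am judging your sketch on its own terms rather than against an argument of the authors. Your overall strategy --- test the defining relation \eqref{eqn: modified grad proj} against $p\in\prob_{\polOrder-2}(K)^2$, use $\valueProj v_h=\cP^{\polOrder}_K v_h$ from Lemma \ref{lemma: l2 projections assumptions satisfied} to replace $\valueProj v_h$ by $v_h$, swap $v_h$ for $\Confinterpolation v_h$ in the volume term, and undo Green's formula --- is the natural analogue of the $s=1$ case of Lemma \ref{lemma: l2 projections assumptions satisfied} and is surely the intended route. The problem is that the two steps you yourself flag as ``where essentially all the work lies'' are left unproved, and the justification you offer for the first of them does not work as written. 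You claim $\int_K(\Confinterpolation v_h-v_h)\,q=0$ for $q=\nabla\cdot p\in\prob_{\polOrder-3}(K)$ because $\Confinterpolation v_h$ and $v_h$ share the degrees of freedom of the order-$(\polOrder-1)$ space. But the serendipity dof tuple $(0,-1,\polOrder-3,-1,\polOrder-4)$ prescribes interior moments only up to degree $\polOrder-4$, so matching dofs forces agreement of interior moments one degree short of what you need. The degree-$(\polOrder-3)$ interior moment of $\Confinterpolation v_h$ is whatever the serendipity reconstruction assigns from the boundary and lower-order data, whereas the corresponding moment of $v_h$ is an independent quantity; the two need not coincide for a general $v_h\in\localVEMSpace$, and the polynomial exactness $\Confinterpolation q=q$ on $\prob_{\polOrder-1}(K)$ cannot bridge this because $v_h$ is not a polynomial.

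The second issue you flag is equally real and equally unaddressed: Definition \ref{defn: modified grad proj} determines $\gradProj v_h$ by testing against all of $\prob_{\polOrder-1}(K)^2$, so to conclude that $\gradProj v_h$ has no component outside $\prob_{\polOrder-2}(K)^2$ you must show that the right-hand side of \eqref{eqn: modified grad proj} annihilates the $L^2$-orthogonal complement of $\prob_{\polOrder-2}(K)^2$ inside $\prob_{\polOrder-1}(K)^2$; there $\nabla\cdot p$ has degree up to $\polOrder-2$ and the same moment mismatch reappears, now two degrees beyond what the serendipity dofs control. To close the argument you need an additional ingredient: for instance, note that all interior moments of $v_h$ up to degree $\polOrder$ are in fact computable through $\valueProj v_h=\cP^{\polOrder}_K v_h$ and arrange for the interpolation (or for the volume term in \eqref{eqn: modified grad proj}) to use those moments, or else prove only the weaker identity $\int_K\gradProj v_h\cdot p=\int_K D\Confinterpolation v_h\cdot p$ for test polynomials of the degree that the matched moments genuinely support and verify that this weaker statement still suffices for Theorem \ref{thm: H4 energy norm convergence}. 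As it stands, your proposal correctly locates the obstacles but does not overcome them.
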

\red{Our discrete bilinear form $a_h(\cdot,\cdot)$ is now chosen as in Definition \ref{eqn: discrete bilinear form} with coefficients $\kappa(x) = \epsilon^2$, $\beta(x) =1$, and  $\gamma(x) =0$ but using the modified gradient projection operator as detailed in Definition \ref{defn: modified grad proj}.
So for any ${u_h, v_h \in \localVEMSpace}$, define the \emph{local discrete bilinear form} $a_h^K $ as 
    \begin{equation} \label{eqn: modified local form}
        \begin{split}
            a_h^K (u_h,v_h) := \epsilon^2 &\int_K \hessProj u_h : \hessProj v_h + \int_K \gradProj u_h \cdot \gradProj v_h + S^K ( u_h - \valueProj u_h , v_h - \valueProj v_h ).
        \end{split}    
    \end{equation}
where we choose the same stabilization term $S^K$ presented previously.}

To prove convergence in the energy norm for this modified scheme we use the ideas seen in \cite{wang2006modified,wang_robust_nodate}. We consider a modified bilinear form \red{ $b(\cdot,\cdot)$ } by changing the lower order contribution. 
\begin{align*}
    b(w,v) := \epsilon^2 \int_{\Omega} D^2 w : D^2 v + \int_{\Omega} D \Confinterpolation w \cdot D \Confinterpolation v 
\end{align*}
for all $w,v \in H^2_0(\Omega)$. Using $b(\cdot,\cdot)$ we can prove a 
a Strang-type lemma similar to Theorem \ref{thm: a-priori error bound energy norm}.
\begin{theorem}[Abstract a priori error bound]
    Let $\polOrder \geq 1$ be an integer. \strike{Under Assumption} \red{Let $u$ be the solution to problem \eqref{eqn: cts problem} with coefficients $\kappa(x)=\epsilon^2$, $\beta(x)=1$, and $\gamma(x)=0$. Suppose that $u_h \in \vemSpace$ is the solution to \eqref{eqn: discrete problem} using the modified local bilinear form \eqref{eqn: modified local form}}. \red{Under Assumptions \ref{assumptions: discrete assumptions},} it holds that
    \begin{equation}\label{eqn: strang H4}
        \begin{split}
            \vertiii{u-u_h}_{h} \leq \ &
            \inf_{v_h \in \vemSpace} \vertiii{u-v_h}_{h} + \sup_{\substack{w_h \in \vemSpace \\ w_h \neq 0}} \frac{|\langle f_h,  w_h \rangle - (f,\Confinterpolation w_h)|}{\vertiii{w_h}_{h}} 
            + \sup_{\substack{w_h \in \vemSpace \\ w_h \neq 0}} \frac{|b(u,w_h)-(f,\Confinterpolation w_h)|}{\vertiii{w_h}_{h}}
            \\
            &+ \inf_{p \in \prob_{\polOrder}(\cT_h)} \Big[ \vertiii{u-p}_{h} + \sum_{K \in \cT_h} \sup_{\substack{w_h \in \vemSpace^K \\ w_h \neq 0}} \frac{|b^K(p,w_h) - a_h^K(p,w_h)|}{\vertiii{w_h}_{K}} \Big].
        \end{split}
    \end{equation}
\end{theorem}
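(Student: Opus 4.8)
The plan is to mirror the standard Strang argument already used for Theorem~\ref{thm: a-priori error bound energy norm}, replacing the continuous form $a$ by the modified form $b$ and the load $(f,w_h)$ by $(f,\Confinterpolation w_h)$. First I would exploit the coercivity of $a_h$ with respect to $\vertiii{\cdot}_h$ inherited from the stability assumption~\ref{assumption: stability property}. Fixing $v_h\in\vemSpace$, setting $w_h=u_h-v_h$, and invoking the discrete equation~\eqref{eqn: discrete problem} for $a_h(u_h,w_h)$, one starts from
\begin{align*}
  \vertiii{u_h-v_h}_h^2 \leq a_h(u_h-v_h,w_h) = \langle f_h,w_h\rangle - a_h(v_h,w_h).
\end{align*}

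The heart of the proof is a telescoping insertion of intermediate quantities. For an arbitrary $p\in\prob_{\polOrder}(\cT_h)$ I would add and subtract, element by element, first $a_h^K(p,w_h)$, then $b^K(p,w_h)$, and finally the global terms $b(u,w_h)$ and $(f,\Confinterpolation w_h)$. Using $a_h(v_h,w_h)=\sum_K a_h^K(v_h,w_h)$ together with $\sum_K b^K(p,w_h)=b(p,w_h)$ (the broken form decomposes elementwise, with $\Confinterpolation$ understood locally) and $-b(p,w_h)=-b(u,w_h)+b(u-p,w_h)$, the right-hand side regroups into
\begin{align*}
  \big(\langle f_h,w_h\rangle-(f,\Confinterpolation w_h)\big)
  + \big((f,\Confinterpolation w_h)-b(u,w_h)\big)
  + b(u-p,w_h)
  + \sum_K a_h^K(p-v_h,w_h)
  + \sum_K\big(b^K(p,w_h)-a_h^K(p,w_h)\big).
\end{align*}
The first two groups are exactly the load-consistency term and the modified nonconformity term of~\eqref{eqn: strang H4}, and the last sum is the variational-crime term. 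For $b(u-p,w_h)$ and $\sum_K a_h^K(p-v_h,w_h)$ I would invoke continuity of $b$ and $a_h$ in the energy norm to obtain $\vertiii{u-p}_h\vertiii{w_h}_h$ and $\vertiii{p-v_h}_h\vertiii{w_h}_h$. A triangle inequality $\vertiii{p-v_h}_h\leq\vertiii{u-p}_h+\vertiii{u-v_h}_h$, division by $\vertiii{w_h}_h$, the further triangle inequality $\vertiii{u-u_h}_h\leq\vertiii{u-v_h}_h+\vertiii{u_h-v_h}_h$, and finally taking the infima over $v_h$ and $p$ and the suprema over $w_h$ then yield~\eqref{eqn: strang H4}.

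The main obstacle will be establishing continuity of the modified form $b$ \emph{uniformly in} $\epsilon$ with respect to $\vertiii{\cdot}_h=(\epsilon^2|\cdot|_{2,h}^2+|\cdot|_{1,h}^2)^{1/2}$. The biharmonic part is immediate by Cauchy--Schwarz, but the lower-order contribution $\sum_K\int_K D\Confinterpolation u\cdot D\Confinterpolation w_h$ requires the $H^1$-stability bound $|\Confinterpolation v|_{1,h}\lesssim|v|_{1,h}$ for the conforming serendipity interpolation, so that this term is controlled by $|u|_{1,h}\,|w_h|_{1,h}\leq\vertiii{u}_h\vertiii{w_h}_h$ with a constant independent of $\epsilon$. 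I would also emphasise that using $b$ rather than $a$ is essential here: since the modified gradient projection satisfies $\gradProj v_h=\cP^{\polOrder-2}_K(D\Confinterpolation v_h)$ by Lemma~\ref{lemma: H4 L2 property} rather than projecting $Dv_h$ itself, only $b^K$ (and not $a^K$) is consistent with $a_h^K$ on polynomials, which is precisely what will later permit an optimal, $\epsilon$-robust bound on the variational-crime term.
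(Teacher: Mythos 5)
Your proposal is correct and follows essentially the same route as the paper's own (omitted) proof: coercivity of $a_h$, the telescoping insertion of $a_h^K(p,w_h)$ and $b^K(p,w_h)$, regrouping into the load, nonconformity, approximation and variational-crime terms, continuity of $a_h$ and $b$, and the two triangle inequalities. Your extra remark on the $\epsilon$-uniform continuity of $b$ via $H^1$-stability of $\Confinterpolation$ makes explicit a step the paper passes over with the bare assertion $\vertiii{v}_{b,h}\leq\vertiii{v}_{h}$, so nothing is missing.
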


As in the previous section we can bound each term in the above abstract estimate to obtain an error estimate in powers of $h$ and $\varepsilon$.

\begin{theorem}\label{thm: H4 energy norm convergence}
    Assume that \ref{mesh} - \ref{assump: star shaped wrt a ball} hold, defined in Assumptions \ref{assumptions: discrete assumptions} and \ref{assumption: mesh regularity}. 
    Let $\polOrder \geq 2$ and \red{suppose that} \strike{let} $u \in H^{\polOrder+1}(\Omega)$ \red{is} \strike{be} the solution to the continuous problem \eqref{eqn: cts problem} \red{with coefficients $\kappa(x)=\epsilon^2$, $\beta(x)=1$, and $\gamma(x)=0$.} \strike{and.} Suppose that $u_h \in \vemSpace$ is the solution to the discrete problem \eqref{eqn: discrete problem} using the modified gradient projection (Definition \ref{defn: modified grad proj}) in the discrete bilinear form. 
    Assume that $f \in H^{\polOrder-1}(\Omega)$. 
    Then, under these assumptions it follows that
    \begin{align*}
        \vertiii{u-u_h}_{h} \leq C \big\{ \epsilon h^{\polOrder-1}|u|_{\polOrder+1} + h^{\polOrder-1}|u|_{\polOrder} + h^{\polOrder-1} (|f|_{\polOrder-1} + |f|_{\polOrder-2}) \big\}
    \end{align*}
    for a constant $C$ independent of $h$.
\end{theorem}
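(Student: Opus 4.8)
The plan is to apply the abstract a priori bound \eqref{eqn: strang H4} and estimate each of its four terms, taking care throughout to track the powers of $\epsilon$ so that no negative power of $\epsilon$ ever appears. The only structural tools needed for this bookkeeping are the two elementary inequalities $\epsilon|w_h|_{2,h}\leq\vertiii{w_h}_h$ and $|w_h|_{1,h}\leq\vertiii{w_h}_h$, both immediate from $\vertiii{w_h}_h^2=\epsilon^2|w_h|_{2,h}^2+|w_h|_{1,h}^2$. These are exactly what let one factor of $\epsilon$ be absorbed when a biharmonic contribution is divided by the energy norm, while a lower-order contribution is divided by $|w_h|_{1,h}$ at no cost. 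Note that we retain the original $C^1$ nonconforming space, so $\prob_{\polOrder}(\cT_h)\subset\vemSpace$ and Lemma \ref{lemma: l2 projections assumptions satisfied} remain available; only the gradient projection entering the discrete form is modified.

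For the approximation term $\inf_{v_h}\vertiii{u-v_h}_h$ I would take $v_h$ to be the canonical VEM interpolant of $u$ and split the energy norm, estimating the $H^2$ part with full regularity, $|u-v_h|_{2,h}\lesssim h^{\polOrder-1}|u|_{\polOrder+1}$, and the $H^1$ part with one degree less, $|u-v_h|_{1,h}\lesssim h^{\polOrder-1}|u|_{\polOrder}$; this already yields the leading contributions $\epsilon h^{\polOrder-1}|u|_{\polOrder+1}$ and $h^{\polOrder-1}|u|_{\polOrder}$. The variational crime term is where the constancy of the coefficients ($\kappa=\epsilon^2$, $\beta=1$, $\gamma=0$) pays off. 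Choosing $p=\cP^{\polOrder}_K u$ elementwise, the stabilisation vanishes by Remark \ref{remark: polynomial consistency}, and the Hessian part of $b^K(p,w_h)-a_h^K(p,w_h)$ cancels exactly, since $\hessProj p=D^2p\in\prob_{\polOrder-2}(K)$ is reproduced by $\cP^{\polOrder-2}_K$ and the projection is self-adjoint. For the gradient part I would invoke Lemma \ref{lemma: H4 L2 property}, which identifies the modified $\gradProj$ with $\cP^{\polOrder-2}_K(D\Confinterpolation\,\cdot)$ on both arguments, reducing the difference to the orthogonality remainder $\int_K(I-\cP^{\polOrder-2}_K)D\Confinterpolation p\cdot(I-\cP^{\polOrder-2}_K)D\Confinterpolation w_h$, which the order-$(\polOrder-1)$ serendipity interpolation estimate bounds by $h^{\polOrder-1}|u|_{\polOrder}|w_h|_{1,K}$; dividing by $\vertiii{w_h}_K\geq|w_h|_{1,K}$ keeps it $\epsilon$-free.

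The heart of the argument, and the main obstacle, is the consistency term $|b(u,w_h)-(f,\Confinterpolation w_h)|$, which must be shown to behave like $\epsilon h^{\polOrder-1}|u|_{\polOrder+1}+h^{\polOrder-1}|u|_{\polOrder}$ plus $f$-dependent remainders, with no $\epsilon^{-1}$ anywhere. I would split $b(u,w_h)$ into its biharmonic part $\epsilon^2\sum_K\int_K D^2u:D^2w_h$ and its lower-order part $\sum_K\int_K D\Confinterpolation u\cdot D\Confinterpolation w_h$. Because $\Confinterpolation u,\Confinterpolation w_h\in H^1_0(\Omega)$ are genuinely conforming, the lower-order part carries no interelement jumps; comparing it with the exact $\int_\Omega Du\cdot Dw_h$ obtained by testing the equation leaves only the interpolation errors $D(u-\Confinterpolation u)$ and $D(w_h-\Confinterpolation w_h)$, bounded by $h^{\polOrder-1}|u|_{\polOrder}|w_h|_{1,h}$ and, crucially, free of $\epsilon$. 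The biharmonic part, after elementwise integration by parts, produces exactly the edge-jump terms of Lemma \ref{lemma: nonconformity proof} with the \emph{constant} coefficient $\kappa=\epsilon^2$, so all the $\nabla\kappa$ contributions drop out and Theorem \ref{thm: non conformity error without (H4)} gives $\epsilon^2h^{\polOrder-1}|u|_{\polOrder+1}|w_h|_{2,h}$; dividing by $\vertiii{w_h}_h\geq\epsilon|w_h|_{2,h}$ converts this into the robust $\epsilon h^{\polOrder-1}|u|_{\polOrder+1}$. For $\polOrder\geq3$ one has $u\in H^{\polOrder+1}\subset H^4$ and Lemma \ref{lemma: nonconformity proof} applies directly; for $\polOrder=2$, where only $u\in H^3$ is available, I would mimic the $\polOrder=2$ branch of Theorem \ref{thm: non conformity error without (H4)} and substitute $\epsilon^2\Delta^2u-\Delta u=f$ in place of the missing fourth derivatives, which is what generates the $f$-dependent contributions.

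Finally, for the load term $|\langle f_h,w_h\rangle-(f,\Confinterpolation w_h)|$ I would take the computable functional $\langle f_h,w_h\rangle=\sum_K(f,\valueProj w_h)_K=\sum_K(\cP^{\polOrder}_K f,w_h)_K$ and estimate the difference by inserting $L^2$-orthogonality exactly as in Lemma \ref{lemma: load term estimate}, dividing the resulting $h^{\polOrder-1}$-type bound by $\vertiii{w_h}_h\geq|w_h|_{1,h}$; together with the PDE substitution above this accounts for the full $h^{\polOrder-1}(|f|_{\polOrder-1}+|f|_{\polOrder-2})$. Collecting the four contributions and using $\epsilon\leq1$ to absorb the genuinely higher-order remainders yields the stated estimate. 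I expect the robustness bookkeeping in the consistency term — ensuring the biharmonic jumps keep the full $\epsilon^2$ while the $\Confinterpolation$-regularised lower-order term stays conforming and $\epsilon$-free — to be the delicate point; everything else is routine scaling.
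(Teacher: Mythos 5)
Your overall strategy is the same as the paper's: apply the modified Strang estimate \eqref{eqn: strang H4}, bound the approximation, load, consistency and variational-crime terms separately, and use $\epsilon|w_h|_{2,h}\leq\vertiii{w_h}_h$ together with $|w_h|_{1,h}\leq\vertiii{w_h}_h$ to keep every bound free of negative powers of $\epsilon$. The treatment of the approximation term, the load term, and the variational crime (Hessian terms cancelling by Lemma \ref{lemma: l2 projections assumptions satisfied}, gradient terms handled via Lemma \ref{lemma: H4 L2 property}) coincides with the paper's proof.

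The one place where your bookkeeping goes wrong is the decomposition of the consistency term $b(u,w_h)-(f,\Confinterpolation w_h)$. Testing the strong equation with $\Confinterpolation w_h\in H^1_0(\Omega)$ gives $(f,\Confinterpolation w_h)=\int_\Omega\big(-\epsilon^2 D(\Delta u)+Du\big)\cdot D\Confinterpolation w_h$, so the correct reference for the lower-order part is $\int_\Omega Du\cdot D\Confinterpolation w_h$, not the quantity $\int_\Omega Du\cdot Dw_h$ you name (which is not produced by testing at all, since $w_h\notin H^1_0(\Omega)$). With the correct reference the lower-order error is only $\sum_K\int_K D(\Confinterpolation u-u)\cdot D\Confinterpolation w_h\lesssim h^{\polOrder-1}|u|_{\polOrder}\vertiii{w_h}_h$, the paper's term $J_3$; the extra term $\int_\Omega Du\cdot D(w_h-\Confinterpolation w_h)$ that your split would introduce cannot be bounded by $h^{\polOrder-1}|u|_{\polOrder}|w_h|_{1,h}$ --- there is no orthogonality to exploit on the $u$ side, and $|w_h-\Confinterpolation w_h|_{1,K}$ only gains a power of $h$ at the price of $|w_h|_{2,K}$, i.e.\ of $\epsilon^{-1}$. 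Relatedly, the biharmonic part is not ``exactly the edge-jump terms of Lemma \ref{lemma: nonconformity proof}'': because $b$ is tested with $w_h$ while $f$ is tested with $\Confinterpolation w_h$, a single elementwise integration by parts leaves, in addition to the jump terms, the volume term $\epsilon^2\sum_K\int_K D(\Delta u)\cdot D(\Confinterpolation w_h-w_h)$. This term is harmless --- inserting $\cP^{\polOrder-3}_K(D\Delta u)$ and using $|\Confinterpolation w_h-w_h|_{1,K}\lesssim h\,|w_h|_{2,K}$ gives $\epsilon^2h^{\polOrder-1}|u|_{\polOrder+1}|w_h|_{2,h}\leq\epsilon h^{\polOrder-1}|u|_{\polOrder+1}\vertiii{w_h}_h$ --- but it must be accounted for. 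Note also that since only one integration by parts is performed, $u\in H^3(\Omega)$ suffices and no separate $\polOrder=2$ branch mimicking Theorem \ref{thm: non conformity error without (H4)} is needed. Once these two points are repaired, your argument coincides with the paper's proof.
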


\begin{proof}    
The proof follows similar arguments used in Section \ref{section: error analysis} and therefore we keep the presentation here brief.

It is straightforward to show the following two bounds
\begin{align*}
    \inf_{v_h \in \vemSpace} \vertiii{u-v_h}_{h} \leq \vertiii{u-\cP^{\polOrder}u}_{h} &\leq C h^{\polOrder-1} (\epsilon |u|_{\polOrder+1} + |u|_{\polOrder})
    \\
    |\langle f_h, w_h \rangle - (f,\Confinterpolation w_h) | 
    &\leq C h^{\polOrder-1} (|f|_{\polOrder-1} + |f|_{\polOrder-2})\vertiii{w_h}_h.
\end{align*}
We focus on the remaining two terms involving the modified bilinear form $b(\cdot,\cdot)$. 

For the nonconformity type error term, it holds that
\begin{align*}
    |b(u,w_h) - (f, \Confinterpolation w_h) | \leq C ( \epsilon h^{\polOrder-1}|u |_{\polOrder+1} + h^{\polOrder-1} |u|_{\polOrder} )\vertiii{w_h}_{h}.
\end{align*}
We show this by multiplying the strong equation \eqref{eqn: perturbation problem} by $\Confinterpolation w_h\in H^1_0(\Omega)$ and integrating over $\Omega$ 
\begin{align*}
    (f,\Confinterpolation w_h) = \epsilon^2 \int_{\Omega} \Delta^2 u \Confinterpolation w_h - \int_{\Omega} \Delta u \Confinterpolation w_h 
       = \int_{\Omega} \big( - \epsilon^2 D(\Delta u) + Du \big) \cdot D \Confinterpolation w_h
\end{align*}
and similarly
\begin{align*}
    b(u,w_h) 
    &= -\sum_K\Big( \epsilon^2 \int_K D (\Delta u)\cdot D w_h - \epsilon^2 \int_{\partial K} \big( \Delta u - \partial_{ss} u \big) \partial_n w_h + \partial_{ns} u \partial_s w_h
       - \int_K D \Confinterpolation u\cdot D \Confinterpolation w_h
       \Big).
\end{align*}
Therefore, it holds that
\begin{align*}
    b(u,w_h) - (f,\Confinterpolation w_h) 
    = \ &
    \epsilon^2 \sum_{K \in \cT_h} \int_K D(\Delta u) \cdot D(\Confinterpolation w_h - w_h) 
    + \epsilon^2 \sum_{K \in \cT_h} \int_{\partial K} (\Delta u - \partial_{ss} u) \partial_n w_h + \partial_{ns} u \partial_s w_h
    \\
    &+
    \sum_{K \in \cT_h} \int_K D (\Confinterpolation u - u) \cdot D \Confinterpolation w_h  
    \\
    =: \ & J_1 + J_2 + J_3. 
\end{align*}
Notice that $J_2$ can be bounded as before in the nonconformity error proof (Theorem \ref{thm: non conformity error without (H4)}). Then for $J_1$ using Cauchy-Schwarz and standard estimates, 
\begin{align*}
    J_1 \leq \ & \epsilon^2 \sum_{K \in \cT_h} \| D(\Delta u) -\cP^{\polOrder-3}_K(D\Delta u) \|_{0,K} |\Confinterpolation w_h - w_h |_{1,K} 
    \leq C \epsilon h^{\polOrder-1} |u|_{\polOrder+1} \vertiii{w_h}_{h}.
\end{align*}
Now, for $J_3$ we bound this term using the optimal interpolation properties of the lower order VEM space as well as stability of the interpolation operator. 
\begin{align*}
    J_3 
    \leq \sum_{K \in \cT_h} | u - \Confinterpolation u |_{1,K} |\Confinterpolation w_h|_{1,K} 
    \leq C h^{\polOrder-1} |u|_{\polOrder} \vertiii{w_h}_{h}.
\end{align*}
Finally, for the last term in \eqref{eqn: strang H4} we can show that
\begin{align*}
    \inf_{p \in \prob_{\polOrder}(\cT_h)} \Big( \vertiii{u-p}_h + \sum_{K \in \cT_h} \sup_{\substack{w_h \in \vemSpace^K \\ w_h \neq 0}} \frac{|b^K(p,w_h)-a^K_h(p,w_h)|}{ \vertiii{w_h}_K } \Big)
    \leq C h^{\polOrder-1} (\epsilon |u|_{\polOrder+1} +|u|_{\polOrder}).
\end{align*}
This follows from
\begin{align*}
    b^K(\cP^{\polOrder}_K u,w_h) - a_h^K ( \cP^{\polOrder}_K u,w_h) = \ & \epsilon^2 \int_K D^2 \cP^{\polOrder}_K u : D^2 w_h 
    - \epsilon^2 \int_K \hessProj \cP^{\polOrder}_K u : \hessProj w_h 
    \\
    &+ \int_K D \Confinterpolation \cP^{\polOrder}_K u \cdot D \Confinterpolation w_h 
    - \int_K \gradProj \cP^{\polOrder}_K u \cdot \gradProj w_h.
\end{align*}
Due to the $L^2$ properties in Lemma \ref{lemma: l2 projections assumptions satisfied} the hessian terms cancel. 
Secondly, using the property in Lemma \ref{lemma: H4 L2 property} it holds that $\gradProj v_h = \cP^{\polOrder-2}_K (D \Confinterpolation v_h)$. Therefore for any $v_h \in \localVEMSpace$ it holds that
\begin{align*}
    \int_K \ & D \Confinterpolation \cP^{\polOrder}_K u \cdot D \Confinterpolation w_h - \int_K \gradProj \cP^{\polOrder}_K u \cdot \gradProj w_h
    \leq C h_K^{\polOrder-1}|u|_{\polOrder}\vertiii{w_h}_K.
\end{align*}
Therefore, the result follows from combining all of the intermediary results.
\end{proof}

\red{The next corollary details all error estimates for the following spaces applied to the perturbation problem \eqref{eqn: perturbation problem}. We analyse the fully nonconforming space, $\Conenonconf$, with dof tuple ${(0,-1,\polOrder-3,\polOrder-2,\polOrder-4)}$, the $\CzeroconfSpace$ conforming space with dof tuple $(0,-1,\polOrder-2,\polOrder-2,\polOrder-4)$ (see Example \ref{example: dof tuples}), and finally the new modified scheme $\Conemod$ with dof tuple  $(0,-1,\polOrder-3,\polOrder-2,\polOrder-4)$. For completeness, we repeat the result obtained in Theorem \ref{thm: H4 energy norm convergence} for the modified scheme $\Conemod$. Recall that this space has the same dof tuple as the $\Conenonconf$ space however we define the discrete bilinear form for this problem using the modified gradient projection (Definition \ref{defn: modified grad proj}).} 
\strike{Finally, we finish this section by presenting the results for all the considered spaces applied} \strike{to the perturbation problem}  
Due to keeping track of the coefficients in the previous error analysis section, the following corollary is simply a consequence of Theorem \ref{thm: energy norm} and Theorem \ref{thm: H4 energy norm convergence}.

\begin{corollary}\label{thm: convergence for perturbation problem}
    Under the same assumptions as Theorem \ref{thm: energy norm} with $f\in H^{\polOrder-2}(\Omega)$ the approximate solution in the $C^1$ nonconforming space $\Conenonconf$ \red{of order $\polOrder$} satisfies
    \begin{align*}
        \vertiii{u-u_h}_h 
        &\leq
        C \big\{ ( h^{\polOrder-1} \epsilon + h^{\polOrder} )|u|_{\polOrder+1} + \frac{h^{\polOrder-1}}{\epsilon}  |u|_{\polOrder-1} + h^{\polOrder-1} |f|_{\polOrder-2} \big\}.
    \intertext{The solution in the $\CzeroconfSpace$ conforming space, assuming that $f \in H^{\polOrder-1}(\Omega)$, satisfies the following error estimate}
        \vertiii{u-u_h}_h 
        &\leq 
        C \big\{ ( h^{\polOrder-1} \epsilon + h^{\polOrder} )|u|_{\polOrder+1} + h^{\polOrder} |f|_{\polOrder-1} \big\}.
        \intertext{Finally, for the modified $C^1$ nonconforming scheme defined by the modified gradient projection in Definition \ref{defn: modified grad proj} and assuming that $f \in H^{\polOrder-2}(\Omega)$, it holds that}
        \vertiii{u-u_h}_{h} &\leq C \big\{ h^{\polOrder-1} \epsilon |u|_{\polOrder+1} + h^{\polOrder-1}|u|_{\polOrder} + h^{\polOrder-1} (|f|_{\polOrder-1} + |f|_{\polOrder-2}) \big\}. 
    \end{align*}
    In all three cases, the constant $C$ remains independent of $h$ and $\epsilon$.
\end{corollary}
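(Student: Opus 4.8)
The three estimates are obtained by specializing the general bounds of Theorem~\ref{thm: energy norm} and Theorem~\ref{thm: H4 energy norm convergence} to the perturbation coefficients $\kappa \equiv \epsilon^2$, $\beta \equiv 1$, $\gamma \equiv 0$ and carefully tracking the dependence on $\epsilon$. The plan is first to evaluate the coefficient-dependent constants appearing in those theorems, and then to substitute them into the respective estimate for each of the three spaces.

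First I would record the lower bounds $\kappa_0 = \epsilon^2$, $\beta_0 = 1$, $\gamma_0 = 0$, which feed into the norm-equivalence constants introduced before Theorem~\ref{thm: variational crime}. Since $0 < \epsilon \le 1$, these give $\eta_2 = 1/\epsilon$ and $\eta_1 = \eta_0 = 1$. Because the coefficients are \emph{constant}, all their Sobolev seminorms of positive order vanish, so $\|\kappa\|_{W^{\polOrder-1,\infty}} = \|\kappa\|_{L^\infty} = \epsilon^2$, $\|\beta\|_{W^{\polOrder,\infty}} = 1$, and $\|\gamma\|_{W^{\polOrder+1,\infty}} = 0$. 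Substituting into the definitions from Theorem~\ref{thm: variational crime} and Theorem~\ref{thm: energy norm} then yields $c_2 = \epsilon + \eta_2\epsilon^2 = 2\epsilon$, $c_1 = 1 + 1 = 2$, $c_0 = 0$, $c_3 = \eta_2\epsilon^2 = \epsilon$, and the crucial $c_4 = \eta_2\|\beta\|_{W^{\polOrder-2,\infty}} = 1/\epsilon$.

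For the plain $C^1$ nonconforming space I would insert these values into estimate~\eqref{eqn: error energy norm}, using $f\in H^{\polOrder-2}(\Omega)$ so that $r = \polOrder-3$ and the load term becomes $h^{\polOrder-1}|f|_{\polOrder-2}$; collecting the $h$- and $\epsilon$-powers reproduces the first bound, with the non-robust contribution $\frac{h^{\polOrder-1}}{\epsilon}|u|_{\polOrder-1}$ arising precisely from $c_4$ and the $c_3$ contribution $\epsilon\,h^{\polOrder-1}|u|_{\polOrder}$ carrying the same powers of $h$ and $\epsilon$ as the leading term. For the $C^1$-$C^0$ conforming space, which satisfies $\vemSpace\subset H^1_0(\Omega)$, I would instead use the conforming estimate~\eqref{eqn: error energy norm cts}, in which the $c_3,c_4$ terms are absent; with $f\in H^{\polOrder-1}(\Omega)$ the load term is $h^{\polOrder}|f|_{\polOrder-1}$, giving the second, $\epsilon$-robust bound. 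The third, modified scheme requires no substitution at all: its estimate is exactly the conclusion of Theorem~\ref{thm: H4 energy norm convergence}, whose constant is already manifestly independent of both $h$ and $\epsilon$.

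The only genuinely delicate point is the bookkeeping of the product $\eta_2\,\|\kappa\|_{W^{\polOrder-1,\infty}} = \epsilon^{-1}\cdot\epsilon^2 = \epsilon$: it is the fact that $\kappa$ is \emph{constant} that makes every $\kappa$-derivative term collapse to a clean power of $\epsilon$ rather than to something degenerate, and the single surviving $\epsilon^{-1}$ comes from pairing $\eta_2$ with the order-zero coefficient $\beta\equiv 1$ in $c_4$. Verifying that no hidden $\epsilon$-dependence enters the generic constant $C$, i.e.\ that the constants from the interpolation, load, nonconformity and variational-crime estimates depend only on the mesh regularity parameter $\rho$ and on $\polOrder$, is then the remaining thing to confirm.
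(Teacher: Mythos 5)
Your proposal is correct and follows exactly the route the paper intends: the paper gives no separate argument beyond remarking that the corollary "is simply a consequence" of Theorems \ref{thm: energy norm} and \ref{thm: H4 energy norm convergence}, and your substitution of $\kappa_0=\epsilon^2$, $\beta_0=1$, $\gamma_0=0$ into $\eta_0,\eta_1,\eta_2$ and hence into $c_0,\dots,c_4$, together with the choice $r=\polOrder-3$ (resp.\ $r=\polOrder-2$) in the load term, reproduces all three bounds. Your observation that the $c_3$ contribution $\epsilon h^{\polOrder-1}|u|_{\polOrder}$ survives the substitution (and is silently absorbed in the paper's first displayed estimate) is accurate and does not affect the result.
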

\red{
\begin{remark}
Note that for $\epsilon=O(1)$ all methods converge in the energy norm with order $l-1$ while for $\epsilon=O(h)$ the convergence rate of the original $\Conenonconf$ method reduces to $O(h^{l-2})$ while the modified scheme retains the order $h^{l-1}$. The $H^1$ conformity of the $\CzeroconfSpace$ space leads to an improvement of the order to $O(h^l)$ but requires more degrees of freedom.
\end{remark}
}

\section{Numerical results}\label{section: numerical testing}
In this section we show some numerical results to verify the a priori bounds from the previous sections
for three virtual element spaces.
\begin{itemize}
\item $\Conenonconf$: the nonconforming space from \cite{antonietti_fully_2018,zhao_morley-type_2018} defined by the dof tuple $(0,-1,l-3,l-2,l-4)$ (see Example \ref{example: dof tuples}) and using the default projection operators (see Example \ref{ex: choice of value, edge, edge normal projections}).
\item $\Conemod$: the nonconforming space defined by the dof tuple $(0,-1,l-3,l-2,l-4)$ (see Example \ref{example: dof tuples}) and using the modified gradient projection (see Section \ref{section: perturbation problem}).
\item $\CzeroconfSpace$: the continuous space from \cite{zhao_nonconforming_2016,zhang_nonconforming_2020} defined by the dof tuple $(0,-1,l-2,l-2,l-4)$ (see Example \ref{example: dof tuples}) and using the default projection operators (see Example \ref{ex: choice of value, edge, edge normal projections}).
\end{itemize}
We focus on results for $l=2,3,4$ on both structured triangular grids and to demonstrate the flexibility of the virtual element method, on grids consisting of \red{Voronoi cells.} \red{Figure~\ref{fig: Voronoi grids} contains the images of the first few mesh refinements for this choice of grid.} 
\red{Grid data for all considered spaces and mesh refinements is shown in Table~\ref{table: grid data}.}
\strike{remapped hexagonal elements}
\strike{also used in}

We first study \red{convergence of the methods for a linear problem of the general form \eqref{eqn: cts problem} with varying coefficients.} As a second example we study the simple \strike{linear} \red{perturbation} problem \eqref{eqn: perturbation problem} with constant coefficients, and we show results with varying $h,\epsilon$.
 \red{ For both examples, we solve the problem on the domain $\Omega= (0,1)
 \times (0,1)$. The approximation errors are measured in the relative energy norm. }

Finally we highlight how our approach based on general projection operators allows us to handle nonlinear problems by showing results for the Cahn-Hilliard equation and the Willmore flow of graphs. \strike{and the Cahn-Hilliard equation.}

The code used to perform the simulations is based on the DUNE software framework \cite{dunegridpaperII}. We implemented our VEM approach within the module DUNE-FEM \cite{dedner2010generic}. This is an extension module for DUNE that provides interfaces for the implementation of general grid based numerical schemes on general unstructured grids. It is open source software implemented in \texttt{C++}.
\red{In our implementation we construct quadrature rules by subdividing polygons into triangles and applying a quadrature of sufficient order on each triangle.
The required order of the quadrature is the same as required by standard finite elements. The consistency error due to quadrature was studied in \cite{cangiani_conforming_2015} for the second order method and similar arguments can be applied here. Other possibly more efficient methods for constructing quadrature rules are available for general polygons, for example, \cite{antonietti2018fast}.} 
Recently a Python based frontend was added to DUNE \cite{dedner_dune_2018,dednerPythonBindings2020}.
The domain specific language UFL \cite{alnaes_unified_2012} can be used to describe mathematical models. A detailed tutorial including some VEM examples, e.g., for linear elasticity and also the Willmore flow example described here, showcases the flexibility of the approach \cite{dednerPythonBindings2020}. 

\begin{figure}[p]
    \centering
    \includegraphics[trim={1.75cm 0 2.75cm 0}, clip, width=0.3\textwidth]{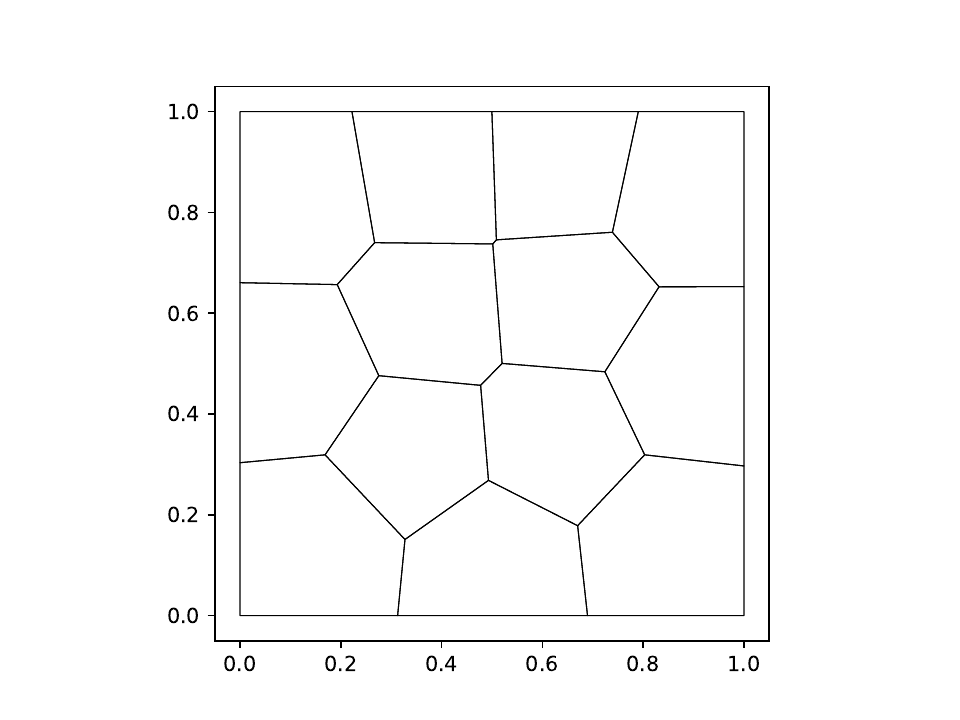}
    \includegraphics[trim={1.75cm 0 2.75cm 0}, clip, width=0.3\textwidth]{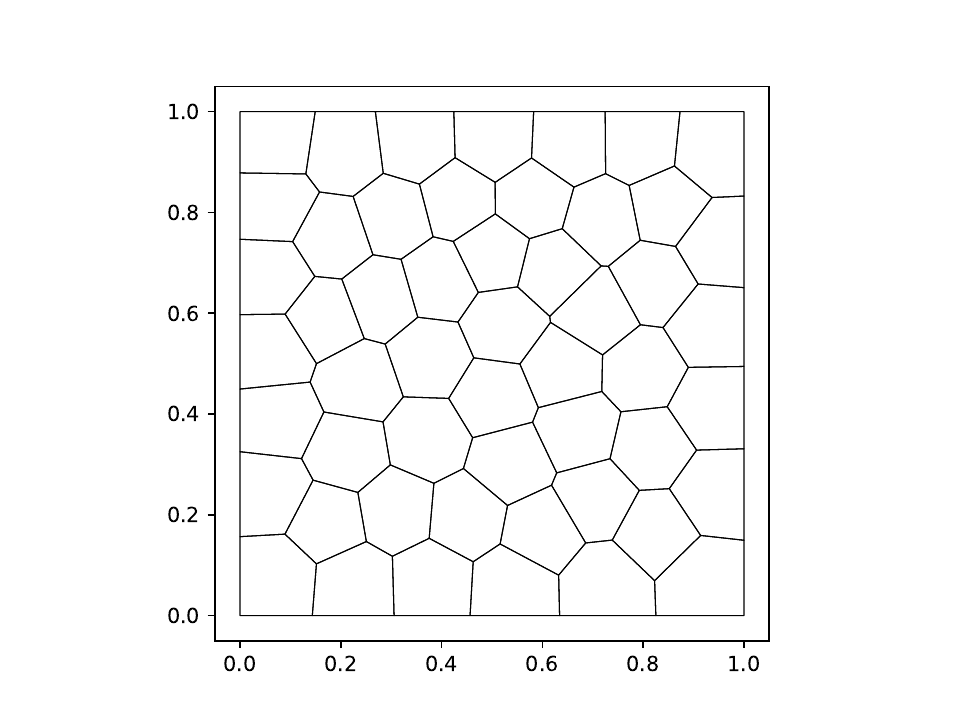}
    \includegraphics[trim={1.75cm 0 2.75cm 0}, clip, width=0.3\textwidth]{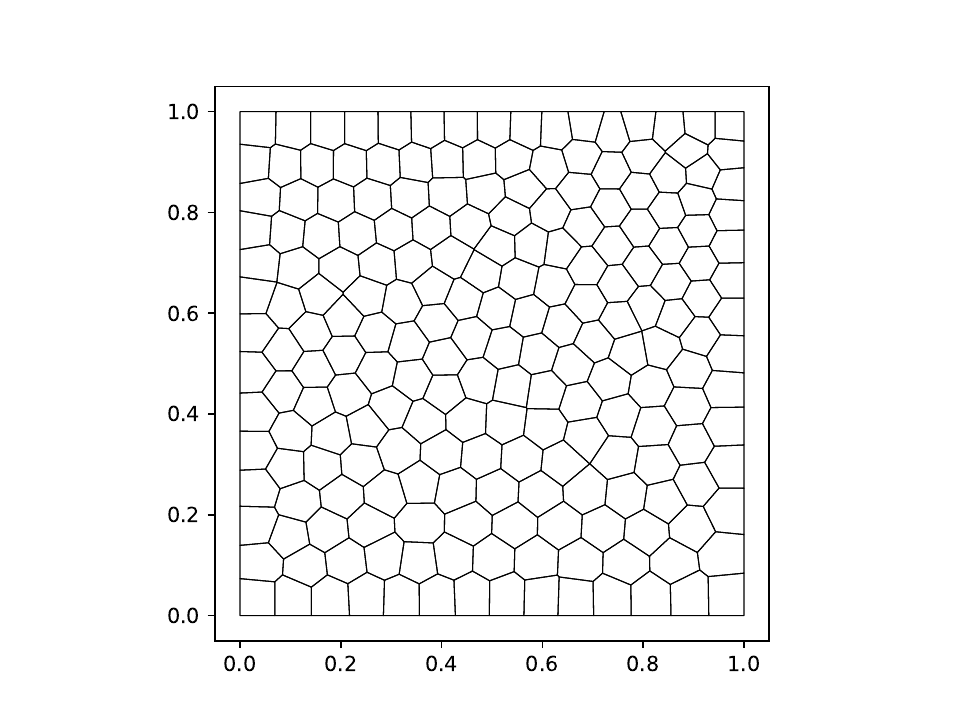}
    \caption{First three mesh refinements for the Voronoi grid on the domain $\Omega = (0,1)^2$.}
    \label{fig: Voronoi grids}
  \end{figure}

\begin{table}[!htbp]
  \centering
  \caption{Grid data and total number of degrees of freedom of the sequence
  of simplex meshes (top) and Voronoi meshes (below) for all three spaces $\Conenonconf$, $\Conemod$, and $\CzeroconfSpace$. 
  The second and third columns correspond to the number of grid elements and the mesh size, $h$, respectively. The final columns show the total number of degrees of freedom for the three VEM spaces, for each order $\polOrder=2,3,$ and $4$. Notice that the $\Conenonconf$ and $\Conemod$ spaces have the same dof tuple and hence share the same number of dofs for all polynomial orders and grids.}
  {
  \setlength{\tabcolsep}{4pt}
      {\scriptsize
      \begin{tabular}{lrr|rrr|rrr|rrr}
        \multicolumn{12}{c}{\emph{Simplex meshes grid data}} 
        \\[1em]
        \toprule
        {} & \multirow{2}{*}{grid size} & \multirow{2}{*}{mesh size} &  \multicolumn{3}{c|}{$\polOrder=2$} &  \multicolumn{3}{c|}{$\polOrder=3$} &  \multicolumn{3}{c}{$\polOrder=4$} \\
        & & & $\Conenonconf$ & $\Conemod$ & $\CzeroconfSpace$ & $\Conenonconf$ & $\Conemod$ & $\CzeroconfSpace$ & $\Conenonconf$ & $\Conemod$ & $\CzeroconfSpace$ \\
        \midrule
        0 &         18 &  0.471405 &             49 &              49 &             82 &            115 &             115 &            148 &                  199 &                  199 &                  232 \\
        1 &         72 &  0.235702 &            169 &             169 &            289 &            409 &             409 &            529 &                  721 &                  721 &                  841 \\
        2 &        288 &  0.117851 &            625 &             625 &           1081 &           1537 &            1537 &           1993 &                 2737 &                 2737 &                 3193 \\
        3 &       1152 &  0.058926 &           2401 &            2401 &           4177 &           5953 &            5953 &           7729 &                10657 &                10657 &                12433 \\
        4 &       4608 &  0.029463 &           9409 &            9409 &          16417 &          23425 &           23425 &          30433 &                42049 &                42049 &                49057 \\
        5 &      18432 &  0.014731 &          37249 &           37249 &          65089 &          92929 &           92929 &         120769 &               167041 &               167041 &               194881 \\
        6 &      73728 &  0.007366 &         148225 &          148225 &         259201 &         370177 &          370177 &         481153 &                  --- &                  --- &                  --- \\
        \bottomrule
        \\[2em]
        \multicolumn{12}{c}{\emph{Voronoi meshes grid data}} 
        \\[0.75em]
        \toprule
        {} & \multirow{2}{*}{grid size} & \multirow{2}{*}{mesh size} &  \multicolumn{3}{c|}{$\polOrder=2$} &  \multicolumn{3}{c|}{$\polOrder=3$} &  \multicolumn{3}{c}{$\polOrder=4$} \\
        & &  & $\Conenonconf$ & $\Conemod$ & $\CzeroconfSpace$ & $\Conenonconf$ & $\Conemod$ & $\CzeroconfSpace$ & $\Conenonconf$ & $\Conemod$ & $\CzeroconfSpace$ \\
        \midrule
        0 &         13 &  0.353416 &             68 &              68 &            108 &            148 &             148 &            188 &                  241 &                  241 &                  281 \\
        1 &         52 &  0.186351 &            263 &             263 &            420 &            577 &             577 &            734 &                  943 &                  943 &                 1100 \\
        2 &        208 &  0.098618 &           1043 &            1043 &           1668 &           2293 &            2293 &           2918 &                 3751 &                 3751 &                 4376 \\
        3 &        832 &  0.047502 &           4163 &            4163 &           6660 &           9157 &            9157 &          11654 &                14983 &                14983 &                17480 \\
        4 &       3328 &  0.025168 &          16643 &           16643 &          26628 &          36613 &           36613 &          46598 &                59911 &                59911 &                69896 \\
        5 &      13312 &  0.012486 &          66563 &           66563 &         106500 &         146437 &          146437 &         186374 &               239623 &               239623 &               279560 \\
        6 &      53248 &  0.006444 &         266243 &          266243 &         425988 &         585733 &          585733 &         745478 &                  --- &                  --- &                  --- \\
        \bottomrule
      \end{tabular}
      }
  }   
  \label{table: grid data} 
\end{table}

\subsection{Linear varying coefficient problem}

\red{We start by studying the more general linear fourth order problem with varying coefficients.}

\begin{example}\label{ex: varying coefficients}
    Consider problem \eqref{eqn: cts problem} using 
    \begin{equation*}
      \kappa(x,y) = \frac{1}{1+x^2+y^2}, \quad  \beta(x,y)=e^{-xy}, \quad \gamma(x,y)=\big(\sin(x^2+y^2)\big)^2.
    \end{equation*} 
    We \strike{again} choose the forcing so that \red{$u(x,y) = (\sin(2\pi x)\sin(2\pi y))^2$ } is the exact solution on the domain \red{$\Omega$ =}$(0,1)^2$. \strike{Overall the results show the same picture as we already saw for the simple constant coefficient setting with $\epsilon=1$} 
    \strike{so we only show results here for the simplex grid in Tables} 
    \red{ We show results on both the structured simplex grid and the Voronoi cells for polynomial orders $\polOrder = 2,3,4$ in Table~\ref{table: varying coeff}.} 
    All methods converge with the expected order of $\polOrder-1$ and produce very similar errors on a given grid with the $\CzeroconfSpace$ space requiring more degrees of freedom.
\end{example}

\ifthenelse{\boolean{oldResultsDisplay}}
{
  {\tiny
  \input{results_simplex/problemvarying-coefficients_l2.tex}
  \input{results_simplex/problemvarying-coefficients_l3.tex}
  \input{results_simplex/problemvarying-coefficients_l4.tex}
  }

  \FloatBarrier
}
{

     \begin{table}[!htbp]
    \centering
    \caption{
        Example \ref{ex: varying coefficients} for $\polOrder=2,3,4$
        (top to bottom) on simplex (left) and Voronoi grids (right). 
        The errors
        and eocs are computed with respect to the relative energy norm.
        The observed convergence rates of $\polOrder-1$ are in accordance with
        our convergence results summarized in Theorem~\ref{thm: energy norm}. 
        For a given grid all methods produce comparable errors but require a varying number
        of degrees of freedom as discussed in Table~\ref{table: grid data}.    
    }
    {
    \setlength{\tabcolsep}{4pt}
        {\scriptsize
        \begin{tabular*}{0.48\textwidth}{lcccccc}
            \toprule
            & \multicolumn{2}{c}{$\Conenonconf$} & \multicolumn{2}{c}{$\Conemod$} & \multicolumn{2}{c}{$\CzeroconfSpace$} \\
            {} & error &  eoc &  error &  eoc & error &  eoc \\
            \midrule
            0 &  1.4561e+00 &        --- &   1.4088e+00 &         --- &  1.2460e+00 &        --- \\
            1 &  9.5268e-01 &       0.61 &   9.4683e-01 &        0.57 &  8.3299e-01 &       0.58 \\
            2 &  5.5475e-01 &       0.78 &   5.5435e-01 &        0.77 &  4.8641e-01 &       0.78 \\
            3 &  2.9218e-01 &       0.92 &   2.9233e-01 &        0.92 &  2.5887e-01 &       0.91 \\
            4 &  1.4813e-01 &       0.98 &   1.4825e-01 &        0.98 &  1.3190e-01 &       0.97 \\
            5 &  7.4329e-02 &       0.99 &   7.4395e-02 &        0.99 &  6.5994e-02 &       1.00 \\
            6 &  3.7198e-02 &       1.00 &   3.7231e-02 &        1.00 &  3.2984e-02 &       1.00 \\
            \bottomrule 
            \\
            \toprule
            & \multicolumn{2}{c}{$\Conenonconf$} & \multicolumn{2}{c}{$\Conemod$} & \multicolumn{2}{c}{$\CzeroconfSpace$} \\
            {} & error &  eoc &  error &  eoc & error &  eoc \\
            \midrule
            0 &  8.8844e-01 &        --- &   8.8176e-01 &         --- &  9.6512e-01 &        --- \\
            1 &  2.6370e-01 &       1.75 &   2.6376e-01 &        1.74 &  2.6919e-01 &       1.84 \\
            2 &  7.5506e-02 &       1.80 &   7.5546e-02 &        1.80 &  7.5609e-02 &       1.83 \\
            3 &  2.0295e-02 &       1.90 &   2.0304e-02 &        1.90 &  2.0324e-02 &       1.90 \\
            4 &  5.2140e-03 &       1.96 &   5.2159e-03 &        1.96 &  5.2225e-03 &       1.96 \\
            5 &  1.3127e-03 &       1.99 &   1.3132e-03 &        1.99 &  1.3148e-03 &       1.99 \\
            6 &  3.2910e-04 &       2.00 &   3.2921e-04 &        2.00 &  3.2962e-04 &       2.00 \\
            \bottomrule
            \\
            \toprule
            & \multicolumn{2}{c}{$\Conenonconf$} & \multicolumn{2}{c}{$\Conemod$} & \multicolumn{2}{c}{$\CzeroconfSpace$} \\
            {} & error &  eoc &  error &  eoc & error &  eoc \\
            \midrule
            0 &  4.9566e-01 &        --- &   4.9328e-01 &         --- &  7.7064e-01 &        --- \\
            1 &  1.1762e-01 &       2.08 &   1.1708e-01 &        2.07 &  1.2327e-01 &       2.64 \\
            2 &  2.0309e-02 &       2.53 &   2.0245e-02 &        2.53 &  1.9575e-02 &       2.65 \\
            3 &  2.7562e-03 &       2.88 &   2.7503e-03 &        2.88 &  2.6595e-03 &       2.88 \\
            4 &  3.5277e-04 &       2.97 &   3.5212e-04 &        2.97 &  3.4121e-04 &       2.96 \\
            5 &  4.4191e-05 &       3.00 &   4.4113e-05 &        3.00 &  4.2735e-05 &       3.00 \\
            \bottomrule
        \end{tabular*}
        }
    \quad
        {\scriptsize
        \begin{tabular*}{0.48\textwidth}{lcccccc}
            \toprule
            & \multicolumn{2}{c}{$\Conenonconf$} & \multicolumn{2}{c}{$\Conemod$} & \multicolumn{2}{c}{$\CzeroconfSpace$} \\
            {} & error &  eoc &  error &  eoc & error &  eoc \\
            \midrule
            0 &  1.0681e+00 &        --- &   1.0673e+00 &         --- &  1.0757e+00 &        --- \\
            1 &  6.3424e-01 &       0.81 &   6.4477e-01 &        0.79 &  6.2755e-01 &       0.84 \\
            2 &  3.3367e-01 &       1.01 &   3.3724e-01 &        1.02 &  3.3168e-01 &       1.00 \\
            3 &  1.6561e-01 &       0.96 &   1.6618e-01 &        0.97 &  1.6376e-01 &       0.97 \\
            4 &  8.1813e-02 &       1.11 &   8.1914e-02 &        1.11 &  8.1217e-02 &       1.10 \\
            5 &  4.0523e-02 &       1.00 &   4.0547e-02 &        1.00 &  4.0366e-02 &       1.00 \\
            6 &  2.0073e-02 &       1.06 &   2.0080e-02 &        1.06 &  2.0028e-02 &       1.06 \\
            \bottomrule \\
            \toprule
            & \multicolumn{2}{c}{$\Conenonconf$} & \multicolumn{2}{c}{$\Conemod$} & \multicolumn{2}{c}{$\CzeroconfSpace$} \\
            {} & error &  eoc &  error &  eoc & error &  eoc \\
            \midrule
            0 &  8.0649e-01 &        --- &   8.0454e-01 &         --- &  8.1866e-01 &        --- \\
            1 &  4.4383e-01 &       0.93 &   4.4233e-01 &        0.93 &  4.4265e-01 &       0.96 \\
            2 &  1.8880e-01 &       1.34 &   1.8817e-01 &        1.34 &  1.8826e-01 &       1.34 \\
            3 &  5.8297e-02 &       1.61 &   5.8130e-02 &        1.61 &  5.8119e-02 &       1.61 \\
            4 &  1.3256e-02 &       2.33 &   1.3222e-02 &        2.33 &  1.3213e-02 &       2.33 \\
            5 &  2.5665e-03 &       2.34 &   2.5596e-03 &        2.34 &  2.5600e-03 &       2.34 \\
            6 &  5.4140e-04 &       2.35 &   5.3985e-04 &        2.35 &  5.4051e-04 &       2.35 \\
            \bottomrule
            \\
            \toprule
            & \multicolumn{2}{c}{$\Conenonconf$} & \multicolumn{2}{c}{$\Conemod$} & \multicolumn{2}{c}{$\CzeroconfSpace$} \\
            {} & error &  eoc &  error &  eoc & error &  eoc \\   
            \midrule
            0 &  7.7981e-01 &        --- &   7.7476e-01 &         --- &  7.6938e-01 &        --- \\
            1 &  1.9288e-01 &       2.18 &   1.9199e-01 &        2.18 &  1.9056e-01 &       2.18 \\
            2 &  2.6162e-02 &       3.14 &   2.6157e-02 &        3.13 &  2.6103e-02 &       3.12 \\
            3 &  2.6309e-03 &       3.14 &   2.6279e-03 &        3.15 &  2.6224e-03 &       3.15 \\
            4 &  3.0888e-04 &       3.37 &   3.0821e-04 &        3.37 &  3.0738e-04 &       3.38 \\
            5 &  3.8858e-05 &       2.96 &   3.8761e-05 &        2.96 &  3.8665e-05 &       2.96 \\
            \bottomrule
        \end{tabular*}
        }
    }
    \label{table: varying coeff}    
\end{table}

}

\subsection{Perturbation problem}
\red{We now study the perturbation problem considered in Section \ref{section: perturbation problem} for various values of $\epsilon \in (0,1]$ and mesh size $h$. The aim is to verify the convergence orders discussed in Section \ref{section: perturbation problem} especially the improved order of the new scheme $\Conemod$ compared to the original $\Conenonconf$ space. We are only considering problems without boundary layers, a discussion of problem \eqref{eqn: perturbation problem} with boundary layers can be found in \cite{zhang_nonconforming_2020}.}
\begin{example}\label{ex: perturbation problem}
  Consider problem \eqref{eqn: perturbation problem} with right hand side given by $f = \epsilon^2 \Delta^2 u - \Delta u$. As exact solution we use \red{$u(x,y) = (\sin(2\pi x)\sin(2\pi y))^2$ } which satisfies the Dirichlet boundary conditions on the domain $\Omega=(0,1)^2$. \strike{We start by taking $\epsilon = 1$ to study the convergence of our three VEM spaces.} The results on both the structured triangular grid and the Voronoi grid for the values $\epsilon=10^{-2}$, $\epsilon=10^{-8}$ and for polynomial orders $\polOrder=2,3$ are summarized in Table~\ref{table: perturbation results}, the results for $l=4$ are in line with expectations.

  
\end{example}

   \begin{table}[!htbp]
    \centering
    \caption{
        Example \ref{ex: perturbation problem} with $\epsilon=10^{-2}$ ($\polOrder=2,3$ correspond to first two rows of tables) and $\epsilon=10^{-8}$ ($\polOrder=2,3$ correspond to last two rows of tables). 
        Relative energy errors and eocs on simplex grids (left column) and Voronoi grids (right column) are shown.
        The observed convergence rates of $\polOrder-1$ for $\frac{\epsilon}{h}$ large are in accordance with our convergence results summarized in Corollary~\ref{thm: convergence for perturbation problem} and confirm results from Example~\ref{ex: varying coefficients}. 
        Note that the errors on a given grid are larger for the standard $\Conenonconf$ method compared to the other two due to lower convergence on the coarser grids (see two top tables).
        In the case of small $\epsilon$ shown in the lower two rows of tables, the convergence of the three methods differs considerably as expected from Corollary~\ref{thm: convergence for perturbation problem}.
        The $\Conenonconf$ method converges with order $\polOrder-2$, the $\Conemod$ methods converges with order $\polOrder-1$ while the $\CzeroconfSpace$ method converges with order $\polOrder$ but requires more degrees of freedom (see Table~\ref{table: grid data}).   
    }
    {
    \setlength{\tabcolsep}{4pt}
        {\scriptsize
        \begin{tabular*}{0.48\textwidth}{lcccccc}
            \toprule
            & \multicolumn{2}{c}{$\Conenonconf$} & \multicolumn{2}{c}{$\Conemod$} & \multicolumn{2}{c}{$\CzeroconfSpace$} \\
            {} & error &  eoc &  error &  eoc & error &  eoc \\
            \midrule
            0 &  8.6620e-01 &        --- &   8.6005e-01 &         --- &  6.5528e-01 &        --- \\
            1 &  6.5538e-01 &       0.40 &   5.8568e-01 &        0.55 &  2.3493e-01 &       1.48 \\
            2 &  8.3186e-01 &      -0.34 &   3.7994e-01 &        0.62 &  7.6406e-02 &       1.62 \\
            3 &  7.5196e-01 &       0.15 &   2.7569e-01 &        0.46 &  3.1562e-02 &       1.28 \\
            4 &  5.1395e-01 &       0.55 &   1.6165e-01 &        0.77 &  1.7037e-02 &       0.89 \\
            5 &  2.9106e-01 &       0.82 &   8.3481e-02 &        0.95 &  9.1284e-03 &       0.90 \\
            6 &  1.5119e-01 &       0.94 &   4.2045e-02 &        0.99 &  4.6942e-03 &       0.96 \\
            \bottomrule 
            \\
            \toprule
            & \multicolumn{2}{c}{$\Conenonconf$} & \multicolumn{2}{c}{$\Conemod$} & \multicolumn{2}{c}{$\CzeroconfSpace$} \\
            {} & error &  eoc &  error &  eoc & error &  eoc \\
            \midrule
            0 &  5.3974e-01 &        --- &   4.9962e-01 &         --- &  3.9094e-01 &        --- \\
            1 &  3.0540e-01 &       0.82 &   2.2405e-01 &        1.16 &  8.2201e-02 &       2.25 \\
            2 &  1.4972e-01 &       1.03 &   6.5526e-02 &        1.77 &  1.2373e-02 &       2.73 \\
            3 &  5.8172e-02 &       1.36 &   1.7814e-02 &        1.88 &  2.6009e-03 &       2.25 \\
            4 &  1.8078e-02 &       1.69 &   4.7389e-03 &        1.91 &  6.8328e-04 &       1.93 \\
            5 &  4.8713e-03 &       1.89 &   1.2134e-03 &        1.97 &  1.8207e-04 &       1.91 \\
            6 &  1.2455e-03 &       1.97 &   3.0544e-04 &        1.99 &  4.6780e-05 &       1.96 \\
            \bottomrule
            \\[2em]
            \toprule
            & \multicolumn{2}{c}{$\Conenonconf$} & \multicolumn{2}{c}{$\Conemod$} & \multicolumn{2}{c}{$\CzeroconfSpace$} \\
            {} & error &  eoc &  error &  eoc & error &  eoc \\
            \midrule
            0 &  8.5877e-01 &        --- &   8.5917e-01 &         --- &  6.4609e-01 &        --- \\
            1 &  6.5097e-01 &       0.40 &   5.7891e-01 &        0.57 &  2.2744e-01 &       1.51 \\
            2 &  8.9793e-01 &      -0.46 &   3.3200e-01 &        0.80 &  7.3729e-02 &       1.63 \\
            3 &  1.0011e+00 &      -0.16 &   1.7292e-01 &        0.94 &  2.0751e-02 &       1.83 \\
            4 &  1.0297e+00 &      -0.04 &   8.7387e-02 &        0.98 &  4.6582e-03 &       2.16 \\
            5 &  1.0370e+00 &      -0.01 &   4.3811e-02 &        1.00 &  1.1082e-03 &       2.07 \\
            6 &  1.0389e+00 &      -0.00 &   2.1920e-02 &        1.00 &  2.7320e-04 &       2.02 \\
            \bottomrule
            \\
            \toprule
            & \multicolumn{2}{c}{$\Conenonconf$} & \multicolumn{2}{c}{$\Conemod$} & \multicolumn{2}{c}{$\CzeroconfSpace$} \\
            {} & error &  eoc &  error &  eoc & error &  eoc \\
            \midrule
            0 &  5.3752e-01 &        --- &   4.9718e-01 &         --- &  3.8838e-01 &        --- \\
            1 &  3.1739e-01 &       0.76 &   2.2192e-01 &        1.16 &  7.8389e-02 &       2.31 \\
            2 &  1.7739e-01 &       0.84 &   5.6318e-02 &        1.98 &  9.3143e-03 &       3.07 \\
            3 &  9.1963e-02 &       0.95 &   1.4081e-02 &        2.00 &  1.1839e-03 &       2.98 \\
            4 &  4.6623e-02 &       0.98 &   3.6443e-03 &        1.95 &  1.4773e-04 &       3.00 \\
            5 &  2.3454e-02 &       0.99 &   9.2168e-04 &        1.98 &  1.8430e-05 &       3.00 \\
            6 &  1.1760e-02 &       1.00 &   2.3113e-04 &        2.00 &  2.3017e-06 &       3.00 \\
            \bottomrule
        \end{tabular*}
        }
    \quad
        {\scriptsize
        \begin{tabular*}{0.48\textwidth}{lcccccc}
            \toprule
            & \multicolumn{2}{c}{$\Conenonconf$} & \multicolumn{2}{c}{$\Conemod$} & \multicolumn{2}{c}{$\CzeroconfSpace$} \\
            {} & error &  eoc &  error &  eoc & error &  eoc \\
            \midrule
            0 &  8.4665e-01 &        --- &   8.1639e-01 &         --- &  8.2302e-01 &        --- \\
            1 &  2.3787e-01 &       1.98 &   3.1611e-01 &        1.48 &  2.3225e-01 &       1.98 \\
            2 &  1.5968e-01 &       0.63 &   1.4442e-01 &        1.23 &  6.7207e-02 &       1.95 \\
            3 &  1.3462e-01 &       0.23 &   6.2298e-02 &        1.15 &  2.4902e-02 &       1.36 \\
            4 &  1.0860e-01 &       0.34 &   3.1004e-02 &        1.10 &  1.1665e-02 &       1.19 \\
            5 &  6.0813e-02 &       0.83 &   1.5579e-02 &        0.98 &  5.8586e-03 &       0.98 \\
            6 &  2.5955e-02 &       1.29 &   7.8050e-03 &        1.04 &  2.9069e-03 &       1.06 \\
            \bottomrule \\
            \toprule
            & \multicolumn{2}{c}{$\Conenonconf$} & \multicolumn{2}{c}{$\Conemod$} & \multicolumn{2}{c}{$\CzeroconfSpace$} \\
            {} & error &  eoc &  error &  eoc & error &  eoc \\
            \midrule
            0 &  4.6137e-01 &        --- &   4.1236e-01 &         --- &  4.3589e-01 &        --- \\
            1 &  2.1238e-01 &       1.21 &   6.1344e-02 &        2.98 &  6.0831e-02 &       3.08 \\
            2 &  1.2734e-01 &       0.80 &   1.0752e-02 &        2.74 &  1.0077e-02 &       2.83 \\
            3 &  6.2350e-02 &       0.98 &   3.1436e-03 &        1.68 &  2.9684e-03 &       1.67 \\
            4 &  2.6306e-02 &       1.36 &   1.3123e-03 &        1.38 &  1.2834e-03 &       1.32 \\
            5 &  8.4011e-03 &       1.63 &   4.6954e-04 &        1.47 &  4.6350e-04 &       1.45 \\
            6 &  2.1721e-03 &       2.05 &   1.1816e-04 &        2.09 &  1.1686e-04 &       2.08 \\
            \bottomrule
            \\[2em]
            \toprule
            & \multicolumn{2}{c}{$\Conenonconf$} & \multicolumn{2}{c}{$\Conemod$} & \multicolumn{2}{c}{$\CzeroconfSpace$} \\
            {} & error &  eoc &  error &  eoc & error &  eoc \\   
            \midrule
            0 &  8.4602e-01 &        --- &   8.1569e-01 &         --- &  8.2237e-01 &        --- \\
            1 &  2.2460e-01 &       2.07 &   3.1350e-01 &        1.49 &  2.2157e-01 &       2.05 \\
            2 &  1.5228e-01 &       0.61 &   1.4361e-01 &        1.23 &  5.1926e-02 &       2.28 \\
            3 &  1.3845e-01 &       0.13 &   6.1106e-02 &        1.17 &  1.1795e-02 &       2.03 \\
            4 &  1.4227e-01 &      -0.04 &   2.9420e-02 &        1.15 &  2.8687e-03 &       2.23 \\
            5 &  1.3795e-01 &       0.04 &   1.4242e-02 &        1.03 &  7.1092e-04 &       1.99 \\
            6 &  1.3925e-01 &      -0.01 &   6.9820e-03 &        1.08 &  1.7657e-04 &       2.11 \\
            \bottomrule
            \\
            \toprule
            & \multicolumn{2}{c}{$\Conenonconf$} & \multicolumn{2}{c}{$\Conemod$} & \multicolumn{2}{c}{$\CzeroconfSpace$} \\
            {} & error &  eoc &  error &  eoc & error &  eoc \\   
            \midrule
            0 &  4.5379e-01 &        --- &   4.0824e-01 &         --- &  4.3257e-01 &        --- \\
            1 &  1.7132e-01 &       1.52 &   5.3975e-02 &        3.16 &  5.3958e-02 &       3.25 \\
            2 &  7.6736e-02 &       1.26 &   7.5116e-03 &        3.10 &  7.0484e-03 &       3.20 \\
            3 &  3.3586e-02 &       1.13 &   1.1450e-03 &        2.58 &  8.9886e-04 &       2.82 \\
            4 &  1.6043e-02 &       1.16 &   2.1365e-04 &        2.64 &  1.1410e-04 &       3.25 \\
            5 &  7.8712e-03 &       1.02 &   4.8106e-05 &        2.13 &  1.4131e-05 &       2.98 \\
            6 &  3.9540e-03 &       1.04 &   1.1722e-05 &        2.13 &  1.7628e-06 &       3.15 \\
            \bottomrule
        \end{tabular*}
        }
    }  
    \label{table: perturbation results}  
\end{table}

\ifthenelse{\boolean{oldResultsDisplay}}
{
  \begin{figure}[p]\centering
    \includegraphics[width=0.483\textwidth]{results_simplex/problemperturbation_1e-02_l2Energyerr-eps-converted-to.pdf}
    \includegraphics[width=0.483\textwidth]{results_simplex/problemperturbation_1e-02_l2Energyeoc-eps-converted-to.pdf}
    \\
    \includegraphics[width=0.483\textwidth]{results_simplex/problemperturbation_1e-02_l3Energyerr-eps-converted-to.pdf}
    \includegraphics[width=0.483\textwidth]{results_simplex/problemperturbation_1e-02_l3Energyeoc-eps-converted-to.pdf}
    \\
    \includegraphics[width=0.483\textwidth]{results_simplex/problemperturbation_1e-02_l4Energyerr-eps-converted-to.pdf}
    \includegraphics[width=0.483\textwidth]{results_revised/simplex/problemperturbation_1e-02_l4Energyeoc-eps-converted-to.pdf}
    \caption{Perturbation problem with $\epsilon=10^{-2}$ and $l=2,3,4$ (top to bottom). Left column shows error in the energy norm with respect to number of degrees of freedom for the three spaces. Right column shows experimental order of convergence again for the energy norm versus the grid spacing $h$.}
    \label{fig: perteps1e-2 simplex}
  \end{figure}

  \begin{figure}[p]\centering
    \includegraphics[width=0.483\textwidth]{results_simplex/problemperturbation_1e-08_l2Energyerr-eps-converted-to.pdf}
    \includegraphics[width=0.483\textwidth]{results_simplex/problemperturbation_1e-08_l2Energyeoc-eps-converted-to.pdf}
    \\
    \includegraphics[width=0.483\textwidth]{results_simplex/problemperturbation_1e-08_l3Energyerr-eps-converted-to.pdf}
    \includegraphics[width=0.483\textwidth]{results_simplex/problemperturbation_1e-08_l3Energyeoc-eps-converted-to.pdf}
    \\
    \includegraphics[width=0.483\textwidth]{results_simplex/problemperturbation_1e-08_l4Energyerr-eps-converted-to.pdf}
    \includegraphics[width=0.483\textwidth]{results_revised/simplex/problemperturbation_1e-08_l4Energyeoc-eps-converted-to.pdf}
    \caption{Perturbation problem with $\epsilon=10^{-8}$ and $l=2,3,4$ (top to bottom). Left column shows error in the energy norm with respect to number of degrees of freedom for the three spaces. Right column shows experimental order of convergence again for the energy norm versus the grid spacing $h$.}
    \label{fig: perteps1e-8 simplex}
    \end{figure}
}
{
}

\subsection{Nonlinear problems}
We conclude this section with some preliminary results, which demonstrate that the method discussed in this paper is well suited to solve complex nonlinear fourth order problems. We choose \red{two} problems which have been studied in the literature, both energy minimization problems solved by a gradient descent algorithm. In both cases the mathematical models are time dependent fourth order problems. We use a Rothe approach in which first the problem is discretized in time. The resulting spatial problems are stationary fourth order problems with linearizations of the form studied here. 
\strike{In both cases we only show results for the standard space with.}

Note that no special linearization is required, we use a standard Newton solver to handle the fourth order nonlinear problems arising from the implicit time discretization. Please note that this final subsection is only a first investigation into applying our approach to more complicated settings and it is beyond the scope of this paper to do a detailed analysis. We therefore restrict the presentation to the $\Conenonconf$ space with $\polOrder=3$. We show results on triangular grids on $\Omega=(0,1)^2$. 
\strike{We show results on both a structured simplex grid and a Voronoi grid on $\Omega=(0,1)^2$.}

\begin{example}\label{ex: Cahn Hilliard}
  For our first problem we solve the Cahn-Hilliard equation using a fully implicit backward Euler method to discretize the problem in time. A virtual element method for this problem was studied in \cite{antonietti_c1_2016} where a different approach for defining the projection operators, requiring a special linearization, restricted the method to $l=2$.

  Let $\psi : \R \rightarrow \R$ be defined as $\psi(x) = \frac{(1-x^2)^2}{4}$ and define $\phi(x) = \psi(x)^{\prime}$.
  Then, the Cahn-Hilliard problem reads as follows: find $u\colon \Omega \times [0,T] \rightarrow \R$ such that
  \newpage
  \begin{align*}
    \partial_t u  - \Delta (\phi(u)-\gamma^2 \Delta u) = 0 \quad &\emph{in} \ \Omega \times [0,T] ,\\
    u(\cdot,0) = u_0(\cdot)  \quad &\emph{in} \ \Omega,\\
    \partial_n u = \partial_n \big( \phi(u) - \gamma^2\Delta u \big) = 0  \quad &\emph{on} \ \partial \Omega \times [0,T].
  \end{align*} 
  Note that this problem requires slightly different boundary conditions compared to the problems studied so far.
  Some snapshots from a simulation with $\gamma=0.02$ on a $60 \times 60$ grid and time step $\tau=10^{-3}$ are displayed in Figure \ref{figure: CH example}. The initial conditions were a small perturbation in the centre of the domain. The first snapshot is taken at a point in time where the phase separation is already well developed. The following figures then show the usual coarsening ending with the red phase concentrated in approximately a circle in the centre of the domain.

  \begin{figure}[h] \centering
          \includegraphics[width=0.2\textwidth]{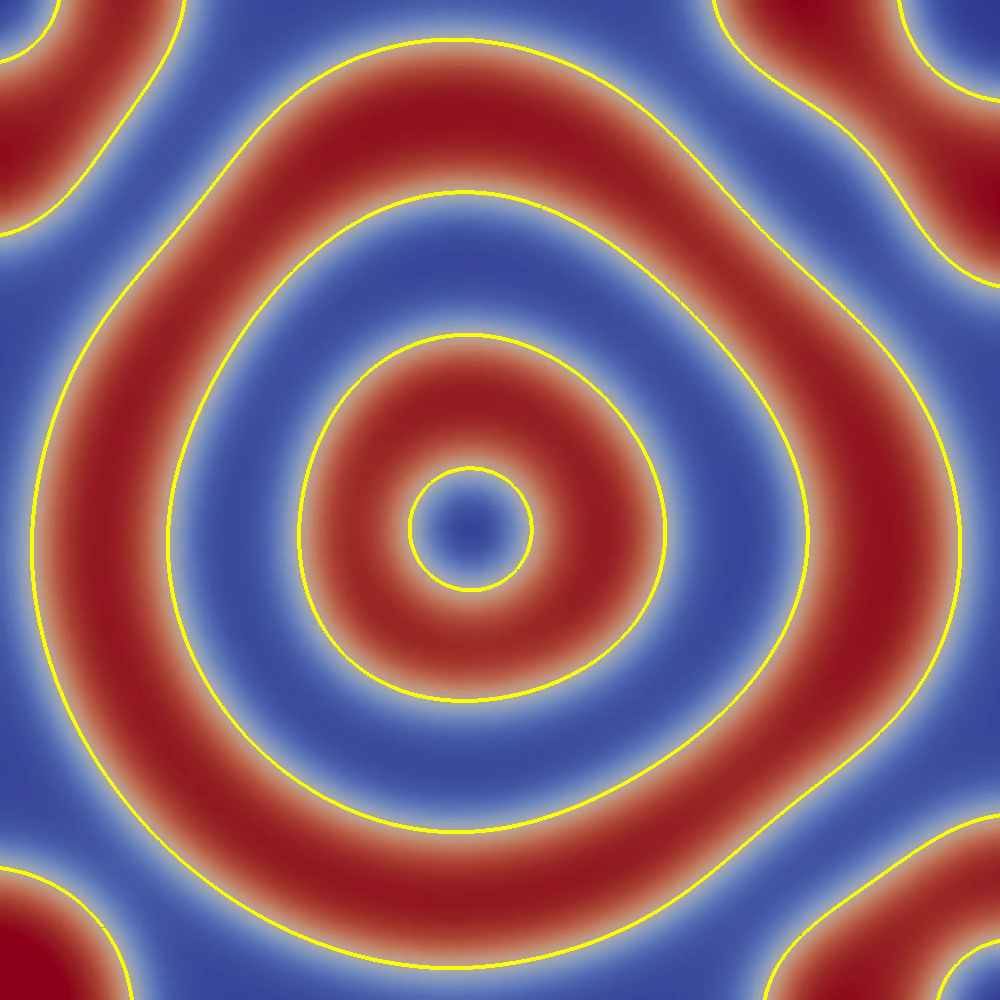}
          \includegraphics[width=0.2\textwidth]{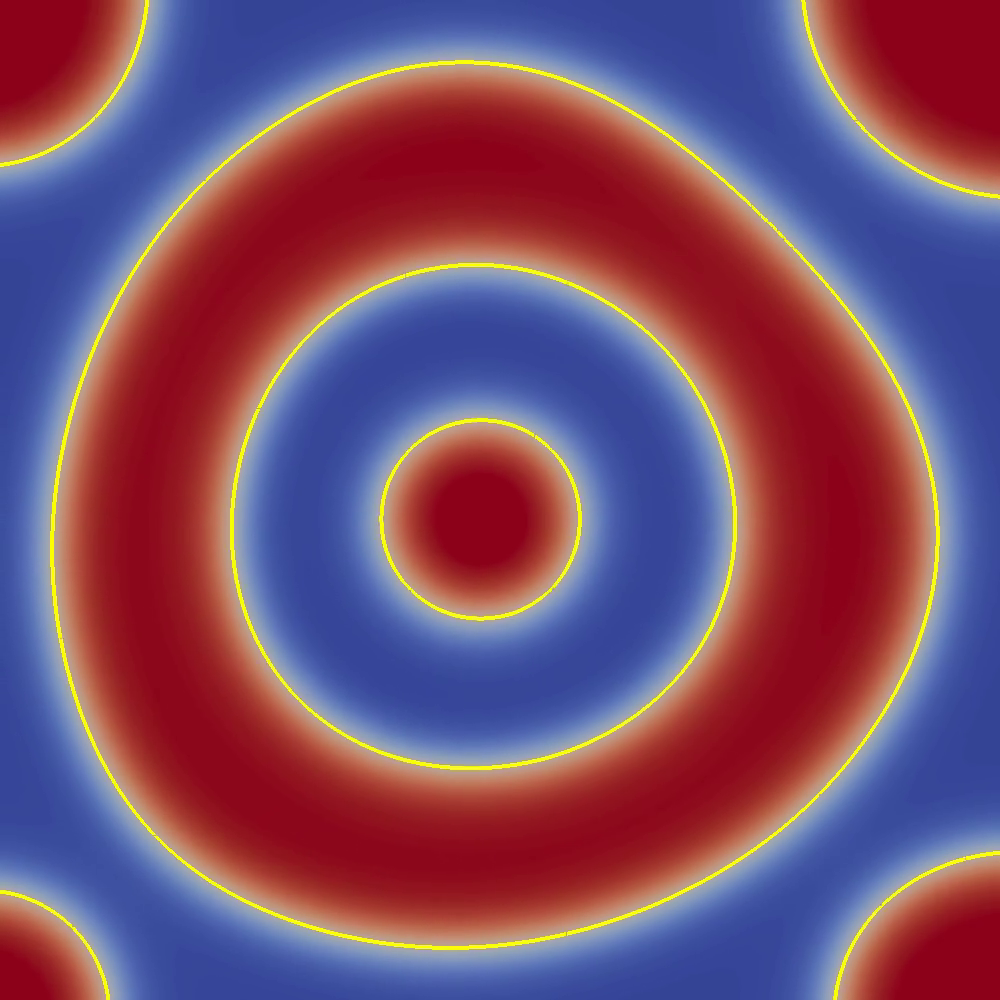}
          \includegraphics[width=0.2\textwidth]{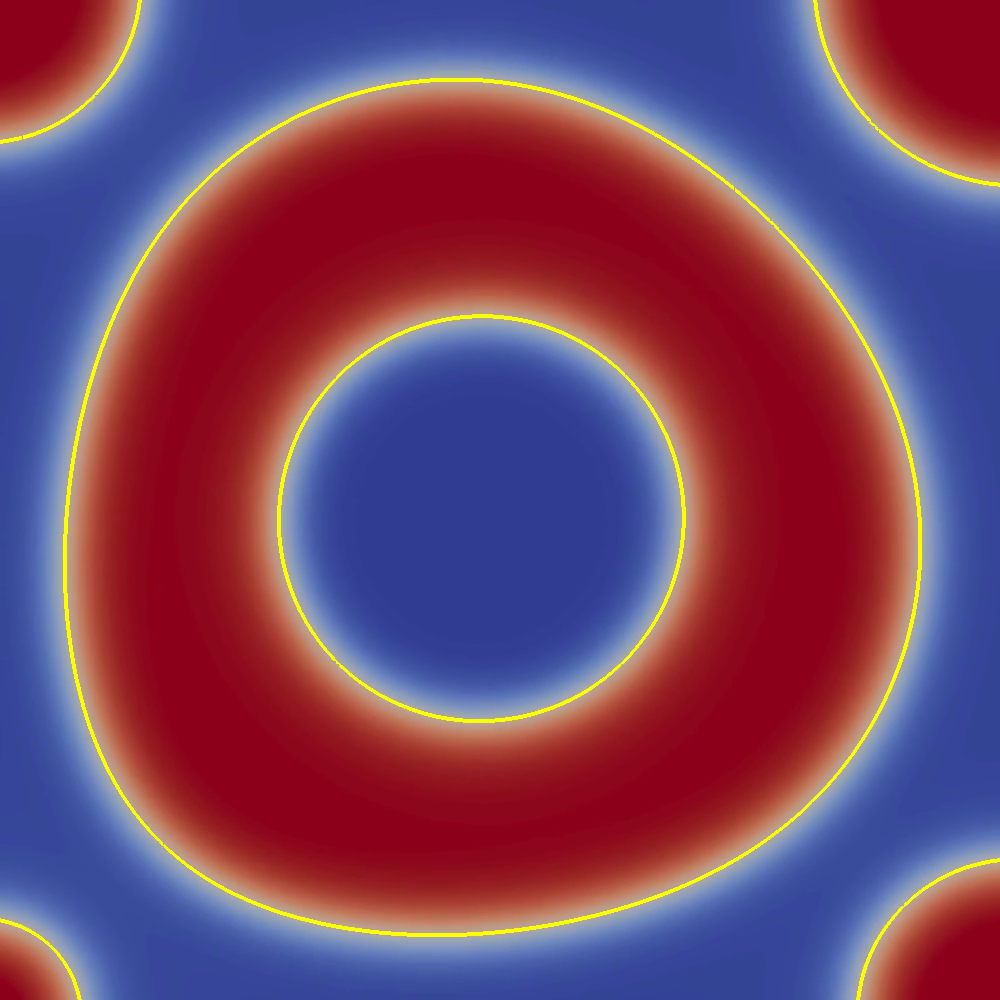} 
          \includegraphics[width=0.2\textwidth]{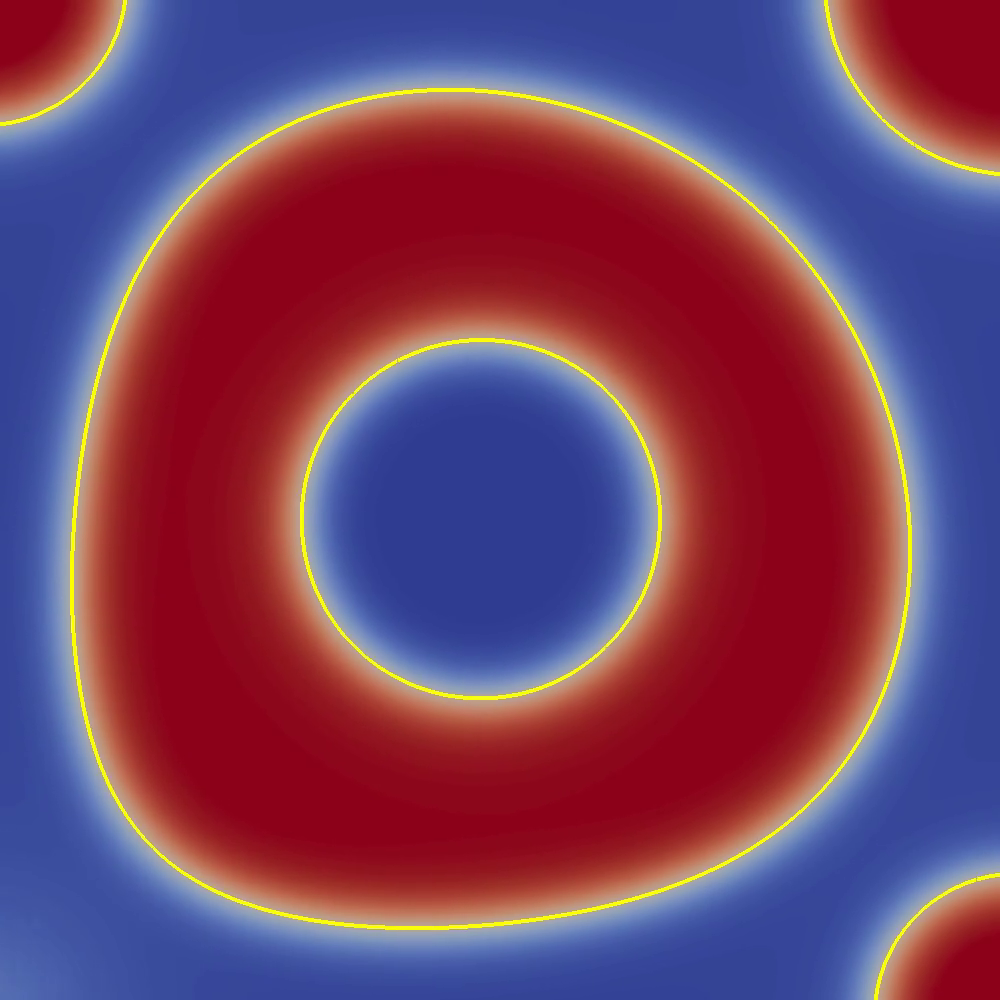} \\
          \includegraphics[width=0.2\textwidth]{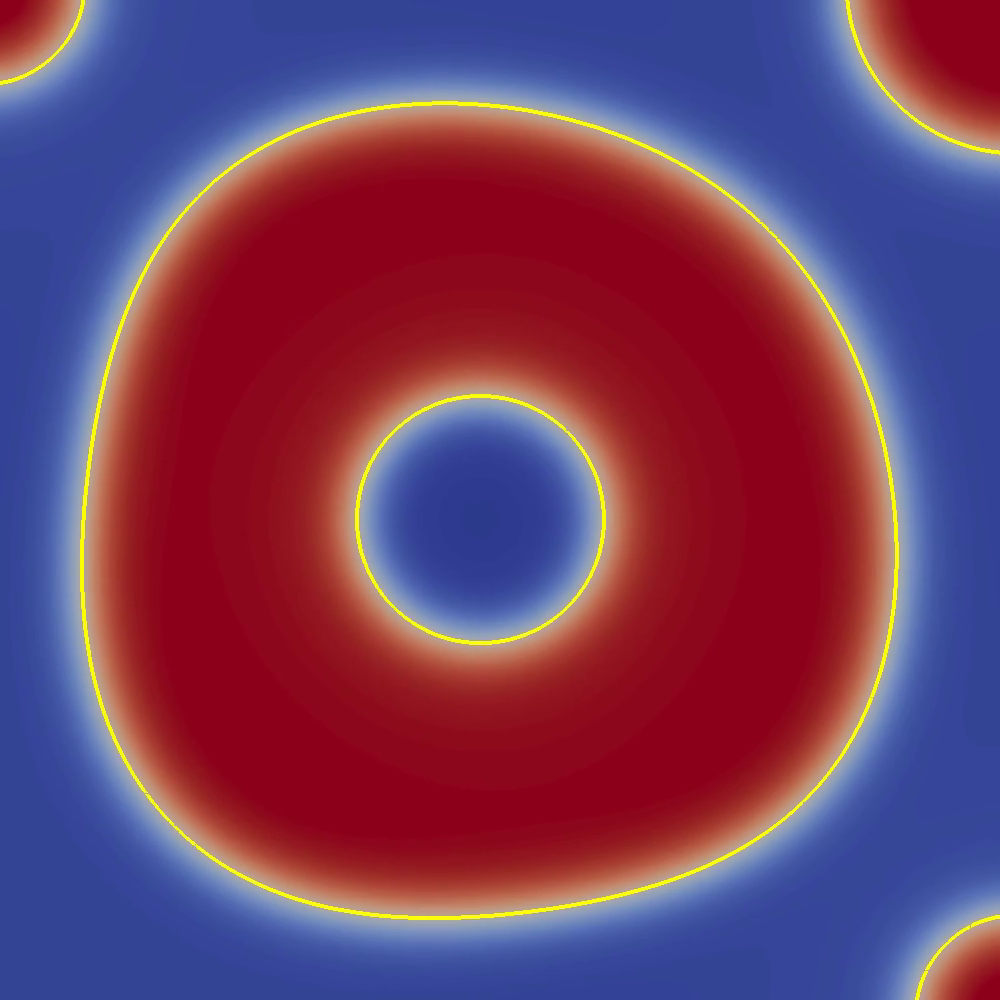} 
          \includegraphics[width=0.2\textwidth]{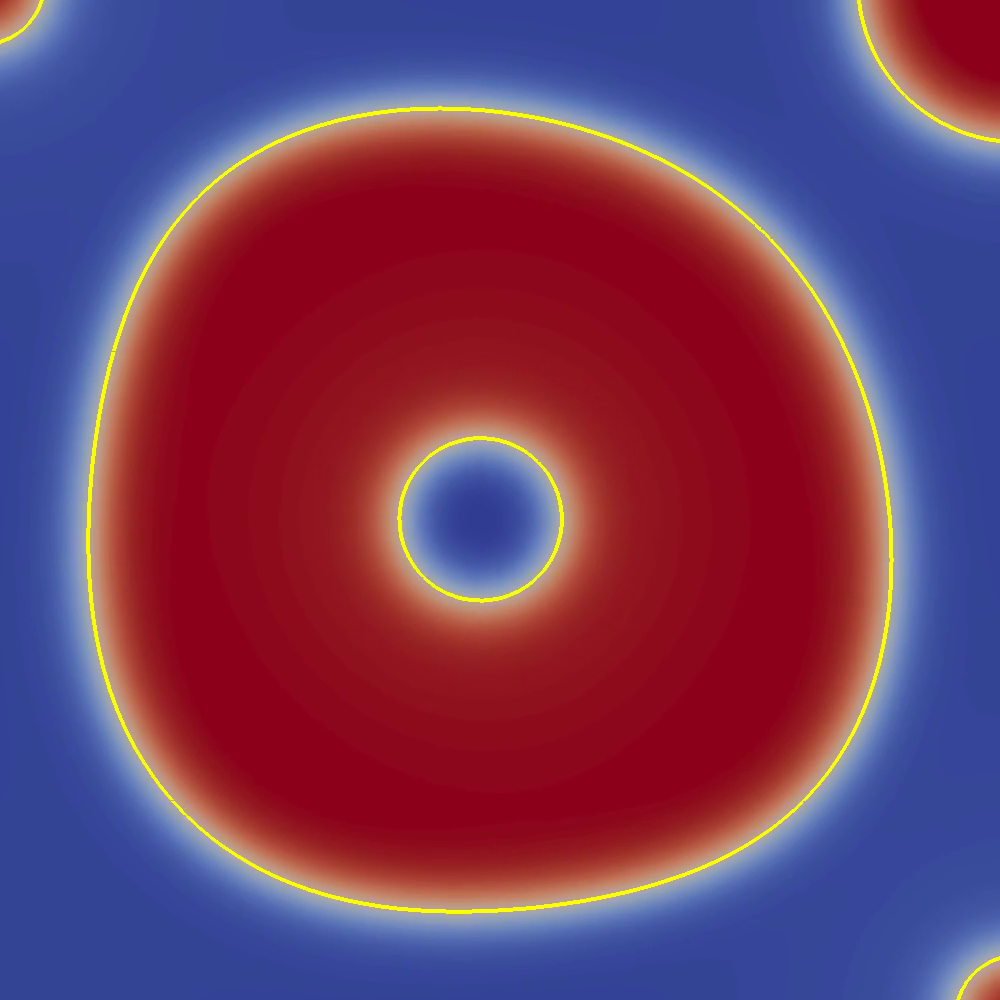}
          \includegraphics[width=0.2\textwidth]{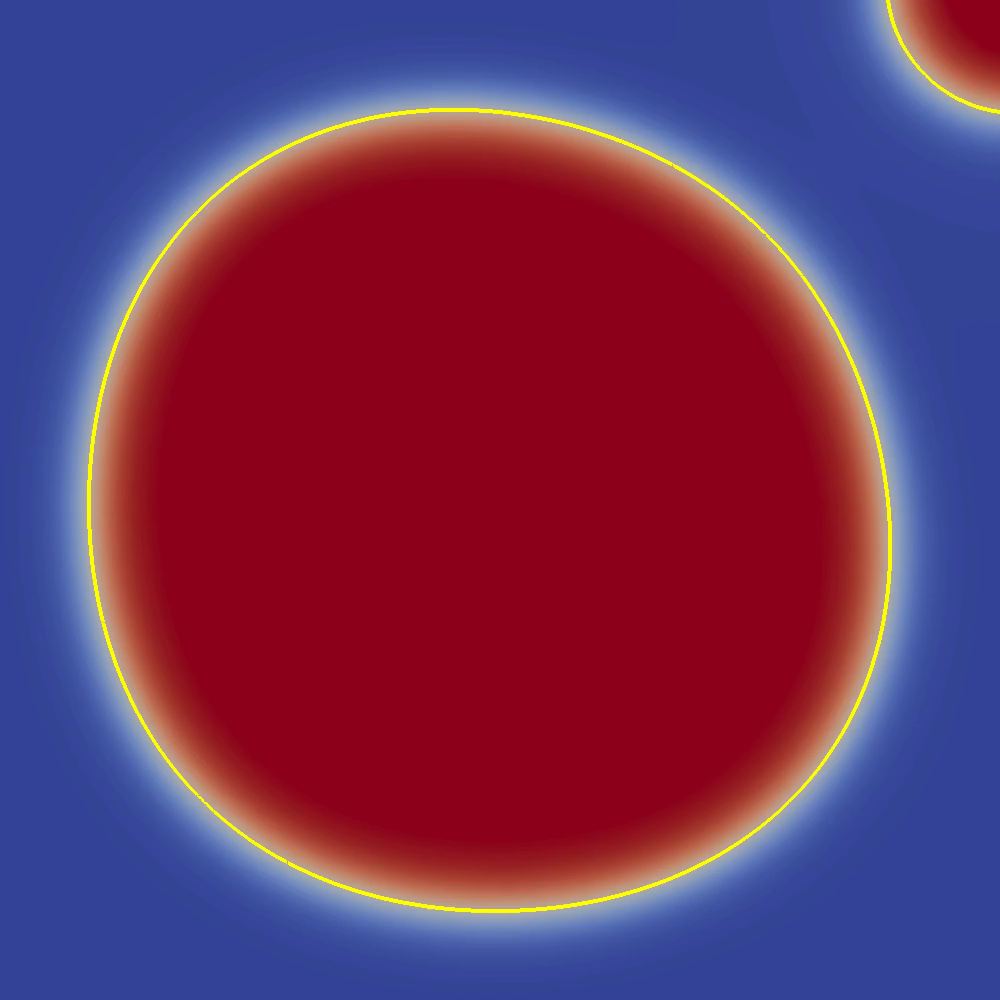}
          \includegraphics[width=0.2\textwidth]{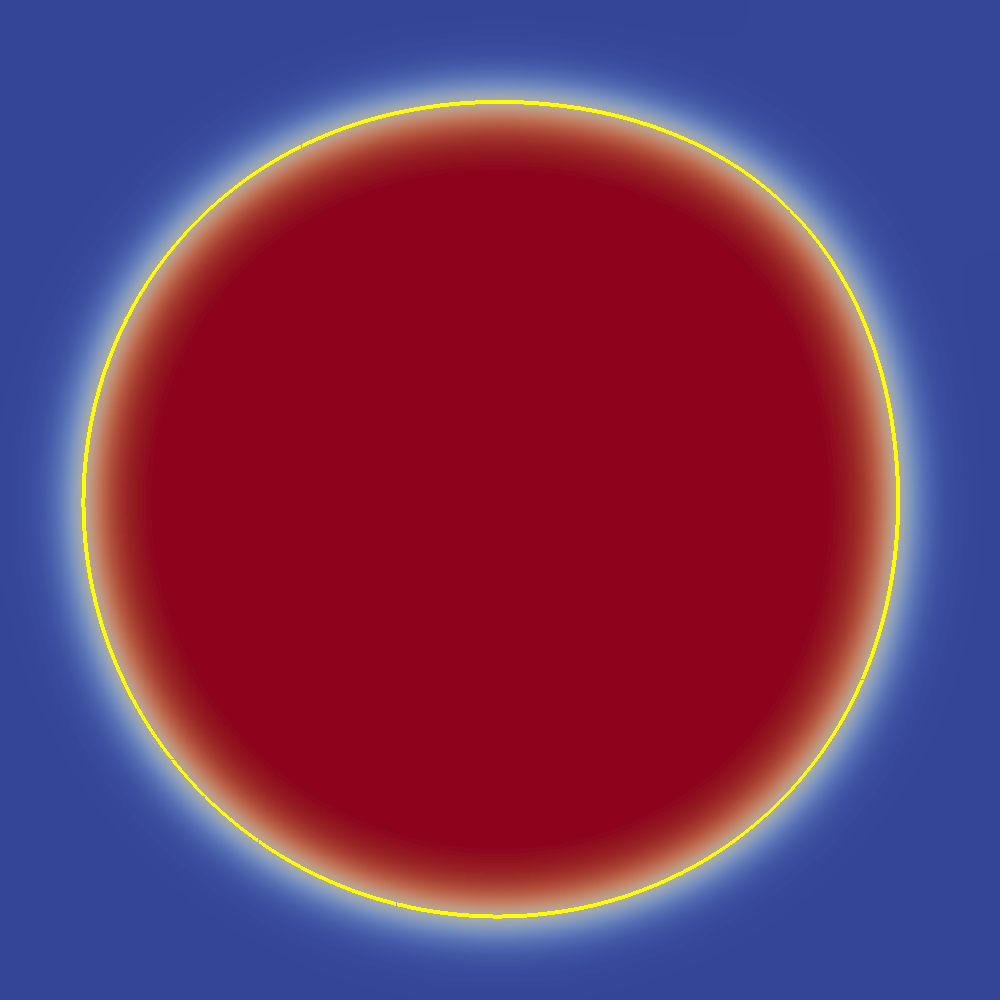}
      \caption{Some snapshots from the simulation of the Cahn-Hilliard
      problem. \label{figure: CH example}}
  \end{figure}
\end{example}

\begin{example}\label{ex: Willmore flow}
  In our final example we study the minimization of the Willmore energy of a surface, in the case where the surface is given by a graph over a flat domain $\Omega$. The corresponding Euler-Lagrange equations can be rewritten as a fourth order problem for a function 
  $u$ defined over $\Omega$. We use a second order two stage implicit Runge-Kutta method as suggested in \cite{deckelnick_c1finite_2015}. The resulting problem is a system of two nonlinear fourth order partial differential equations. The corresponding linearized problem is a system of equations for the two Runge-Kutta stages which is of the form \eqref{eqn: bilinear form cts}.

  As detailed for example in \cite{deckelnick_c1finite_2015}, the Willmore functional for the graph of a function ${u \in W^{2,\infty}(\Omega)}$ is given by 
  \begin{align*}
    W(u) = \frac{1}{2}\int_{\Omega} [ \ E(Du) : D^2  u \ ]^2 \, \mathrm{d}x.
  \end{align*}
  The function $u$ satisfies the same Dirichlet boundary conditions given in (\ref{eqn: strong form}).
  The function $E : \R^2 \rightarrow Sym(2)$, where $Sym(2)$ denotes the space of symmetric $2 \times 2$ matrices, is given by
  $$
    E_{ij}(w) := \frac{1}{(1+|w|^2)^{\frac{1}{4}}} \big( \delta_{ij} - \frac{w_i w_j}{1+|w|^2} \big)
    \quad \text{for } i,j=1,2, \, \text{ and } w \in \R^2.
  $$
  We initialize the gradient descent algorithm with $u(x,y)=\big(\sin(2\pi x)\sin(2\pi y)\big)^2$.

  \red{The time stepping scheme \cite{deckelnick_c1finite_2015} produces an approximation $u_h^n$ which can be constructed in the following way:
  given $u_h^n$ and time step $\tau_n$, solve the following coupled system for the two functions $u^{n,1}_h, u^{n,2}_h$,}
  \begin{align*}
    \sum_{j=0}^2 \frac{\hat \alpha_{1,j}}{\tau_n} 
    \int_{\Omega} \frac{u_{h}^{n,j} \varphi_h}{\sqrt{1+|\nabla (u_{h}^{n,1}) |^2}} \mathrm{d}x + \langle W^{\prime} ( u_{h}^{n,1} ),\varphi_h \rangle = 0 
    \\
    \sum_{j=0}^2 \frac{\hat \alpha_{2,j}}{\tau_n} 
    \int_{\Omega} \frac{u_{h}^{n,j} \varphi_h}{\sqrt{1+|\nabla (u_{h}^{n,2}) |^2}} \mathrm{d}x + \langle W^{\prime} ( u_{h}^{n,2} ),\varphi_h \rangle = 0 
  \end{align*}
  \red{setting $u_h^{n,0} = u_h^n$, where $\hat \alpha_{1,j} = (-2, \tfrac{3}{2}, \tfrac{1}{2})$ and ${\hat \alpha_{2,j} = (2, -\tfrac{9}{2}, \tfrac{5}{2})}$, for $j=0,1,2$. 
  Then set $u_h^{n+1} = u_h^{n,2}$.}
\end{example}

Figure~\ref{figure: Willmore example} shows the evolution of the surface and a graph showing the decay of the Willmore energy over time. Overall the results indicate the method is well suited for solving this type of problem.  

\begin{figure}[h] \centering
     \includegraphics[width=0.19\textwidth]{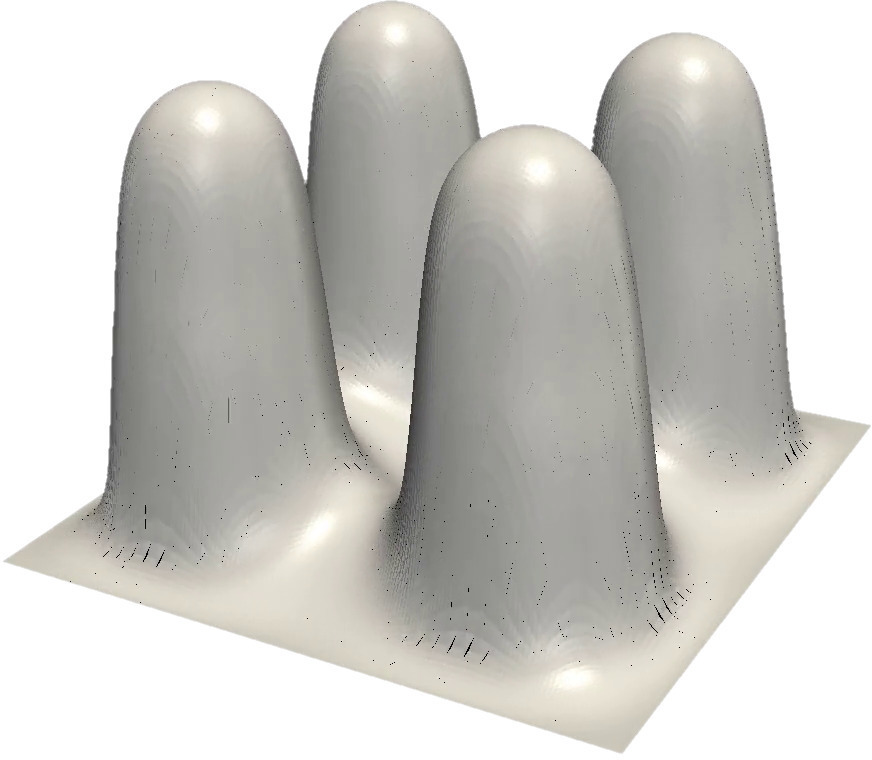}
     \includegraphics[width=0.19\textwidth]{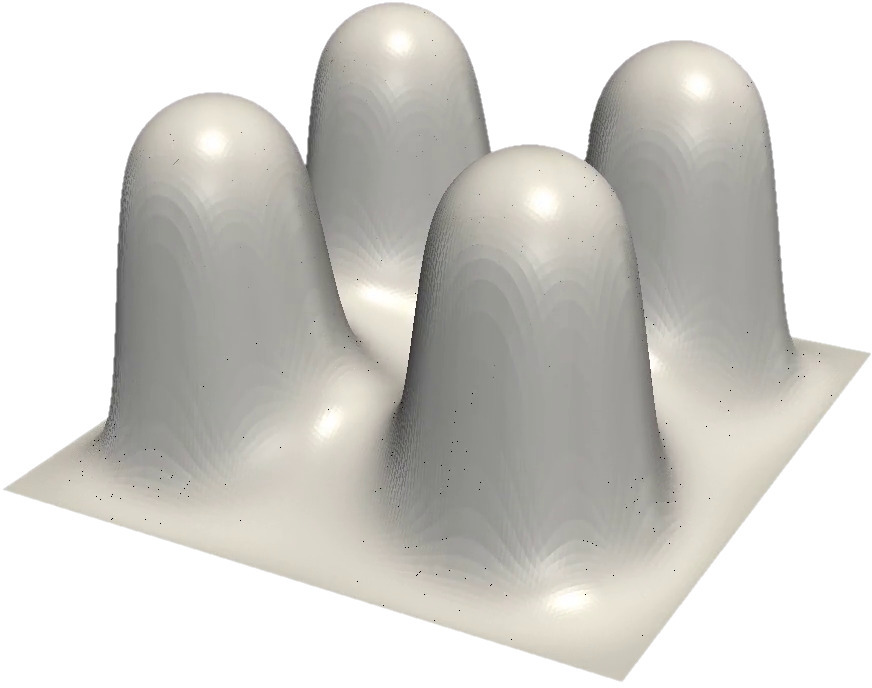}
     \includegraphics[width=0.19\textwidth]{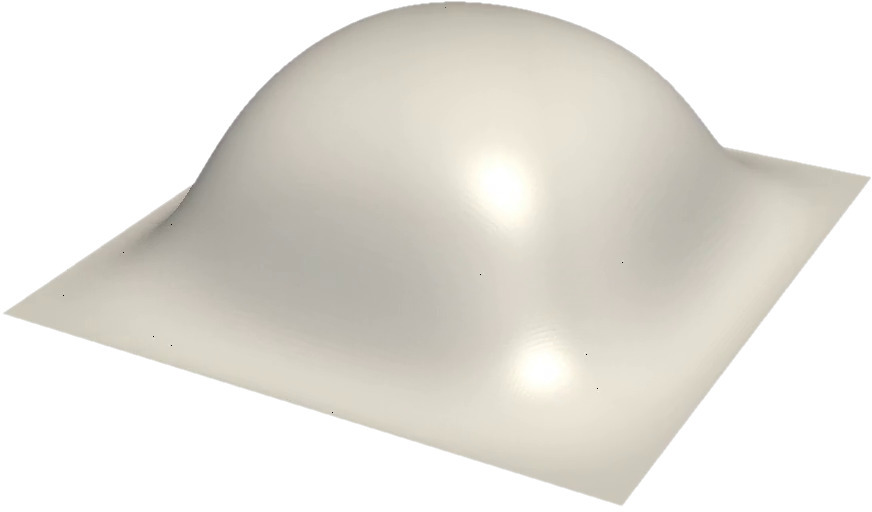}
     \includegraphics[width=0.19\textwidth]{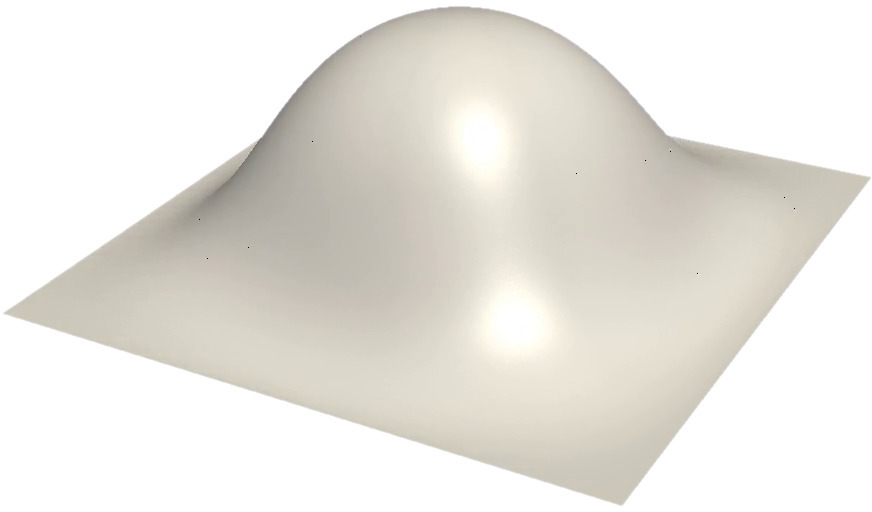} 
     \includegraphics[width=0.19\textwidth]{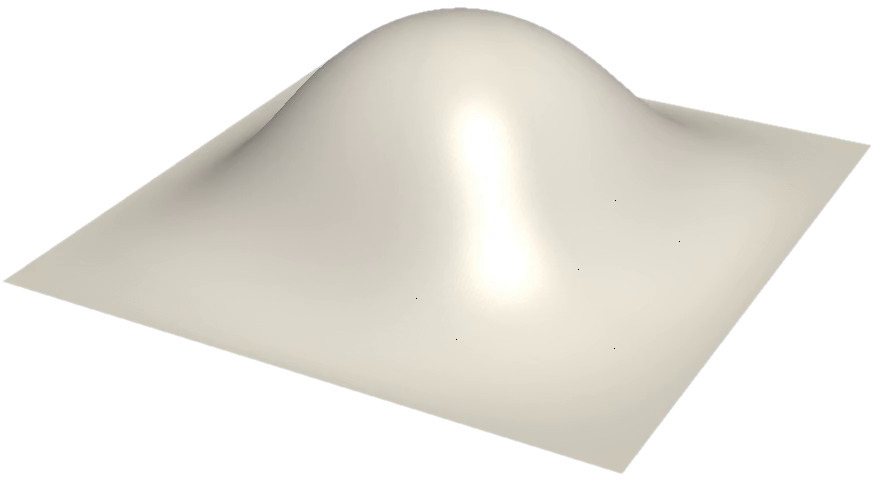} \\
     \includegraphics[width=0.5\textwidth]{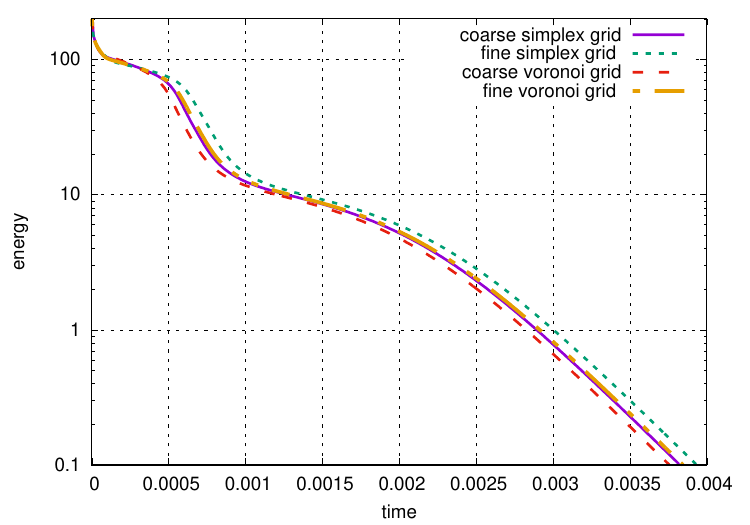}
     \caption{Some snapshots from the Willmore flow problem
     \red{(top row, left to right:
     $t=2.5\times 10^{-5},2.5\times 10^{-4}, 1.5\times 10^{-3}, 2.5\times 10^{-3}, 4.0\times 10^{-3}$)}.  The evolution of the Willmore energy computed on simplicial \red{and Voronoi grids} with approximate resolutions of ($40 \times 40$ and $80 \times 80$) is shown in the bottom figure. 
     \label{figure: Willmore example}}
\end{figure}

\section{Conclusion}\label{section: conclusions}

We have presented a general approach for constructing nonconforming VEM projection operators suitable for solving a wide range of fourth order problems. We have analysed the resulting methods and shown that the virtual element approximation in each of the considered spaces converges to the true solution in the energy norm with optimal convergence rates. We also introduced a novel modified nonconforming scheme for solving the fourth order perturbation problem which remained convergent as $\epsilon \rightarrow 0$. Unlike modifications seen in the literature, our change did not require an enlargement of the space and was obtained via an adjustment to the gradient projection. These results were verified with numerical experiments on a variety of polygonal meshes.

We introduced a new concept of describing the degrees of freedom via a \emph{dof tuple} which allowed us to easily encode a variety of different VEM spaces. We followed the approach taken in \cite{ahmad_equivalent_2013,cangiani_conforming_2015} and introduced \emph{dof compatible} projections which were constructed in a way that made them computable entirely from the available degrees of freedom. In particular, we gave examples of how dof compatible projections could be constructed based on constraint least squares problems. 
We were then able to construct the VEM spaces in a way that ensured that the value, gradient, and hessian projections were all identical to $L^2$ projections.

The ease with which our approach can extend to the application of nonlinear fourth order problems was also demonstrated with some additional numerical experiments. However, note that the theory behind the numerical results displayed is out of the scope of this paper and is future work. 

\section*{Acknowledgements}
The authors would like to thank the Isaac Newton Institute for Mathematical Sciences, Cambridge, for support and hospitality during the programme \emph{Geometry, compatibility and structure preservation in computational differential equations}, where work on this paper was carried out.

\bibliographystyle{acm}
\bibliography{main}

\end{document}